\newtheorem{theorem}{Theorem}[section]
\newtheorem{lemma}[theorem]{Lemma}
\newtheorem{proposition}[theorem]{Proposition}
\newtheorem{algorithm}[theorem]{Algorithm}
\newtheorem{corollary}[theorem]{Corollary}
\theoremstyle{definition}
\newtheorem{remark}[theorem]{Remark}
\DeclareMathOperator{\C}{\mathcal{C}}
\DeclareMathOperator{\GC}{\mathcal{G}(\mathcal{C})}
\DeclareMathOperator{\sizeGC}{|\mathcal{G}(\mathcal{C})|}
\DeclareMathOperator{\Cpt}{\mathcal{C}_\mathrm{pt}}
\DeclareMathOperator{\Cad}{\mathcal{C}_\mathrm{ad}}
\DeclareMathOperator{\Cadpt}{(\mathcal{C}_\mathrm{ad})_\mathrm{pt}}
\DeclareMathOperator{\FPdim}{FPdim}
\DeclareMathOperator{\Hom}{Hom}
\DeclareMathOperator{\End}{End}
\DeclareMathOperator{\Rep}{Rep}
\DeclareMathOperator{\rank}{rank}
\DeclareMathOperator{\di}{d}
\DeclareMathOperator{\Vect}{Vec}
\newcommand\VecGw{\operatorname{\textbf{Vec}_G^{\omega}}}
\newcommand{\deeqZ}[1]{(\Cad)_{\Z_#1}}
\DeclareMathOperator{\GCad}{\mathcal{G}(\Cad)}
\DeclareMathOperator{\sizeGCad}{|\mathcal{G}(\Cad)|}
\DeclareMathOperator{\Z}{\mathbb Z}
\title{Classification of low-rank odd-dimensional modular categories}
\begin{document}

\author[A. Czenky]{Agustina Czenky}
\address{Department of Mathematics, University of Oregon, Eugene, OR 97403, USA}
\email{aczenky@uoregon.edu}

\author[W. Gvozdjak]{William Gvozdjak}
\address{Bellevue High School, Bellevue, WA 98004, USA}
\email{w.gvoz@outlook.com}

\author[J. Plavnik]{Julia Plavnik}
\address{\parbox{\linewidth}{Department of Mathematics, Indiana University, Bloomington, IN 47405, USA\\ 
Fachbereich Mathematik, Universit\"at Hamburg, Hamburg 20146, Germany}}
\email{jplavnik@iu.edu}

\begin{abstract}
 We prove that any odd-dimensional modular category of rank at most $23$ is pointed. We also show that an odd-dimensional modular category of rank $25$ is either pointed, perfect, or equivalent to $\Rep(D^\omega(\mathbb Z_7\rtimes \mathbb Z_3))$.  Finally, we give partial classification results for modular categories of rank up to $73$.
\end{abstract}

\maketitle

\tableofcontents

\section{Introduction}
Modular tensor categories (MTCs) are fusion categories with additional braiding and ribbon structures  satisfying a non-degeneracy condition for the braiding, see~\cite{ENO1, T}. They are of interest for a variety of mathematical subjects, such as topological quantum field theory~\cite{T}, representation theory of quantum groups~\cite{BK}, von Neumann algebras~\cite{EK}, conformal field theory~\cite{MS} and vertex operator algebras~\cite{H}. Furthermore, MTCs appear in the study of topological phases of matter and topological quantum computation, as anyon systems are modeled by  unitary MTCs~\cite{R}.

The problem of classifying MTCs is an active area of research. In~\cite{BNRW1}, it was shown that there are finitely many MTCs of a fixed rank, up to equivalence. This result makes the classification of MTCs by rank a more feasible endeavor and many efforts have been made in this direction. A classification of unitary MTCs of rank at most 4 was presented in~\cite{RSW}. In~\cite{HR}, all MTCs of rank at most $5$ such that some object is not isomorphic to its dual (non-self-dual) were classified. Later on, all possible fusion rules for MTCs of rank 5 were determined in~\cite{BNRW2}, and were used to
describe their classification up to monoidal equivalence. Recently, a classification of rank 6 MTCs up to
modular data was given in~\cite{NRWW}.

Integral MTCs, that is, MTCs such that the Frobenius-Perron dimension of every simple object is an integer, are of particular relevance since they are in correspondence with the categories of representations of modular finite-dimensional semisimple quasi-Hopf algebras. Classification of integral MTCs has been approached from many directions, see for example~\cite{BR, BGHKNNPR, DLD, DN,DT}. Recently,~\cite{ABPP} reported the classification of all integral MTCs of rank at most $12$. 

An important class of integral MTCs are those of odd Frobenius-Perron dimension. As a consequence of~\cite[Corollary 8.2]{NS} and~\cite[Theorem 2.2]{HR}, this condition is equivalent to asking that no non-trivial object is self-dual (i.e., the category is maximally non-self-dual, or MNSD). It was proven in~\cite{BR} that odd-dimensional MTCs of rank at most $11$ are pointed, that is, all simple objects are invertible. At the same time, they exhibited an example of an odd-dimensional MTC of rank $25$ that is not pointed, given by the representation category $\Rep (D^\omega(\mathbb Z_7\rtimes \mathbb Z_3))$, and asked whether this is the smallest (in rank) example of a non-pointed odd-dimensional MTC. A partial answer to this question is given in~\cite{CP}, where it was shown that all odd-dimensional MTCs of rank $13$ or $15$ must be pointed, and all odd-dimensional MTCs of rank between $17$ and $23$ are either pointed or perfect, the latter meaning that the unit is the only invertible object.

In this work, we continue the study of low-rank MNSD MTCs, or equivalently, low-rank odd-dimensional MTCs. We show that odd-dimensional MTCs of ranks 17 to 23 are all pointed, which gives a positive answer to the question in~\cite{BR}. A proof of this statement can be found in Section~\ref{section: ranks 17 to 23}. Pointed MTCs are classified by pairs $(G, q)$, where $G$ is a finite abelian group, and $q:G \to \mathbf k^{\times}$ is a non-degenerate quadratic form on G, see~\cite[Example 8.13.5]{EGNO}. Thus our result completes the classification of odd-dimensional MTCs up to rank 23.

We also show that an odd-dimensional MTC of rank 25 is either pointed, perfect, or equivalent to the representation category $\Rep (D^\omega(\mathbb Z_7\rtimes \mathbb Z_3))$, which is the non-pointed example exhibited in~\cite{BR}. For higher ranks, we show partial results for classification of odd-dimensional MTCs of rank at most 73.

The following Theorem summarizes our main results.
\begin{theorem}\label{thm: summary}
    Let $\mathcal C$ be an odd-dimensional MTC.
    \begin{itemize}
        \item If $17\leq \rank(\mathcal C)\leq 23$, then $\mathcal C$ is pointed.
        
        \item If $\rank(\mathcal C)=25$, then $\mathcal C$ is pointed, perfect, or equivalent to $\Rep(D^\omega(\mathbb Z_7\rtimes \mathbb Z_3))$.
        
        \item If $\rank(\C)\in \{27, 29, 31,37,39,45,47,53,55,63\}$, then $\C$ is either pointed or perfect.
        
        \item If $\rank(\C)\in \{33, 61, 69, 71\}$, then $\C$ is either pointed, perfect, or has 3 invertible objects. 
        
        \item If $\rank(\C)=35$, then $\C$ is pointed, perfect, or  the modular subcategory of $\mathcal Z(\operatorname{Vec}_{H_3}^{\omega})$ with 9 invertible objects and 26 simple objects of dimension 3, where $H_3$ denotes the Heisenberg group of order $3^3$.
        
        \item If $\rank(\C)=41$, then $\C$ is pointed, perfect, or has 3 or 5 invertible objects. Moreover, if the latter case exists, then $\C$ must be equivalent to $\mathcal D^{\mathbb Z_5}$, where $\mathcal D$ is a categorification of the ring $R_{5,H}$ as defined in~\cite[Definition 1.3]{JL}, and $H$ is a finite abelian group of order $3^4$.
        
        \item If $\rank(\C)=43$, then $\C$ is pointed, perfect, or has $9$ invertible objects.
        
        \item If $\rank(\C)=49$, then $\C$ is pointed, perfect, or has $3$ or $5$ invertible objects. Additionally, if $\C$ has $5$ invertible objects and $7\nmid\FPdim(\C)$, then $\C\cong\Rep(D^\omega(\Z_{11}\rtimes\Z_5))$ with $\Z_{11}\rtimes\Z_5$ non-abelian and for some $3$-cocycle $\omega$.
        
        \item If $\rank(\C)\in \{51, 59, 67\}$, then $\C$ is either pointed, perfect or has 3 or 9 invertible objects.
        
        \item If $\rank(\C)\in \{57, 65\}$, then $\C$ is either pointed, perfect or has 3 or 5 invertible objects. In particular, the category $\Rep(D^\omega(\Z_{13}\rtimes\Z_3))$ is an example of an odd-dimensional MTC of rank $65$  with $3$ invertible objects. 
        
        \item If $\rank(\C)=73$, then $\C$ is either pointed, perfect or has 3, 5 or 7 invertible objects. 
    \end{itemize}
\end{theorem}

\begin{remark}
We note that if $\C$ is an odd-dimensional MTC that can be obtained as the Drinfeld center of another fusion category, then its dimension is a square and thus $\FPdim(\C)\equiv 1 \mod 8$. In particular, this is the case for MTCs of the form $\Rep(D^{\omega}G)$ for some finite group $G$. 

We have included in Theorem \ref{thm: summary} all examples of the form $\Rep(D^{\omega}G)$  (that we know of) that appear up to rank 73. There could be more examples in ranks 33, 41, 49, 57, 65 or 73, as it is difficult to know in general the rank of $\Rep(D^{\omega}G)$ in relation to $|G|$, see for example \cite{E}. Possibilities for such ranks have been computed up to $|G|=47$ in \cite[Figure 3]{GS}.
\end{remark}

Regarding the methods used in this paper, we started by proving several general results useful for classification of low rank MTCs, which can be found in Section~\ref{sec: general results}. We also developed an algorithm based on techniques from~\cite{CP}, which we improved with results from Section~\ref{sec: general results}, that allowed us to inspect potential arrays for the Frobenius-Perron dimensions of simple objects in an odd-dimensional MTC of fixed rank. Another important tool used in this work was the de-equivariantization of the adjoint subcategory by a Tannakian subcategory, which permitted in certain cases to work with an odd-dimensional MTC of lower dimension (and sometimes lower rank) than the original one.

There is a major conjecture in fusion categories that states that every weakly integral fusion category is weakly group-theoretical~\cite[Question 2]{ENO2}. As stated in~\cite{CP}, a perfect odd-dimensional MTC would yield a counter-example for said conjecture, and is thus conjectured that such a category cannot exist~\cite[Conjecture 1.1]{CP}. Consequently, if 
weakly integral MTCs are proven to be weakly group-theoretical, our results above would give a complete classification of odd-dimensional MTCs $\C$ such that 
\begin{itemize}
    \item $\rank(\C)\leq 47$ and $\rank(\C)\ne 33, 41, 43$, or
    \item $\rank(\C)=53, 55$ or 63.
\end{itemize}

The rest of this paper is organized as follows. A brief introduction to modular tensor categories and related constructions is given in Section~\ref{section: preliminaries}. The algorithm used for computing potential arrays of Frobenius-Perron dimensions is explained in Section~\ref{section: algorithm}, as well as general results useful for the classification of odd-dimensional MTCs by rank. Section~\ref{section: ranks 17 to 23} contains the classification of odd-dimensional MTCs of rank 17 to 23. Lastly, Sections~\ref{sec: rank 25},~\ref{sec: rank 27-31},~\ref{sec: ranks 33-49} and~\ref{sec: ranks 51-73} are dedicated to advancing the classification of MTCs of ranks 25, 27 to 31, 33 to 49, and 51 to 73, respectively.  

	\settocdepth{part}
\section*{Acknowledgements}
	\settocdepth{subsection}
We thank the MIT PRIMES program and its organizers Prof.~Pavel Etingof, Dr.~Slava Gerovitch, and Dr.~Tanya Khovanova for making this possible. The research of J.P. was partially supported by NSF grant DMS-2146392 and by Simons Foundation Award 889000 as part of the Simons Collaboration on Global Categorical Symmetries. J.P. would like to thank the hospitality
and excellent working conditions at the Department of Mathematics at Universit\"at Hamburg, where she has carried out part of this research as an Experienced Fellow of the Alexander von Humboldt Foundation.

\section{Preliminaries}\label{section: preliminaries}
We work over an algebraically closed field \textbf k of characteristic zero. We refer the reader to~\cite{ENO1, ENO2, EGNO} for the basic theory of fusion categories and braided fusion categories, and for terminology used throughout this paper. 

We denote by $\operatorname{Vec}$ the category of finite dimensional vector spaces over $\textbf k$, and by $\Rep(G)$ the category of finite dimensional representations of a finite group $G$.

\subsection{Fusion categories}
A \emph{fusion category} over \textbf k is a semisimple rigid tensor category over \textbf k with finitely many isomorphism classes of simple objects.

For the rest of this section, let $\mathcal C$ be a fusion category. We denote by \textbf 1 its identity object, and assume $\End_{\mathcal C}(\textbf 1)\cong \mathbf k$.

For an object $X\in \mathcal C$, we denote its dual by $X^*\in \mathcal C$. We say an object is $\emph{self-dual}$ if $X\cong X^*$, and \emph{non-self-dual} if $X\not\cong X^*$. The unit object \textbf 1 is always self-dual, and so we say that $\mathcal C$ is $\emph{maximally-non-self-dual}$ (MNSD) if the only self-dual simple object in $\mathcal C$ is $\mathbf 1$.

 We shall denote by $\mathcal{O(C)}$ the set of isomorphism classes of simple objects of $\mathcal C$. For $X, Y $ and $Z$ in $\mathcal O(\mathcal C)$, we denote the \emph{fusion coefficients} by $$N^X_{Y,Z} := \dim \Hom(Y\otimes Z, X).$$

A \emph{braiding} on  $\mathcal C$ is a natural isomorphism $$c_{X,Y}: X\otimes Y \to Y \otimes X$$ satisfying the so-called hexagonal diagrams, see~\cite[Definition 8.1.1]{EGNO}. We will say that $\mathcal C$ is \emph{braided} if it is equipped with a braiding.

\subsubsection{Frobenius-Perron dimension} Let $\mathcal C$ be a fusion category. We will denote by $\mathcal{K(C)}$ its Grothendieck ring, see e.g.~\cite[4.5]{EGNO}. For an object $X$ in $\mathcal C$ we will use the same notation for its class $X$  in $\mathcal{K(C)}$. We recall that there is a unique ring homomorphism $\FPdim : \mathcal{K(C)} \to \mathbb R$ called \emph{Frobenius-Perron dimension} such that $\FPdim(X) \geq 1$ for
any object $X \ne 0$, see~\cite[Proposition 3.3.4]{EGNO}. The \emph{Frobenius-Perron dimension} $\FPdim(\mathcal C)$ of $\mathcal C$ is defined as
\begin{align*}
	\FPdim(\mathcal C) = \sum\limits_{X\in \mathcal{O(C)} } \FPdim(X)^2.
\end{align*}

	We say that $\mathcal C$ is \emph{weakly integral} if $\FPdim(\mathcal C)$ is an integer, and \emph{integral} if $\FPdim(X)$ is an integer for all simple objects $X$.

If the non-invertible simple objects of $\C$ are $X_1, X_1^*, \dots, X_k, X_k^*$ and their Frobenius-Perron dimensions are $\di_1, \di_1, \dots, \di_k, \di_k$, then we say that the \emph{dimension array} of $\C$ is the array $[\di_1, \di_2, \dots, \di_k]$.

\subsubsection{Pointed fusion categories}
Let $\mathcal C$ be a fusion category. An object $X$ in $\mathcal C$ is said to be \emph{invertible} if its evaluation $X^* \otimes X \to \mathbf 1$ and coevaluation $\mathbf 1 \to X \otimes X^*$ maps are isomorphisms, see~\cite[Definition 2.10.1]{EGNO}. Equivalently, an object $X$ is invertible if $\FPdim(X)=1$. 

The unit  object $\mathbf 1$ is invertible. Generalizing the notion of a perfect group, which is a finite group with only one representation of dimension 1 (the trivial one), a fusion category $\mathcal C$ is said to be \emph{perfect} if the only invertible object is the unit. 

	A fusion category $\mathcal C$ is \emph{pointed} if all simple objects are invertible. Pointed fusion categories are classified by finite groups in the following way. Any pointed fusion category $\mathcal C$ is equivalent to the category of finite dimensional $G$-graded vector
spaces $\VecGw$, where $G$ is a finite group and $\omega$ is a 3-cocycle on $G$ with coefficients in $\textbf{k}^{\times}$ codifying the associativity constraint. 

We denote the group of isomorphism classes of invertible objects of $\mathcal C$  by $\mathcal{G(C)}$. 
The largest pointed subcategory of $\mathcal C$ will be denoted by $ \mathcal{C}_\mathrm{pt}$, that is the fusion subcategory of $\mathcal{C}$ generated by $\mathcal{G(C)}$. We will often identify the elements in $\mathcal{G}(\mathcal{C})$ with the invertible objects in $\mathcal{C}_\mathrm{pt}$.

A fusion category is said to be \emph{group-theoretical} if it is Morita equivalent to a pointed fusion category, see~\cite[Section 9.7]{EGNO}.

\subsubsection{The universal grading}\label{section: universal grading}

Let $G$ be a finite group. A \emph{$G$-grading} on a fusion category $\mathcal C$ is a decomposition $$\mathcal C = \displaystyle\bigoplus_{g\in G} \mathcal C_g,$$ 
where $\mathcal C_g$ is an abelian subcategory of $\mathcal C$ for all $g\in G$, 
such that the tensor product satifies $\otimes: \mathcal C_g\times \mathcal C_h \to \mathcal C_{gh}$, the unit object $\textbf 1$ is in $\mathcal C_{e}$, and the dualizing functor maps $\mathcal C_g$ to $\mathcal{C}_{g^{-1}}$, see~\cite[Section 4.14]{EGNO}. 

Such grading is said to be \emph{faithful} if $\mathcal C_g\ne 0$ for all $g \in G$. It was shown in~\cite[Proposition 8.20]{ENO2} that for a faithful grading all the components $\mathcal C_g$ have the same Frobenius-Perron dimension. Hence, $$\FPdim(\mathcal C)=|G|\FPdim(\mathcal C_e).$$

 By~\cite{GN}, any fusion category $\mathcal{C}$ admits a canonical faithful grading $$\mathcal{C}= \bigoplus\limits_{g \in U(\mathcal{C})} \mathcal{C}_g,$$  called the \emph{universal grading}. Its trivial component coincides with the \emph{adjoint subcategory}  $\mathcal{C}_{\mathrm{ad}}$ of $\mathcal C$, defined as the fusion subcategory generated by $X\otimes X^*$ for all $X \in \mathcal{O(C)}$.

 If $\mathcal{C}$ is equipped with a braiding, then $\mathcal{U(C)}$ is abelian. Moreover, if $\mathcal C$ is modular then $\mathcal{U(C)}$  is isomorphic to the group of (isomorphism classes of) invertibles $\mathcal{G(C)}$~\cite[Theorem 6.3]{GN}. We will use this fact repeatedly throughout this work.

 We denote the ranks of the components of the universal grading of a fusion category $\C$ using an array of ordered pairs $(n, r)$, indicating that there are $n$ components of rank $r$ each.

\subsubsection{Nilpotent, solvable, and weakly group-theoretical fusion categories}

 The \emph{upper central series} of  a fusion category $\mathcal{C}$ is the sequence defined recursively by
$$\mathcal{C}^{(0)}=\mathcal{C}   \text { and }\mathcal{C}^{(n)}=\big(\mathcal{C}^{(n-1)}\big)_{ad} 
\ \text{ for all } n\geq 1.$$
When the upper central series converges to $\operatorname{Vec}$, the category $\mathcal{C}$ is said to be \emph{nilpotent}, see~\cite{GN, ENO1}. Alternatively, a fusion category is nilpotent if there is a sequence of fusion subcategories $\mathcal C_0=\operatorname{Vec} \subset \dots \subset \mathcal C_n=\mathcal C$ and finite groups $G_1, \dots, G_n$ such that $\mathcal C_i$ is a $G_i$-extension of $\mathcal C_{i-1}$ for all $i$. If, moreover, the groups $G_i$ are cyclic the category is said to be \emph{cyclically nilpotent}.

As is the case for finite groups, fusion categories of Frobenius-Perron dimension a  prime power are known to be nilpotent, see~\cite[Theorem 8.28]{ENO1}. Also, when $\mathcal C$ is a nilpotent fusion category then $\FPdim(X)^2$ divides $\FPdim(\mathcal C_{\mathrm{ad}})$ for all $X\in \mathcal{O(C)}$~\cite{GN}. 

A fusion category $\mathcal{C}$ is \emph{weakly group-theoretical}, respectively \emph{solvable}, if it is Morita equivalent to a nilpotent fusion category, respectively to a cyclically nilpotent fusion category~\cite{ENO1}. Both of these Morita classes are closed under Deligne tensor product, Drinfeld centers, and fusion subcategories, see~\cite[Propositions 4.1 and 4.5]{ENO1}. The Morita class of weakly group-theoretical fusion categories is also closed under group extensions and equivariantizations. On the other hand, the class of solvable categories is closed under extensions and equivariantizations by solvable groups, and by taking component categories of quotient categories.

It was shown in ~\cite[Proposition 4.5]{ENO2} that if  $\mathcal C$ is a solvable braided fusion category, then $\mathcal C \cong \operatorname{Vec}$ or $\mathcal C$ has a non-trivial invertible object. It is also known that braided nilpotent fusion categories are solvable~\cite[Proposition 4.5]{ENO2}.

\subsection{Modular tensor categories}

Let $\mathcal{C}$ be a braided fusion category. A \emph{pivotal structure} on $\mathcal C$ is a natural isomorphism $\psi: \text{Id} \xrightarrow{\sim} (-)^{**}$, i.e., an isomorphism between the double dual and identity functors,  see~\cite{BW, EGNO}.  Associated to a pivotal structure we can define the left and right trace of a morphism $X\to X$, see e.g.~\cite[4.7]{EGNO}. The pivotal structure is called \emph{spherical} if for any such morphism its right trace equals its left trace.

Associated to a pivotal structure $\psi$ we get the notion of \emph{quantum dimension} $\dim(X)$ of an object $X\in \mathcal C$, as the scalar given by 
	\begin{equation}
	\begin{tikzcd}
\dim(X):=\operatorname{Tr}_X(\psi_X)\in \End(1)\cong \textbf k,
	\end{tikzcd}
	\end{equation}
 where $\operatorname{Tr}_X(\psi_X)$ denotes the left trace of $\psi_X$.
	The \emph{quantum dimension} of $\mathcal C$ is given by $\dim(\mathcal C)=\sum\limits_{X\in \mathcal{O(C)}} \di_X \di_{X^*}.$ When $\mathcal C$ is spherical, it satisfies that $\dim(X)=\dim(X^*)$ for all simple objects $X\in \mathcal C$. Note that in this case, $\dim(\mathcal C)= \sum\limits_{X\in \mathcal{O(C)}} \di_X^2$.

 A \emph{pre-modular} tensor category is a braided fusion category equipped with a spherical structure.

Equivalently, a pre-modular category is a braided fusion category endowed with a compatible ribbon structure. A \emph{ribbon structure} on $\mathcal{C}$ is a  natural isomorphism $\theta_X: X\xrightarrow{\sim} X$ for all $X\in \mathcal C$, satisfying
\begin{align}
	\theta_{X\otimes Y}=(\theta_X \otimes \theta_Y) \circ c_{Y,X} \circ c_{X,Y},
\end{align}
and $(\theta_X)^*=\theta_{X^*}$ for all $X ,Y \in \mathcal{C}$.

Let $\mathcal C$ be a premodular tensor category, with braiding $c_{X,Y}:X\otimes Y\xrightarrow{\sim}Y\otimes X$. The \emph{S-matrix} $S$ of $\mathcal C$ is defined by $S:= \left(s_{X,Y}\right)_{X,Y \in \mathcal{O(\mathcal{C})}}$, where $s_{X,Y}$ is the trace of $c_{Y,X}c_{X,Y}:X\otimes Y \to X\otimes Y$. A premodular tensor category $\mathcal C$ is said to be \emph{modular} if its $S$-matrix is non-degenerate.

We can obtain the entries of the $S$-matrix in terms of the twists, fusion rules, and quantum dimensions via the so-called \emph{balancing equation}
	\begin{align}\label{balancing}
	s_{X,Y}=\theta_X^{-1}\theta_Y^{-1} \sum\limits_{Z \in \mathcal{O(\mathcal{C})}} N_{XY}^Z \theta_Z \di_Z,
	\end{align}
for all $X,Y \in \mathcal{O(\mathcal{C})}$~\cite[Proposition 8.13.8]{EGNO}.

Note that, in this work, the terms ``premodular tensor category'' and ``modular tensor category'' imply semisimplicity, as we require them to be fusion categories. This is a slight abuse of terminology, which we adopt to be consistent with prior papers.

\subsubsection{Centralizers}
Let $\mathcal{C}$ be a braided fusion category and let $\mathcal{K}$ be a fusion subcategory of $\mathcal{C}$.  The \emph{M\"{u}ger centralizer} of $\mathcal{K}$ is the fusion subcategory $\mathcal K'$ of $\mathcal C$ consisting of all objects $Y$ in $\mathcal{C}$ such that
\begin{align}\label{centralizador}
	c_{Y,X}c_{X,Y}=\operatorname{id}_{X\otimes Y}, \ \ \text{for all}\ X \in \mathcal{K},
\end{align} 
see~\cite{M}.

We say $\mathcal C$ is \emph{symmetric} if $\mathcal C'=\mathcal C$. A symmetric fusion category is called \emph{Tannakian} if it is equivalent as a braided fusion category to the category $\Rep(G)$ for some finite group $G$, with braiding given by the usual flip of vector spaces. It is known that a symmetric fusion category of odd Frobenius-Perron dimension is Tannakian~\cite[Corollary 2.50]{DGNO2}. We will use this fact repeatedly throughout this work.

A necessary and sufficient condition for $\mathcal{K}$ to be modular is $\mathcal{K} \cap \mathcal{K}' = \ \text{Vec}$. This tells us that $\mathcal C$ is modular if and only if $\mathcal C'=\text{Vec}.$

For a modular tensor category $\mathcal{C}$, any fusion subcategory $\mathcal K$ satisfies $\mathcal{K}=\mathcal{K}''$ and $\dim(\mathcal{K})\dim(\mathcal{K}')=\dim(\mathcal{C})$~\cite[Theorem 3.2]{M}; moreover, $\mathcal{C}_\mathrm{pt}=(\mathcal{C}_{\mathrm{ad}})'$ and  $(\mathcal{C}_\mathrm{pt})'=\mathcal{C}_{\mathrm{ad}}$~\cite[Corollary 6.9]{GN}.

\begin{remark}\cite[Remark 2.2]{CP}
	If $\mathcal{C}$ is a braided fusion category, then $(\mathcal{C}_{\mathrm{ad}})_{\mathrm{pt}}$  is symmetric. 
\end{remark}

\subsubsection{Equivariantization and de-equivariantization}\label{section: equivariantization}
Let $\C$ be a fusion category with an action of a finite group $G$, see~\cite[Definition 4.15.1]{EGNO}.
Following~\cite[Section 2.7]{EGNO} one can construct a fusion category $\mathcal C^G$ of $G$-equivariant objects in $\mathcal C$. We call $\mathcal C^G$ the \emph{equivariantization} of $\mathcal C$ by $G$. If  the action of $G$ on $\mathcal C$ is  braided, see~\cite[Definition 8.23.8]{EGNO}, then $\mathcal C^G$ is also braided, with braiding induced from $\mathcal C$.  Moreover, $\Rep(G)$ is identified with a Tannakian subcategory of $\mathcal C^G$ via the canonical embedding $\Rep(G) \hookrightarrow \mathcal C^G$.

Conversely, suppose that $\mathcal C$ is a braided fusion category with a Tannkian subcategory $\Rep(G)$. Then we can consider the \emph{de-equivariantization} $\mathcal C_G$ of $\mathcal C$ with
respect to $\Rep(G)$, see~\cite[Theorem 8.23.3]{EGNO} for the construction. The category $\mathcal C_G$ is a braided fusion category, and there is an equivalence of braided fusion categories $\mathcal C\cong (\mathcal C_G)^G$.

Note that dimensions are well-behaved under equivariantization and de-equivariantization, see~\cite[Proposition 4.26]{DGNO2}. In fact,  we have 
\begin{align*}
   && \FPdim(\mathcal C^G)=|G|\cdot \FPdim(\mathcal C) &&\text{and} &&\FPdim(\mathcal C_G)=\frac{1}{|G|}\cdot \FPdim(\mathcal C).
\end{align*}

\begin{remark}\cite[Remark 2.3]{ENO2}
	Let $\mathcal C$ be a braided fusion category and let $\mathcal E \subseteq \mathcal C'$ be
	a Tannakian subcategory. Then the de-equivariantization of $\mathcal C$ by $\mathcal E$ is non-degenerate if and only if $\mathcal E = \mathcal C'$.
\end{remark}

Let $\mathcal C$ be an odd-dimensional modular tensor category. By~\cite[Remark 2.2]{CP}, we have that  $(\mathcal{C}_{\mathrm{ad}})_{\mathrm{pt}}$ is symmetric and thus Tannakian~\cite[Corollary 2.50]{DGNO2}. Let $G$ be a finite group such that $(\mathcal{C}_{\mathrm{ad}})_{\mathrm{pt}}\cong \Rep(G)$, and recall that the M\"uger center of $\mathcal C_{\mathrm{ad}}$ is $\mathcal C_{\mathrm{pt}}\cap \mathcal C_{\mathrm{ad}}=(\mathcal{C}_{\mathrm{ad}})_{\mathrm{pt}}$. Then by the remark above, the de-equivariantization $(\mathcal C_{\mathrm{ad}})_G$ is also modular. 
We will use these facts repeatedly throughout this work.

\section{Algorithm and general results}\label{section: algorithm}

\subsection{Basic algorithm}\label{section: basic algorithm}

We begin by presenting a recursive algorithm to generate potential lists of dimensions of the simple objects of an odd-dimensional integral MTC, given the rank and number of invertible simple objects of that MTC. The code for the implementation of this algorithm can be found in Appendix~\ref{section: algorithm code} in the function \texttt{basic\_algorithm}.

Consider an odd-dimensional MTC $\C$ of odd rank $n=2k+s$ with $s$ invertible objects. Then let the non-invertible objects of $\C$ be $X_1, X_1^*, \dots, X_k, X_k^*$ and their Frobenius-Perron dimensions be $\di_1, \di_1, \dots, \di_k, \di_k$. Without loss of generality, suppose that $\di_1\geq \di_2\geq\dots\geq \di_k>1$.

Note that by~\cite[Theorem 2.11]{ENO2}, for each $i$, there exists a positive integer $m_i$ such that $\FPdim(\mathcal{C})=m_i\di_i^2$. Also, note that each $m_i$ must be odd since $\FPdim(\mathcal{C})$ is odd.

We prove the following lemma, which allows us to initiate the recursion of the algorithm.

\begin{lemma}\label{lemma: recursion initiation}
    The value $m_1$ must satisfy the following two conditions:
    \begin{itemize}
        \item $m_1\equiv \rank(\C)\pmod{8}$, and
        \item $s\leq m_1\leq 2k+\frac{s}{9}$.
    \end{itemize}
\end{lemma}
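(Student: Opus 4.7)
The plan has two separate parts: the congruence mod $8$ and the two-sided bound. First I record the one preliminary observation I will use throughout: since $\FPdim(\C)$ is an odd integer and each $\di_i^2$ divides $\FPdim(\C)$, every $\di_i$ is an odd positive integer, and non-invertibility forces $\di_i\ge 3$. In particular, $\di_i^2\equiv 1\pmod 8$ for every $i$.

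For the congruence $m_1\equiv\rank(\C)\pmod 8$, I would start from the dimension decomposition
\[
\FPdim(\C)=s+2\sum_{i=1}^{k}\di_i^2,
\]
using that the non-invertibles come in dual pairs $X_i,X_i^*$ of the same dimension. Reducing modulo $8$ and using $\di_i^2\equiv 1\pmod 8$ gives $\FPdim(\C)\equiv s+2k=\rank(\C)\pmod 8$. Since $\FPdim(\C)=m_1\di_1^2$ and $\di_1^2\equiv 1\pmod 8$, we may cancel $\di_1^2$ modulo $8$ to conclude $m_1\equiv \rank(\C)\pmod 8$.

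For the upper bound $m_1\le 2k+s/9$, I would bound the sum in the identity above by $\sum\di_i^2\le k\di_1^2$, yielding $m_1\di_1^2\le s+2k\di_1^2$, i.e.\ $m_1\le s/\di_1^2+2k$, and then use $\di_1\ge 3$. For the lower bound $s\le m_1$, I would invoke the universal grading $\C=\bigoplus_{g\in U(\C)}\C_g$, whose trivial component is $\C_{\mathrm{ad}}$. Since $\C$ is modular, $U(\C)\cong\mathcal G(\C)$ by Gelaki–Nikshych, so $|U(\C)|=s$ and hence $\FPdim(\C_{\mathrm{ad}})=\FPdim(\C)/s=m_1\di_1^2/s$. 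Because every graded component has the same Frobenius–Perron dimension and $X_1$ lies in some component $\C_g$, we have $\FPdim(\C_{\mathrm{ad}})=\FPdim(\C_g)\ge \di_1^2$. Substituting the expression for $\FPdim(\C_{\mathrm{ad}})$ and cancelling $\di_1^2$ gives $m_1\ge s$.

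I do not expect any real obstacle: the only mild points are remembering that every odd square is $\equiv 1\pmod 8$ (so the congruence survives after cancelling $\di_1^2$) and using the equality $|U(\C)|=|\mathcal G(\C)|=s$ in the modular setting to identify $\FPdim(\C_{\mathrm{ad}})$ with $\FPdim(\C)/s$. Once both facts are in place, the two inequalities and the congruence follow from the single identity $\FPdim(\C)=s+2\sum\di_i^2=m_1\di_1^2$.
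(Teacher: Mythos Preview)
Your proof is correct and follows essentially the same approach as the paper: the congruence via $\FPdim(\C)\equiv\rank(\C)\pmod 8$ together with $\di_1^2\equiv 1\pmod 8$, the upper bound from $\sum\di_i^2\le k\di_1^2$ and $\di_1\ge 3$, and the lower bound from $\di_1^2\le\FPdim(\C_g)=\FPdim(\C)/s$. The only cosmetic difference is that you re-derive the congruence $\FPdim(\C)\equiv\rank(\C)\pmod 8$ directly from the decomposition, whereas the paper cites it as \cite[Lemma~5.4]{CP}.
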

\begin{proof}
    By~\cite[Lemma 5.4]{CP}, $\FPdim(\mathcal{C})\equiv\rank(\mathcal{C})\pmod{8}$, so $m_1\di_1^2\equiv \rank(\C)\pmod{8}$. Hence, as $\di_1$ is odd, $m_1\equiv \rank(\C)\pmod{8}$, showing the first condition.

    Additionally, let $\C_g$ be the component in the universal grading of $\C$ that contains the simple object of dimension $\di_1$. We have $m_1=\frac{\FPdim(\C)}{\di_1^2}\geq\frac{\FPdim(\C)}{\FPdim(\C_g)}=\sizeGC=s$.
    
    Finally, we know that
    \[m_1\di_1^2=\FPdim(\C)=\sizeGC+2\di_1^2+\dots+2\di_k^2\leq s+2k\di_1^2,\]
    so $m_1\leq \frac{s}{\di_1^2}+2k$. As $\di_1\geq 3$, this implies that $m_1\leq 2k+\frac{s}{9}$.
\end{proof}

The next lemma will allow us to recursively generate lists of dimensions.

\begin{lemma}\label{lemma: algorithm recursion}
    Suppose that
    \begin{equation}\label{eq:dimeq}
        c_i\di_i^2=s+2\di_{i+1}^2+2\di_{i+2}^2+\dots+2\di_k^2
    \end{equation}
    for some $i$ with $c_i$ a positive rational number. Then $m_{i+1}$ satisfies
    \begin{itemize}
        \item $m_{i+1}\in\left[m_i, \frac{sm_i}{tc_i}+\frac{2(k-i)m_i}{c_i}\right]$,
        \item $\left(\frac{c_im_{i+1}}{m_i}-2\right)\di_{i+1}^2=s+2\di_{i+2}^2+\dots+2\di_k^2$, and
        \item $\frac{c_im_{i+1}}{m_i}-2>0$,
    \end{itemize}
    where $t=225$ if $s=1$ and $t=9$ otherwise.
\end{lemma}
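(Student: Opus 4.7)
The plan is to derive all three conclusions by simple algebraic manipulation of equation~\eqref{eq:dimeq}, using the defining relation $\FPdim(\mathcal{C}) = m_j d_j^2$ to convert between dimensions $d_j$ and integers $m_j$, and then using the ordering $d_{i+1} \geq d_{i+2} \geq \ldots \geq d_k$ together with a lower bound on $d_{i+1}$ to produce the numerical upper bound on $m_{i+1}$.

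First I would prove the equation in the second bullet. Subtracting $2d_{i+1}^2$ from both sides of~\eqref{eq:dimeq} yields
$$c_i d_i^2 - 2 d_{i+1}^2 = s + 2 d_{i+2}^2 + \ldots + 2 d_k^2.$$
Writing $d_j^2 = \FPdim(\mathcal{C})/m_j$ for $j \in \{i, i+1\}$ gives $d_i^2 / d_{i+1}^2 = m_{i+1}/m_i$, so factoring $d_{i+1}^2$ out of the left-hand side produces
$$\left(\frac{c_i m_{i+1}}{m_i} - 2\right) d_{i+1}^2 = s + 2 d_{i+2}^2 + \ldots + 2 d_k^2,$$
as claimed. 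Positivity of $c_i m_{i+1}/m_i - 2$ is then immediate, since the right-hand side is at least $s \geq 1 > 0$ and $d_{i+1}^2 > 0$. The lower bound $m_{i+1} \geq m_i$ follows at once from $d_{i+1} \leq d_i$ combined with $m_j = \FPdim(\mathcal{C})/d_j^2$.

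For the upper bound on $m_{i+1}$, I would bound the right-hand side of the equation above using $d_j \leq d_{i+1}$ for all $j \geq i+2$, of which there are $k - i - 1$ terms, obtaining
$$\left(\frac{c_i m_{i+1}}{m_i} - 2\right) d_{i+1}^2 \leq s + 2(k - i - 1) d_{i+1}^2.$$
Dividing by $d_{i+1}^2$, rearranging, and multiplying by $m_i/c_i$ yields
$$m_{i+1} \leq \frac{s m_i}{c_i d_{i+1}^2} + \frac{2(k - i) m_i}{c_i}.$$

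The main obstacle is replacing the factor $d_{i+1}^2$ in the denominator by the claimed constant $t$. When $s \geq 3$, and hence $t = 9$, this is elementary: since $\FPdim(\mathcal{C})$ is odd, every simple FP-dimension is an odd integer, and since $X_{i+1}$ is non-invertible we have $d_{i+1} \geq 3$, giving $d_{i+1}^2 \geq 9$. When $s = 1$, i.e., when $\mathcal{C}$ is perfect, the required bound $d_{i+1}^2 \geq 225$ is sharper and demands ruling out each of the odd dimensions $3, 5, 7, 9, 11, 13$ among simple objects of a perfect odd-dimensional MTC; this is the delicate step, and I would invoke the corresponding structural result established among the general results of Section~\ref{section: algorithm} to justify it.
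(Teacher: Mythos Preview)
Your proposal is correct and follows essentially the same route as the paper's proof. The one clarification: for $s=1$ the bound $d_{i+1}\geq 15$ is not drawn from Section~\ref{section: algorithm} but from \cite[Corollary~7.2]{ENO2}, which asserts that a perfect integral fusion category has no nontrivial simple object of prime-power dimension; since $3,5,7,9,11,13$ are all prime powers, $d_{i+1}\geq 15$ follows immediately.
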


\begin{proof}
     We know that the right-hand side of equation~\eqref{eq:dimeq} is at most $s+2(k-i)\di_{i+1}^2$, which implies that
    \begin{equation}\label{eq:dimineq}
        c_i\di_i^2\leq s+2(k-i)\di_{i+1}^2.
    \end{equation}
    
    Since $m_{i+1}\di_{i+1}^2=\FPdim(\mathcal{C})=m_i\di_i^2$, we can substitute $\di_i^2=\frac{m_{i+1}}{m_i}\di_{i+1}^2$ into inequality~\eqref{eq:dimineq}, giving us
    \begin{align*}
        \frac{c_im_{i+1}}{m_i}\di_{i+1}^2&\leq s+2(k-i)\di_{i+1}^2 \\
        m_{i+1}&\leq \frac{sm_i}{c_i\di_{i+1}^2}+\frac{2(k-i)m_i}{c_i}.
    \end{align*}
    
    Additionally, if $s=1$, by~\cite[Corollary 7.2]{ENO2}, as $\mathcal{C}$ is perfect, $\mathcal{C}$ cannot have a simple object whose dimension is a power of a prime. As a result, we must have $\di_{i+1}\geq 15$, so $\di_{i+1}^2\geq 225$. If instead $s\neq 1$, we must have $\di_{i+1}\geq 3$, so $\di_{i+1}^2\geq 9$. Clearly, $m_{i+1}\geq m_i$ (as $\di_{i+1}\leq\di_i$). We therefore can bound the value of $m_{i+1}$:
    \begin{equation}\label{eq:newmrange}
        m_{i+1}\in\left[m_i, \frac{sm_i}{tc_i}+\frac{2(k-i)m_i}{c_i}\right],
    \end{equation}
    where $t=225$ if $s=1$ and $t=9$ otherwise.
    
    Also, by substituting $\di_i^2=\frac{m_{i+1}}{m_i}\di_{i+1}^2$ into equation~\eqref{eq:dimeq}, we know
    \begin{align}
        \frac{c_im_{i+1}}{m_i}\di_{i+1}^2&=s+2\di_{i+1}^2+2\di_{i+2}^2+\dots+2\di_k^2 \nonumber \\
        \left(\frac{c_im_{i+1}}{m_i}-2\right)\di_{i+1}^2&=s+2\di_{i+2}^2+\dots+2\di_k^2 \label{eq:newdimeq}.
    \end{align}

    Clearly, the right-hand side is positive, so we must have $\frac{c_im_{i+1}}{m_i}-2>0$.
\end{proof}

Now, for each $i$, let $u_i^2$ be the largest perfect square that divides $m_i$. The following corollary reduces the number of cases that we must check each time we recurse in our algorithm.

\begin{corollary}\label{corollary: u_i recursion}
    Suppose that
    \begin{equation}\label{eq: dimeq u_i}
        c_i\di_i^2=s+2\di_{i+1}^2+2\di_{i+2}^2+\dots+2\di_k^2
    \end{equation}
    for some $i$ with $c_i$ a positive rational number. Then $u_{i+1}$ satisfies
    \begin{itemize}
        \item $u_{i+1}^2\in\left[u_i^2, \frac{su_i^2}{tc_i}+\frac{2(k-i)u_i^2}{c_i}\right]$,
        \item $\left(\frac{c_iu_{i+1}^2}{u_i^2}-2\right)\di_{i+1}^2=s+2\di_{i+2}^2+\dots+2\di_k^2$, and
        \item $\frac{c_iu_{i+1}^2}{u_i^2}-2>0$,
    \end{itemize}
    where $t=225$ if $s=1$ and $t=9$ otherwise.
\end{corollary}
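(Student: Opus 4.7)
The plan is to reduce Corollary~\ref{corollary: u_i recursion} directly to Lemma~\ref{lemma: algorithm recursion} by establishing the key identity
\[
\frac{u_{i+1}^2}{u_i^2} \;=\; \frac{m_{i+1}}{m_i}.
\]
Once this identity is in hand, each of the three bullet points of the Corollary will follow by substituting $u_{i+1}^2/u_i^2$ for $m_{i+1}/m_i$ in the corresponding bullet of Lemma~\ref{lemma: algorithm recursion}.

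To prove the identity, I would begin from the observation that $\FPdim(\mathcal{C}) = m_j\di_j^2$ holds for every index $j$, so in particular $m_i\di_i^2 = m_{i+1}\di_{i+1}^2$. Writing $m_j = u_j^2 v_j$ with $v_j$ squarefree for $j=i$ and $j=i+1$, this equality rearranges to
\[
(u_i\di_i)^2\, v_i \;=\; (u_{i+1}\di_{i+1})^2\, v_{i+1}.
\]
Since the squarefree part of a positive integer is unique, comparing both sides forces $v_i = v_{i+1}$. Denoting this common value by $v$, cancellation of $v$ gives $(u_i\di_i)^2 = (u_{i+1}\di_{i+1})^2$, which yields $m_{i+1}/m_i = u_{i+1}^2/u_i^2$, as needed. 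In words, the algorithmic recursion preserves the squarefree part of $m_j$, and all variation occurs in the square part $u_j^2$.

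With this identity, the three bullets of the Corollary are direct translations of the corresponding bullets in Lemma~\ref{lemma: algorithm recursion}. The range bound $m_{i+1}\in[m_i, sm_i/(tc_i) + 2(k-i)m_i/c_i]$ divides through by the common factor $v$ to give $u_{i+1}^2 \in [u_i^2, su_i^2/(tc_i) + 2(k-i)u_i^2/c_i]$; the equation $(c_i m_{i+1}/m_i - 2)\di_{i+1}^2 = s + 2\di_{i+2}^2 + \dots + 2\di_k^2$ becomes the corresponding equation in $u_{i+1}^2/u_i^2$; and the positivity of $c_i m_{i+1}/m_i - 2$ transfers verbatim. I do not anticipate any serious obstacle: the entire content of the Corollary is captured in the invariance of the squarefree part of $m_j$ under the recursion, which is an immediate consequence of $\FPdim(\mathcal{C})$ being a fixed positive integer divisible by each $\di_j^2$. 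The remainder is routine bookkeeping.
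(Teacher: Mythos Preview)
Your proposal is correct and follows essentially the same approach as the paper: both arguments hinge on the observation that the squarefree part of $m_j$ is invariant in $j$ (the paper calls it $w$, you call it $v$), which follows from $m_i\di_i^2 = m_{i+1}\di_{i+1}^2 = \FPdim(\mathcal{C})$, and then both substitute $u_{i+1}^2/u_i^2$ for $m_{i+1}/m_i$ in the conclusions of Lemma~\ref{lemma: algorithm recursion}.
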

\begin{proof}
    Let $w=\frac{m_i}{u_i^2}$ (hence, $m_i=u_i^2w$). Since $w$ is squarefree, the equation $m_{i+1}\di_{i+1}^2=m_i\di_i^2$ implies that $m_{i+1}=u_{i+1}^2w$ for an odd integer $u_{i+1}$ ($u_{i+1}$ must be odd as $\FPdim(\mathcal{C})=u_{i+1}^2w\di_{i+1}^2$ is odd). As a result, checking all the values of $m_{i+1}$ in the range~\eqref{eq:newmrange} is equivalent to checking odd values of $u_{i+1}$ such that $u_{i+1}^2$ is in the range
    \[u_{i+1}^2\in\left[u_i^2, \frac{su_i^2}{tc_i}+\frac{2(k-i)u_i^2}{c_i}\right]\]
    for whether they satisfy the equation
    \[\left(\frac{c_iu_{i+1}^2}{u_i^2}-2\right)\di_{i+1}^2=s+2\di_{i+2}^2+\dots+2\di_k^2.\]
\end{proof}

\begin{algorithm}\label{alg: dim generation}
    Given odd integers $n$ and $s$, in order to determine whether an odd-dimensional MTC $\C$ can exist with $\rank(\C)=n$ and $\sizeGC=s$, perform the following steps:
    \begin{enumerate}
        \item Initialize the variable $k$ to $\frac{n-s}{2}$, and $t$ to $225$ if $s=1$ and to $9$ otherwise. 
        \item For every integer value $m_1$ satisfying the conditions in Lemma~\ref{lemma: recursion initiation}, set $i=1$ and $c_1=m_1-2$. Then recursively perform the following steps.
        \item If $i<k$, for every odd integer value $u_{i+1}$ that satisfies the conditions of Corollary~\ref{corollary: u_i recursion}, set $c_{i+1}$ to $\frac{c_iu_{i+1}^2}{u_i^2}-2$, increment $i$ and repeat this step. 
        \item If $i=k$ (i.e., the recursion is complete), check whether $\frac{s}{c_k}$ is a perfect square. 
        \begin{itemize}
            \item If yes, we know $\di_k^2=\frac{s}{c_k}$. Calculate the rest of the dimensions of the simple objects of $\C$, using the previously computed values of $u_1, \dots, u_k$, and verify whether this is a valid solution.
            \item If no, backtrack to the prior value of $i$.
        \end{itemize}
    \end{enumerate}
\end{algorithm}

\begin{proof}
    We know that
    \[\FPdim(\mathcal{C})=m_1\di_1^2=s+2\di_1^2+\dots+2\di_k^2.\]
    
    Suppose that we consider a specific $m_1$ satisfying the conditions of Lemma~\ref{lemma: recursion initiation}. The value of $m_1$ must also satisfy the equation
    \begin{equation}\label{eq:firstdimeq}
        (m_1-2)\di_1^2=s+2\di_2^2+\dots+2\di_k^2.
    \end{equation}

    Now, we can recursively apply Corollary~\ref{corollary: u_i recursion} in order to determine whether there exist valid simple object dimensions for $\C$: for each value of $u_{i+1}$ satisfying the first and third conditions in the corollary, we recursively check whether the second condition has a solution. We initiate this recursion with equation~\eqref{eq:firstdimeq}.

    Note that each time we recurse, we reduce the number of $\di_j$s on the right-hand side by $1$. As a result, after we recurse a sufficient number of times, we will reach an equation of the form
    \begin{equation}\label{eq:finaleq}
        c_k\di_k^2=s.
    \end{equation}
    
    Clearly, in order for there to exist a solution, $\frac{s}{c_k}$ must be a perfect square. Hence, if a case results in either:
    \begin{itemize}
        \item Equation~\eqref{eq: dimeq u_i} with $c_i\leq 0$, or
        \item Equation~\eqref{eq:finaleq} with a $c_k$ such that $\frac{s}{c_k}$ is not a perfect square, 
    \end{itemize}
    that case cannot represent an odd-dimensional MTC of rank $n$ with $s$ invertible objects.
\end{proof}

\subsection{Adjoint algorithm}\label{section: adjoint algorithm}

The algorithm from Section~\ref{section: basic algorithm} allows us to generate candidate Frobenius-Perron dimensions for all simple objects in an odd-dimensional MTC. However, this algorithm becomes slow with a large number of simple objects. Therefore, in this section, we present a second algorithm that generates the candidate dimensions of only simple objects in the adjoint subcategory $\Cad$, rather than all simple objects. The code for the implementation of this algorithm can be found in Appendix~\ref{section: algorithm code} in the function \texttt{adjoint\_algorithm}.

Let the non-invertible simple objects in $\Cad$ be $X_1, X_1^*, \dots, X_k, X_k^*$, and let the dimensions of $X_1, X_2, \dots, X_k$ be $\di_1\geq\di_2\geq\dots\geq\di_k$. Also let $m_i=\frac{\FPdim(\C)}{\di_i^2}$ for every $i$. The algorithm is based on the fact that $\sizeGC\FPdim(\Cad)=\FPdim(\C)$, by~\cite[Proposition 8.20]{ENO1}. Specifically, if we let $s=\sizeGCad$, we know that
\[\FPdim(\Cad)=\frac{\FPdim(\C)}{\sizeGC}=s+2\di_1^2+\dots+2\di_k^2.\]
Thus, as $\FPdim(\C)=m_1\di_1^2$, in place of equation~\eqref{eq:firstdimeq}, we have
\[\left(\frac{m_1}{\sizeGC}-2\right)\di_1^2=s+2\di_2^2+\dots+2\di_k^2.\]
Note that Lemma~\ref{lemma: algorithm recursion} and Corollary~\ref{corollary: u_i recursion} still hold. Using these, we can recurse as we did in Algorithm~\ref{alg: dim generation} to generate potential solutions.

However, the initiation from Lemma~\ref{lemma: recursion initiation} needs the following adjustment.

\begin{lemma}\label{lemma: Cad recursion initiation}
    The value $m_1$ must satisfy the following two conditions:
    \begin{itemize}
        \item $m_1\equiv \rank(\C)\pmod{8}$, and
        \item $\sizeGC\leq m_1\leq 2k\cdot\sizeGC+\frac{s\cdot\sizeGC}{9}$.
    \end{itemize}
\end{lemma}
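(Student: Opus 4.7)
The plan is to mirror the proof of Lemma~\ref{lemma: recursion initiation}, making the appropriate adjustments to account for the fact that we are now bounding $m_1 = \FPdim(\C)/\di_1^2$ where $X_1$ is the largest non-invertible simple object in $\Cad$ rather than in $\C$, and that $\Cad$ itself may contain invertibles (contributing $s = \sizeGCad$).

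For the congruence condition, I would apply \cite[Lemma 5.4]{CP} directly: $\FPdim(\C) \equiv \rank(\C) \pmod{8}$. Since $\FPdim(\C) = m_1 \di_1^2$ with $\di_1$ odd (so $\di_1^2 \equiv 1 \pmod{8}$), this yields $m_1 \equiv \rank(\C) \pmod{8}$. This step is identical to the one in Lemma~\ref{lemma: recursion initiation}.

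For the two-sided bound, I would combine the faithfulness of the universal grading with the decomposition of $\FPdim(\Cad)$. Since $\Cad$ is the trivial component of the universal grading of $\C$ and all components share the same Frobenius--Perron dimension, by~\cite[Proposition 8.20]{ENO2} we have
\[
\FPdim(\C) \;=\; \sizeGC \cdot \FPdim(\Cad) \;=\; \sizeGC\bigl(s + 2\di_1^2 + 2\di_2^2 + \cdots + 2\di_k^2\bigr).
\]
Equating this with $m_1 \di_1^2$ gives
\[
m_1\di_1^2 \;=\; \sizeGC\bigl(s + 2\di_1^2 + \cdots + 2\di_k^2\bigr).
\]
For the lower bound, I would drop all terms except $2\di_1^2$ inside the parentheses to obtain $m_1\di_1^2 \geq 2\,\sizeGC\,\di_1^2$, hence $m_1 \geq \sizeGC$ (in fact $m_1 \geq 2\,\sizeGC$, but the weaker bound suffices). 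For the upper bound, I would replace each $\di_j^2$ for $j \geq 2$ by $\di_1^2$ to get $m_1\di_1^2 \leq \sizeGC(s + 2k\di_1^2)$, and divide by $\di_1^2$. Since $\di_1 \geq 3$ (as $\di_1 > 1$ is odd), $\di_1^2 \geq 9$, yielding
\[
m_1 \;\leq\; \frac{s \cdot \sizeGC}{\di_1^2} + 2k \cdot \sizeGC \;\leq\; 2k \cdot \sizeGC + \frac{s \cdot \sizeGC}{9}.
\]

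There is no real obstacle here: the proof is a direct adaptation of Lemma~\ref{lemma: recursion initiation}, replacing the identity $\FPdim(\C) = \sizeGC \cdot \FPdim(\C_g)$ (for any grading component $\C_g$) with the dimension formula for $\Cad$ that now includes the invertibles of $\Cad$. The only subtle point worth flagging is that, unlike in the basic algorithm where the perfect case ($s=1$) allowed the sharper bound $\di_1^2 \geq 225$, here we are content with $\di_1^2 \geq 9$ uniformly since the invertible count $s = \sizeGCad$ refers to invertibles in $\Cad$ and does not force $\Cad$ to be perfect.
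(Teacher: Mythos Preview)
Your proof is correct and follows essentially the same approach as the paper's. The only cosmetic difference is in the lower bound: the paper argues $m_1 = \FPdim(\C)/\di_1^2 \ge \FPdim(\C)/\FPdim(\Cad) = \sizeGC$ directly from $\di_1^2 \le \FPdim(\Cad)$, whereas you drop terms in the dimension formula to get the (stronger) $m_1 \ge 2\,\sizeGC$; both are immediate.
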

\begin{proof}
    By~\cite[Lemma 5.4]{CP}, $\FPdim(\mathcal{C})\equiv\rank(\mathcal{C})\pmod{8}$, so $m_1\di_1^2\equiv \rank(\C)\pmod{8}$. Hence, as $\di_1$ is odd, $m_1\equiv \rank(\C)\pmod{8}$, showing the first condition.

    Additionally, we have $m_1=\frac{\FPdim(\C)}{\di_1^2}\geq\frac{\FPdim(\C)}{\FPdim(\Cad)}=\sizeGC$.
    
    Finally, we know that
    \[m_1\di_1^2=\FPdim(\C)=\sizeGC\cdot(s+2\di_1^2+\dots+2\di_k^2)\leq \sizeGC\cdot(s+2k\di_1^2),\]
    so $m_1\leq\frac{s\cdot\sizeGC}{\di_1^2}+2k\cdot\sizeGC$. As $\di_1\geq 3$, this implies that $m_1\leq 2k\cdot\sizeGC+\frac{s\cdot\sizeGC}{9}$.
\end{proof}

As a result, we have the following algorithm.

\begin{algorithm}\label{alg: Cad dim generation}
    In order to determine whether an odd-dimensional MTC $\C$ can exist with $\rank(\Cad)=n$ and $\sizeGCad=s$ for odd integers $n$ and $s$, perform the following steps, given the values of $\rank(\C)$ and $\sizeGC$:
    \begin{enumerate}
        \item Initialize the variable $k$ to $\frac{n-s}{2}$, and $t$ to $225$ if $\sizeGC=1$ and to $9$ otherwise. 
        \item For every integer value $m_1$ satisfying the conditions in Lemma~\ref{lemma: Cad recursion initiation}, set $i=1$ and $c_1=\frac{m_1}{\sizeGC}-2$. Then recursively perform the following steps.
        \item If $i<k$, for every odd integer value $u_{i+1}$ that satisfies the conditions of Corollary~\ref{corollary: u_i recursion}, set $c_{i+1}$ to $\frac{c_iu_{i+1}^2}{u_i^2}-2$, increment $i$ and repeat this step. 
        \item If $i=k$ (i.e., the recursion is complete), check whether $\frac{s}{c_k}$ is a perfect square. 
        \begin{itemize}
            \item If yes, we know $\di_k^2=\frac{s}{c_k}$. Calculate the rest of the dimensions of the simple objects of $\C$, using the previously computed values of $u_1, \dots, u_k$, and verify whether this is a valid solution.
            \item If no, backtrack to the prior value of $i$.
        \end{itemize}
    \end{enumerate}
\end{algorithm}

\subsection{General results for classification}\label{sec: general results}

In this subsection, we include results that are true for general ranks, which we will apply for specific cases in later sections to classify odd-dimensional MTCs of given ranks.

We begin by proving the following Proposition, which is a generalization of~\cite[Proposition 5.6]{CP}.

\begin{proposition}\label{lemma: fixed simple}
Let $\mathcal C$ be an MTC such that $\mathcal{C}_\mathrm{pt}\subseteq \mathcal C_{\mathrm{ad}}$ and $\Cpt$ is odd-dimensional. Consider the action of $\mathcal{G(C)}$ on the set of simple objects of $\mathcal C$. If a simple object $X$ is fixed by this action, then $X\in \mathcal C_{\mathrm{ad}}.$
\end{proposition}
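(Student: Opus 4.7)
The plan is to translate the conclusion $X\in\Cad$ into a centralizer condition and then reduce everything to showing that the twists of all invertibles are trivial. Since $\mathcal{C}$ is modular, $(\Cpt)'=\Cad$, so it suffices to prove that $X$ centralizes every invertible object. Recall that for invertible $g$ and simple $X$, the product $g\otimes X$ is again simple, so the centralizer condition $c_{X,g}c_{g,X}=\operatorname{id}$ is equivalent to $s_{g,X}=\di_g\di_X=\di_X$. Thus the task becomes to compute $s_{g,X}$ and show it equals $\di_X$.

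The balancing equation makes this computation short, provided we know the twists of invertibles are trivial. Since $g\otimes X\cong X$ by hypothesis, the only nonzero fusion coefficient $N_{g,X}^Z$ occurs at $Z=X$ and equals $1$, so
\[
s_{g,X}=\theta_g^{-1}\theta_X^{-1}\theta_X\,\di_X=\theta_g^{-1}\di_X.
\]
Hence the whole statement reduces to verifying that $\theta_g=1$ for every $g\in\GC$.

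To establish this, I will exploit the standing hypothesis $\Cpt\subseteq\Cad=(\Cpt)'$, which says precisely that $\Cpt$ centralizes itself, i.e., $\Cpt$ is a symmetric fusion category. In a symmetric fusion category the ribbon axiom collapses to $\theta_{X\otimes Y}=\theta_X\otimes\theta_Y$, so the assignment $g\mapsto\theta_g$ is a group homomorphism $\GC\to\mathbf{k}^\times$. Combining this with the duality relation $\theta_{g^{-1}}=\theta_{g^*}=\theta_g$ gives $\theta_g^2=\theta_g\theta_{g^{-1}}=\theta_{\mathbf 1}=1$, so this homomorphism takes values in $\{\pm 1\}$. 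Since $|\GC|=\FPdim(\Cpt)$ is odd by hypothesis, an odd-order group admits no nontrivial homomorphism to $\{\pm 1\}$, and therefore $\theta_g=1$ for all $g\in\GC$. Plugging this back yields $s_{g,X}=\di_X$, so $X\in(\Cpt)'=\Cad$, as desired.

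The main obstacle I anticipate is executing the twist analysis cleanly, in particular recognizing that the containment $\Cpt\subseteq\Cad$ is exactly symmetry of $\Cpt$ and using the oddness hypothesis at precisely the right step (to kill the possible sign). Once those observations are in place, the balancing equation delivers the conclusion in one line.
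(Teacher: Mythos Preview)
Your proof is correct and follows essentially the same route as the paper: use $(\Cpt)'=\Cad$, compute $s_{g,X}$ via the balancing equation and the hypothesis $g\otimes X\cong X$, and conclude $X\in(\Cpt)'$ once you know $\theta_g=1$. The only difference is that the paper obtains $\theta_g=1$ by citing \cite[Corollary 2.7]{DGNO1} (odd-dimensional symmetric implies Tannakian, hence trivial twists), whereas you supply a direct argument for the pointed case using the homomorphism $g\mapsto\theta_g$ into $\{\pm1\}$ and the oddness of $|\GC|$; this is a harmless and arguably more self-contained variant.
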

\begin{proof}
Suppose there exists a simple object $X$ in $\mathcal C$ that is fixed by the action. Since $\mathcal{C}_\mathrm{pt}\subseteq \mathcal C_{\mathrm{ad}}$ and $(\mathcal{C}_\mathrm{pt})'=\mathcal C_{\mathrm{ad}}$, it follows that $\mathcal{C}_\mathrm{pt}$ is an odd-dimensional symmetric fusion category. Then by~\cite[Corollary 2.7]{DGNO1} we know that $\theta_h=1$ for all $h\in \mathcal{C}_\mathrm{pt}.$ Using the balancing equation \eqref{balancing}, we obtain
\begin{align*}
s_{h,X}=\theta_h^{-1}\theta_X^{-1}\theta_X \dim(X)=\dim(X)=\dim(h)\dim(X), \text{ for all } h\in \mathcal{C}_\mathrm{pt}.
\end{align*}
Hence by~\cite[Proposition 2.5]{M} we conclude that $X\in (\mathcal{C}_\mathrm{pt})'=\mathcal C_{\mathrm{ad}}$, as desired.
\end{proof}

\begin{corollary}\label{corollary: same dim outside Cad}
    Let $\C$ be an MTC such that $\sizeGC$ is an odd prime $p$, $\Cadpt$ is non-trivial, and the ranks of all non-adjoint components of the universal grading are $p$. Then, for each non-adjoint component of the universal grading, the simple objects in that component form an orbit under the action of $\GC$ on the simple objects of $\C$. Additionally, all simple objects outside of $\Cad$ have the same Frobenius-Perron dimension.
\end{corollary}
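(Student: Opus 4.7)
The plan is to reduce to the situation of the preceding Proposition and then exploit the fact that every component of the universal grading has the same Frobenius-Perron dimension.

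First I would observe that the hypotheses force $\Cpt\subseteq\Cad$. The intersection $\Cpt\cap\Cad$ equals $\Cadpt$, which is non-trivial by assumption. Since $\Cpt$ has prime order $p$, its only fusion subcategories are $\Vect$ and $\Cpt$ itself, so $\Cpt\cap\Cad=\Cpt$. In particular every invertible object of $\C$ lies in the trivial component $\C_e=\Cad$ of the universal grading. Therefore, for any invertible $h$ and any simple $X\in\C_g$, the object $h\otimes X$ lies again in $\C_g$, so the action of $\GC$ on simples preserves each universal grading component.

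Next I would invoke the preceding Proposition. Since $\Cpt\subseteq\Cad$ and $\FPdim(\Cpt)=p$ is odd, the Proposition asserts that any simple object fixed by the $\GC$-action must lie in $\Cad$. Equivalently, for every non-adjoint component $\C_g$ (with $g\neq e$), every simple in $\C_g$ has non-trivial $\GC$-orbit. Orbit sizes divide $|\GC|=p$, so each such orbit has size exactly $p$. By hypothesis each non-adjoint component has rank $p$, so $\C_g$ consists of a single $\GC$-orbit. This proves the first claim.

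For the second claim I would use that tensoring by an invertible preserves Frobenius-Perron dimension, so all simples in a given orbit share a common dimension $d_g$. Moreover, since the universal grading is faithful, every component has Frobenius-Perron dimension equal to $\FPdim(\Cad)$. As $\C_g$ contains $p$ simples each of dimension $d_g$, we get $p d_g^2=\FPdim(\Cad)$, which determines $d_g=\sqrt{\FPdim(\Cad)/p}$ independent of $g$. Hence all simples outside $\Cad$ have the same Frobenius-Perron dimension. There is no real obstacle here: the only subtle point is the reduction $\Cpt\subseteq\Cad$ via primality, after which the statement follows from the Proposition together with standard properties of the universal grading.
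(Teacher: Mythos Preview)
Your proof is correct and follows essentially the same approach as the paper: you deduce $\Cpt\subseteq\Cad$ from primality of $|\GC|$ and non-triviality of $\Cadpt$, apply the preceding Proposition to conclude that simples outside $\Cad$ have orbit size $p$, match this against the assumed rank $p$ of each non-adjoint component, and finish using equality of the Frobenius-Perron dimensions of the components. The only cosmetic difference is that you phrase the inclusion $\Cpt\subseteq\Cad$ via fusion subcategories of $\Cpt$, while the paper phrases it via subgroups of $\GC$; these are equivalent.
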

\begin{proof}
    As $\mathcal{G}(\Cad)$ is a non-trivial subgroup of $\GC$ and $\sizeGC$ is prime, $\Cpt\subseteq\Cad$. Thus, by Proposition~\ref{lemma: fixed simple}, a simple object $X\not\in\Cad$ is not fixed by the action of $\GC$ on the set of simple objects of $\C$. Therefore, the size of the orbit of $X$ is $p$. Since $\Cpt\subseteq\Cad$, the action preserves the grading, and hence all objects of the orbit are in the same component, so the size of the orbit of $X$ is equal to the rank of the component of $X$ in the universal grading. Hence, all simple objects in that component have the same Frobenius-Perron dimension.

    Additionally, by~\cite[Proposition 8.20]{ENO1}, all components of the universal grading have the same Frobenius-Perron dimension. Hence, as all non-adjoint components have the same rank, all simple objects in the non-adjoint components must have the same Frobenius-Perron dimension.
\end{proof}

In the subsequent proofs, we will frequently use the following remark, as we did in the proof of Corollary~\ref{corollary: same dim outside Cad}.

\begin{remark}\label{remark: Cpt subset Cad}
    Let $\C$ be an MTC such that $\sizeGC$ is prime and $\Cadpt$ is non-trivial. Then as $\GCad$ is a non-trivial subgroup of $\GC$, we know that $\Cpt\subseteq\Cad$.
\end{remark}

\begin{lemma}\label{thm:fixeddims}
    Let $\mathcal{C}$ be an odd-dimensional MTC such that $\sizeGC$ is a prime $p$, $\Cadpt$ is non-trivial, and the ranks of all non-adjoint components of the universal grading are $p$. Then in the multiset of the Frobenius-Perron dimensions of the non-invertible simple objects of $\C$, each element must either appear exactly $2pk$ times for an integer $k$ or its Frobenius-Perron dimension must be divisible by $p$.
\end{lemma}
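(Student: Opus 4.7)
The plan is to combine the action of $\GC$ with duality $X \mapsto X^{*}$ to partition the non-invertible simples into orbits under the dihedral group $D_p = \GC \rtimes \langle\ast\rangle$, and then argue that every orbit whose objects have FP-dimension coprime to $p$ has size exactly $2p$.

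To set this up, note that since $|\GC|=p$ is prime and $\Cadpt$ is non-trivial, Remark~\ref{remark: Cpt subset Cad} gives $\Cpt \subseteq \Cad$, so $\Cadpt = \Cpt$; as a symmetric odd-dimensional fusion category, this is Tannakian, $\Cpt \simeq \Rep(G)$ with $|G| = p$. By the discussion at the end of Section~\ref{section: preliminaries}, the de-equivariantization $\widetilde{\C} := (\Cad)_G$ is an odd-dimensional MTC, and hence MNSD. Let $F\colon \Cad \to \widetilde{\C}$ denote the corresponding tensor functor: it preserves FP-dimensions of objects and sends $\Cpt$ to the unit. For a non-invertible simple $X \in \Cad$, the prime order of $\GC$ forces its stabilizer to be either trivial or the full group. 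If $X$ is $\GC$-fixed, the equivariantization correspondence (via $\Cad \simeq \widetilde{\C}^G$) identifies $F(X)$ with a direct sum of $p$ distinct simples in $\widetilde{\C}$ sharing a common FP-dimension, so $\FPdim(X) = p\cdot\FPdim(Y)$ for each summand $Y$, and in particular $p \mid \FPdim(X)$; together with $X^{*}$ (also $\GC$-fixed, and distinct from $X$ by the MNSD property of $\C$), this yields a $D_p$-orbit of size $2$ at a dimension divisible by $p$. If instead the $\GC$-orbit of $X$ has size $p$, then $F(X) = Y$ is a single simple in $\widetilde{\C}$ with $\FPdim(Y) = \FPdim(X) > 1$, so $Y \not\cong \mathbf{1}$.

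In this second case I would show that $X^{*}$ lies in a different $\GC$-orbit. Indeed, an isomorphism $X^{*} \cong \chi' \otimes X$ for some $\chi' \in \Cpt$ would, upon applying $F$, force $Y^{*} \cong Y$; since $Y$ is non-invertible and $\widetilde{\C}$ is MNSD, this is a contradiction. Thus the $D_p$-orbit of $X$ has size exactly $2p$, and all its members share an FP-dimension.

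For simples outside $\Cad$, Corollary~\ref{corollary: same dim outside Cad} says they all share a common FP-dimension $d_0$; since the universal grading has $|\GC|-1 = p-1$ non-adjoint components of rank $p$, there are $(p-1)p$ such simples, which is a multiple of $2p$ because $p$ is odd. Combining, for any FP-dimension $d$ with $p \nmid d$, the multiplicity $|S_d|$ receives contributions only from $D_p$-orbits of size $2p$ inside $\Cad$ and (when $d = d_0$) the non-adjoint block of size $(p-1)p$, both of which are multiples of $2p$, giving $|S_d| = 2pk$; for $p \mid d$ there is nothing to prove. The step requiring the most care is verifying the behaviour of $F$ on simples—namely, that $F(X)$ is a full $G$-orbit when $X$ is $\GC$-fixed and a single simple when $X$ is $\GC$-free—which I would invoke from the standard equivariantization/de-equivariantization dictionary rather than re-derive.
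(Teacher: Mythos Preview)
Your argument is correct, but the two key steps inside $\Cad$ are handled quite differently from the paper. For a \emph{fixed} simple $X\in\Cad$, the paper avoids de-equivariantization entirely: it picks a non-adjoint component $\C_g$, uses Proposition~\ref{lemma: fixed simple} and the rank-$p$ hypothesis to write its simples as a single $\GC$-orbit $Z_1,\dots,Z_p$, and then computes $X\otimes Z_1$ directly from $g\otimes X\cong X$ to force $\FPdim(X)=N_1 p$. For a \emph{non-fixed} $X$, the paper gets disjointness of the orbits of $X$ and $X^*$ from parity alone: if $X^*$ lay in the $\GC$-orbit of $X$, that orbit (of odd size $p$) would be $*$-stable and hence contain a self-dual non-invertible simple, contradicting that $\C$ itself is MNSD. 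No passage to $\widetilde{\C}$ is needed.

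Your route via $\widetilde{\C}=(\Cad)_G$ is heavier in that it leans on the equivariantization dictionary you flag at the end, but it buys something: your proof that a $\GC$-fixed $X\in\Cad$ has $p\mid\FPdim(X)$ never uses the hypothesis that the non-adjoint components have rank $p$, whereas the paper's fusion-rule computation does. Conversely, the paper's arguments are self-contained and elementary, and its disjointness-via-parity step is strictly simpler than your appeal to the MNSD property of $\widetilde{\C}$. The treatment of the simples outside $\Cad$ is the same in both proofs.
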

\begin{proof}
    First, by Corollary~\ref{corollary: same dim outside Cad}, the Frobenius-Perron dimensions of the simple objects outside of $\Cad$ are equal. Specifically, this dimension appears $p(p-1)$ times in the multiset, which is of the form $2pk$ as $p-1$ is even. Therefore, we exclusively focus on the simple objects inside of $\Cad$ from now on.

    By Remark~\ref{remark: Cpt subset Cad}, consider the action of $\GC\cong\GCad$ on the simple objects of $\mathcal{C}_\mathrm{ad}$. The sizes of its orbits divide $\sizeGC=p$, i.e., they are either $1$ or $p$. Let $X\in\Cad$ be a non-invertible simple object that is not fixed by this action, so the size of its orbit must be $p$. 
    
    Letting $\GC$ be generated by $g$, denote the elements of this orbit by $g^i\otimes X$ for integers $0\leq i\leq p-1$. Then $\FPdim(g^i\otimes X)=\FPdim(X)$ for all $i$. Since the size of the orbit of $X$ is odd, the orbits of $X$ and $X^*$ are disjoint. We also know that $\FPdim(X)=\FPdim(X^*)$. Hence, we have exactly $2p$ simple objects of the same Frobenius-Perron dimension for every $X$ that is not fixed by the action.
    
    Now, suppose that $X$ is fixed by the action. Let $Z_1, \dots, Z_p$ be the simple objects of $\C_g$. By Proposition~\ref{lemma: fixed simple}, we know that none of these elements are fixed by the action of $\GC$. Without loss of generality, suppose that $Z_i=g^{i-1}\otimes Z_1$.
    
    Since $X\in\Cad$, $X\otimes Z_1\in \C_g$. Therefore,
    \[X\otimes Z_1=N_1Z_1\oplus N_2Z_2\oplus\dots\oplus N_pZ_p\]
    for some integers $N_1, \dots, N_p$. We also have
    \[g\otimes X\otimes Z_1=N_1Z_2\oplus N_2Z_3\oplus\dots N_pZ_1.\]
    But $X\otimes Z_1=g\otimes X\otimes Z_1$, so $N_1=N_2=\dots=N_p$. Consequently,
    \[X\otimes Z_1=N_1(Z_1\oplus Z_2\oplus\dots\oplus Z_p).\]
    Taking the Frobenius-Perron dimension of both sides and applying Corollary~\ref{corollary: same dim outside Cad} gives us
    \begin{align*}
        \FPdim(X)\FPdim(Z_1)&=N_1\cdot p\cdot\FPdim(Z_1) \\
        \FPdim(X)&=N_1\cdot p.
    \end{align*}
    Hence, $\FPdim(X)$ is divisible by $p$.
\end{proof}

\begin{remark}\label{remark: fixed fpdim divisible by p}
    From the proof of Lemma~\ref{thm:fixeddims}, we can see that if a simple object in $\C$ is fixed by the action of $\GC$, then its Frobenius-Perron dimension must be divisible by $p$. Also, the number of non-fixed objects in $\C$ must be a multiple of $2p$.
\end{remark}

\begin{remark}
    Considering the action of a group of odd order on the simple objects of $\C$, the proof of Lemma~\ref{thm:fixeddims} also shows that the orbits of a simple object $X\in\C$ and its dual $X^*$ are disjoint.
\end{remark}

We also prove a result that allows us to easily discard certain possible values of $\sizeGC$ for a given rank.

\begin{lemma}\label{lemma: rank linear combination}
    If $\mathcal{C}$ is an odd-dimensional MTC, then $\rank(\mathcal{C})$ is expressible as a non-negative integral linear combination of $\sizeGC$ and $8$.
\end{lemma}
\begin{proof}
    Consider the universal grading $\mathcal{C}=\bigoplus_{g\in\mathcal{G}(\mathcal{C})}\mathcal{C}_g$ of $\mathcal{C}$. By~\cite[Proposition 8.20]{ENO1}, all such $\mathcal{C}_g$'s have the same Frobenius-Perron dimension. But $\rank(\C_g)\equiv\FPdim(\C_g)\pmod{8}$ for all $g\in\GC$~\cite[Lemma 5.4]{CP}, so all components of the universal grading must have ranks that are congruent mod $8$. Letting the minimum value of $\rank(\C_g)$ across all $g$'s be $m$, we therefore have $\rank(\C)=\sizeGC\cdot m+8k$ for some non-negative integer $k$, as desired.
\end{proof}

We proceed with three lemmas with necessary conditions for odd-dimensional MTCs with non-trivial $\Cadpt$, which we will use in future sections. Note that non-trivial $\Cadpt$ implies that $\C$ is neither pointed (as $\rank(\Cad)\geq \rank(\Cadpt)>1$) nor perfect (as $\rank(\Cpt)\geq\rank(\Cadpt)>1$). 

\begin{lemma}\label{lemma: rank more than 1}
    Let $\mathcal{C}$ be an odd-dimensional MTC with non-trivial $(\mathcal{C}_\mathrm{ad})_\mathrm{pt}$ and $p$ be a prime divisor of $\sizeGCad$. Then the universal grading of $\mathcal{C}$ must have at least three components of rank at least $p$. 
\end{lemma}
\begin{proof}
    Note that $p$ must be odd because $p$ is a divisor of $\sizeGCad$, which divides $\sizeGC$ and $\FPdim(\C)$. By~\cite[Lemma 5.1 (a)]{CP}, there must exist a non-trivial component $\C_g$ of the universal grading that has at least $p$ non-invertible simple objects. Because of the duality of universal grading components, the dual of $\C_g$ must also have at least $p$ simple objects. $\Cad$ must clearly have at least $p$ simple objects, completing the proof.
\end{proof}

\begin{lemma}\label{lemma: rank not divisible by p}
    Let $\mathcal{C}$ be an odd-dimensional MTC with non-trivial $(\mathcal{C}_\mathrm{ad})_\mathrm{pt}$. If $|\mathcal{G}(\mathcal{C})|$ is a prime number $p$, then the universal grading of $\mathcal{C}$ has at most one component with rank not divisible by $p$. Additionally, if such a component exists, it must be $\mathcal{C}_\mathrm{ad}$. 
\end{lemma}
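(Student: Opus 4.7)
The plan is to use the action of $\mathcal{G}(\mathcal{C})$ on simple objects and combine it with Proposition~\ref{lemma: fixed simple} to control ranks of the non-adjoint components. First I would observe that, since $(\mathcal{C}_\mathrm{ad})_\mathrm{pt}$ is non-trivial, the subgroup $\mathcal{G}(\mathcal{C}_\mathrm{ad}) \leq \mathcal{G}(\mathcal{C})$ is non-trivial; as $|\mathcal{G}(\mathcal{C})| = p$ is prime, this forces $\mathcal{G}(\mathcal{C}_\mathrm{ad}) = \mathcal{G}(\mathcal{C})$, hence $\mathcal{C}_\mathrm{pt} \subseteq \mathcal{C}_\mathrm{ad}$ (this is exactly Remark~\ref{remark: Cpt subset Cad}).

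Next I would consider the action of $\mathcal{G}(\mathcal{C})$ by tensor product on the simple objects of $\mathcal{C}$. Because every invertible lies in $\mathcal{C}_\mathrm{ad}$, tensoring by an invertible preserves the universal grading: if $X \in \mathcal{C}_h$ and $g \in \mathcal{G}(\mathcal{C})$, then $g \otimes X \in \mathcal{C}_e \cdot \mathcal{C}_h = \mathcal{C}_h$. Hence for each $h \in \mathcal{G}(\mathcal{C})$, the group $\mathcal{G}(\mathcal{C}) \cong \mathbb{Z}/p$ acts on the set of simple objects of $\mathcal{C}_h$, and every orbit has size dividing $p$, i.e.\ size $1$ or $p$.

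Now I would invoke Proposition~\ref{lemma: fixed simple}: since $\mathcal{C}_\mathrm{pt} \subseteq \mathcal{C}_\mathrm{ad}$ and $\mathcal{C}_\mathrm{pt}$ is odd-dimensional, any simple object of $\mathcal{C}$ fixed by the action must lie in $\mathcal{C}_\mathrm{ad}$. Therefore, for any non-trivial component $\mathcal{C}_h$ with $h \neq e$, no simple object of $\mathcal{C}_h$ is fixed, so every orbit in $\mathcal{C}_h$ has size exactly $p$. Summing over orbits gives $p \mid \rank(\mathcal{C}_h)$. This shows that the only component whose rank can fail to be divisible by $p$ is $\mathcal{C}_\mathrm{ad}$ itself, which establishes both conclusions simultaneously.

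There is no real obstacle here; the argument is essentially a direct application of Proposition~\ref{lemma: fixed simple} once one notes that $\mathcal{C}_\mathrm{pt} \subseteq \mathcal{C}_\mathrm{ad}$ forces the $\mathcal{G}(\mathcal{C})$-action to preserve the universal grading. The only point needing mild care is recording that the action does preserve components (which uses precisely the containment $\mathcal{C}_\mathrm{pt} \subseteq \mathcal{C}_\mathrm{ad}$ coming from the primality of $p$ and the non-triviality of $(\mathcal{C}_\mathrm{ad})_\mathrm{pt}$).
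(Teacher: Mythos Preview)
Your proof is correct and follows essentially the same approach as the paper: both establish $\mathcal{C}_\mathrm{pt}\subseteq\mathcal{C}_\mathrm{ad}$ from the primality of $p$ and then use that fixed simples under the $\mathcal{G}(\mathcal{C})$-action lie in $\mathcal{C}_\mathrm{ad}$, forcing all non-adjoint components to have rank divisible by $p$. The only difference is cosmetic: the paper cites \cite[Proposition 5.6]{CP} for this last step, whereas you unpack the argument directly via Proposition~\ref{lemma: fixed simple} (which is the paper's own generalization of that cited result), making your version more self-contained.
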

\begin{proof}
    By Remark~\ref{remark: Cpt subset Cad}, $\Cpt\subseteq\Cad$. We also have $\FPdim(\Cpt)=\sizeGC=p$. As $\Cadpt$ is non-trivial, $\C$ is not pointed, so by~\cite[Proposition 5.6]{CP}, all non-trivial components in the universal grading of $\C$ must have rank divisible by $p$. The only component whose rank may not be divisible by $p$ is $\Cad$.
\end{proof}

\begin{lemma}\label{lemma: odd number of components of rank}
    Let $\mathcal{C}$ be an odd-dimensional MTC with non-trivial $(\mathcal{C}_\mathrm{ad})_\mathrm{pt}$. In the universal grading of $\mathcal{C}$, there is exactly one value of rank such that an odd number of components of the universal grading of $\mathcal{C}$ have this rank. Additionally, the value of such rank is equal to $\rank(\Cad)$ and is greater than $1$. 
\end{lemma}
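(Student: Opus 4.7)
The plan is to exploit the duality symmetry on the universal grading together with the fact that $\sizeGC$ is odd. Recall that for the universal grading $\mathcal{C} = \bigoplus_{g \in \GC} \mathcal{C}_g$, the dualizing functor sends $\mathcal{C}_g$ to $\mathcal{C}_{g^{-1}}$, so $\rank(\mathcal{C}_g) = \rank(\mathcal{C}_{g^{-1}})$ for every $g \in \GC$.

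The first step is to observe that since $\C$ is odd-dimensional and $\sizeGC$ divides $\FPdim(\C)$, the group $\GC$ has odd order. Consequently, the identity $e$ is the unique element of $\GC$ satisfying $g = g^{-1}$, which lets me partition $\GC \setminus \{e\}$ into pairs $\{g, g^{-1}\}$ with $g \neq g^{-1}$. Each such pair contributes two components $\mathcal{C}_g$ and $\mathcal{C}_{g^{-1}}$ with the same rank to the universal grading, while the identity component contributes a single component $\mathcal{C}_e = \Cad$ with rank $\rank(\Cad)$.

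Next I would count, for each positive integer $r$, the number $N_r$ of components of rank $r$. By the pairing argument, for $r \neq \rank(\Cad)$ the count $N_r$ is a sum of $2$'s (one for each pair contributing rank $r$) and hence even. For $r = \rank(\Cad)$ we have $N_r = 1 + 2k$ where $k$ is the number of pairs $\{g, g^{-1}\}$ with $\rank(\mathcal{C}_g) = \rank(\Cad)$, which is odd. Thus $\rank(\Cad)$ is the unique rank value occurring an odd number of times.

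Finally, for the claim $\rank(\Cad) > 1$, I would simply note that $\Cadpt \subseteq \Cad$, and since $\Cadpt$ is assumed non-trivial, $\rank(\Cad) \geq \rank(\Cadpt) > 1$. There is no real obstacle here — the entire argument reduces to a parity count enabled by odd order of $\GC$, so the main thing to be careful about is stating the pairing cleanly and noting that the dualizing functor is a bijection on isomorphism classes of simple objects, so it preserves ranks of graded components.
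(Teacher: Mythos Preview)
Your proof is correct and follows essentially the same approach as the paper: both use the duality pairing $\mathcal{C}_g \leftrightarrow \mathcal{C}_{g^{-1}}$ together with the oddness of $\sizeGC$ to show that every rank value other than $\rank(\Cad)$ is hit an even number of times, and invoke non-triviality of $\Cadpt$ for the final inequality. Your write-up is somewhat more explicit about why the identity is the unique self-inverse element in a group of odd order, but the argument is the same.
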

\begin{proof}
    Given that the number of components in the universal grading must be odd (since $|\mathcal{G}(\mathcal{C})|$ is odd) and dual components of the universal grading other than $\mathcal{C}_\mathrm{ad}$ have the same rank, there must be an even number of non-adjoint components whose rank matches $\rank(\mathcal{C}_\mathrm{ad})$. Including $\Cad$ gives us an odd total number of components of that rank. Since $(\mathcal{C}_\mathrm{ad})_\mathrm{pt}$ is non-trivial, this rank must be greater than $1$. All other ranks must occur an even number of times in the universal grading of $\mathcal{C}$ because of the duality of universal grading components.
\end{proof}

The next two results will be used to discard multiple cases in later sections. Note that the following proposition holds for all MTCs (not just odd-dimensional ones). 

\begin{proposition}\label{proposition: invertible implies equal rank}
    Let $\C$ be an MTC. Then exactly $\frac{\sizeGC}{\sizeGCad}$ components of the universal grading of $\C$ contain an invertible object. Additionally, for each component that contains an invertible object, consider the multiset of the Frobenius-Perron dimensions of all its simple objects. Then, all of those multisets are identical.
\end{proposition}
\begin{proof}
    Suppose that a component $\C_g$ of the universal grading contains an invertible object $h$. Also, let the simple objects in $\C_g$ be $X_1, \dots, X_k$. We prove that all simple objects in $\Cad$ are $h^*\otimes X_1, \dots, h^*\otimes X_k$. As $h^*\in\C_{g^{-1}}$ is invertible, all of those objects are non-isomorphic simple objects in $\Cad$. Hence, as $\FPdim(h^*\otimes X_i)=\FPdim(X_i)$,
    \begin{align}
        \FPdim(\Cad)&\geq\FPdim(h^*\otimes X_1)^2+\dots+\FPdim(h^*\otimes X_k)^2 \label{eq: Cad dimension inequality} \\
        &=\FPdim(X_1)^2+\dots+\FPdim(X_k)^2 \nonumber,
    \end{align}
    where the inequality is strictly greater if there are simple objects other than $h^*\otimes X_1, \dots, h^*\otimes X_k$ in $\Cad$ and equal otherwise. Also, we have
    \[\FPdim(\C_g)=\FPdim(X_1)^2+\dots+\FPdim(X_k)^2.\]

    By~\cite[Proposition 8.20]{ENO1}, $\FPdim(\Cad)=\FPdim(\C_g)$, so the equality case in Inequality~\eqref{eq: Cad dimension inequality} holds. As a result, the only simple objects in $\Cad$ are $h^*\otimes X_1, \dots, h^*\otimes X_k$. As $h^*$ is invertible, these have the same Frobenius-Perron dimensions as $X_1, \dots, X_k$, so the multiset of dimensions of the simple objects in $\C_g$ is the same as that for $\Cad$. This shows the second statement of the proposition. This also implies that every component that contains an invertible object actually contains $\sizeGCad$ invertible objects. Hence exactly $\frac{\sizeGC}{\sizeGCad}$ components contain an invertible object, as desired.
\end{proof}

\begin{corollary}\label{corollary: three components rank Cad}
    Let $\C$ be an odd-dimensional MTC with $\GCad\lneq\GC$. Then there must exist at least three components of the universal grading of $\C$ that have the same rank as $\Cad$.
\end{corollary}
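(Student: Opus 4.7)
The plan is to leverage Proposition~\ref{proposition: invertible implies equal rank} together with the fact that $\C$ is odd-dimensional (hence MNSD). Since $\GCad \lneq \GC$, there is an invertible object $h \in \GC$ lying outside $\Cad$, so $h$ sits in some non-trivial component $\C_g$ of the universal grading with $g \neq e$. By Proposition~\ref{proposition: invertible implies equal rank}, every universal grading component containing an invertible object has the same rank as $\Cad$, so immediately $\rank(\C_g) = \rank(\Cad)$.

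The key next step is to produce a third component with this rank. For this I would invoke that $\C$ being odd-dimensional forces $\C$ to be MNSD (by \cite[Corollary 8.2]{NS} and \cite[Theorem 2.2]{HR}, as cited in the introduction), so $h \not\cong h^*$ since $h$ is a non-unit simple object. Consequently $g \neq g^{-1}$, and $h^* \in \C_{g^{-1}}$ is an invertible object in a component distinct from both $\C_g$ and $\Cad = \C_e$. Applying Proposition~\ref{proposition: invertible implies equal rank} once more gives $\rank(\C_{g^{-1}}) = \rank(\Cad)$.

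Putting this together, the three components $\Cad$, $\C_g$, $\C_{g^{-1}}$ are pairwise distinct (since $e$, $g$, $g^{-1}$ are pairwise distinct) and all have rank equal to $\rank(\Cad)$, yielding the desired conclusion. There is essentially no obstacle here: the argument is a two-line consequence of Proposition~\ref{proposition: invertible implies equal rank} once one observes that in an odd-dimensional (hence MNSD) MTC an invertible outside $\Cad$ automatically yields a second invertible (its dual) in yet another distinct non-trivial grading component.
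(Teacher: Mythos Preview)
Your proof is correct and follows essentially the same approach as the paper's. The only minor differences are that the paper obtains $\rank(\C_{g^{-1}})=\rank(\Cad)$ directly from the duality of grading components (rather than reapplying Proposition~\ref{proposition: invertible implies equal rank} to $h^*$), and leaves the distinctness of $g$ and $g^{-1}$ implicit (it follows immediately from $|U(\C)|=|\GC|$ being odd), whereas you make this step explicit via MNSD.
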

\begin{proof}
    Clearly, one such component is $\Cad$ itself. As $\GCad\lneq\GC$, there must exist a non-trivial component $\C_g$ of the universal grading that contains an invertible object. By Proposition~\ref{proposition: invertible implies equal rank}, $\rank(\C_g)=\rank(\Cad)$. Since $\rank(\C_{g^{-1}})=\rank(\C_g)$, we have the desired three components: $\Cad$, $\C_g$, and $\C_{g^{-1}}$.    
\end{proof}

Next, we will show the following proposition, whose proof follows the proof of~\cite[Proposition 6]{DKP}.

\begin{proposition}\label{proposition: semidirect product}
    Let $\C$ be an odd-dimensional MTC of Frobenius-Perron dimension $p^rq^2$, where $p$ and $q$ are distinct odd prime numbers such that $p\nmid (q+1)$ and $\sizeGC=p^{r-1}$. Then
    \begin{enumerate}[label=(\alph*)]
        \item Either $p\mid q-1$ or $q\mid p-1$. 
        \item Additionally, if $r=2$, then $\mathcal{C}\cong\Rep(D^\omega(H))$ with $H\cong\mathbb{Z}_q\rtimes\mathbb{Z}_p$ non-abelian for some $3$-cocycle $\omega$.
    \end{enumerate}
\end{proposition}
\begin{proof}
    (a) Since $\sizeGC=p^{r-1}$, we know that $\FPdim(\Cad)=pq^2$. Therefore, by~\cite[Theorem 1.6]{ENO2}, $\Cad$ is solvable, so by~\cite[Proposition 4.5 (iv)]{ENO2}, it must contain a non-trivial invertible object. Since $p$ is a prime, $\GCad$ is a subgroup of $\GC$, and $\sizeGCad$ divides $\FPdim(\Cad)$, we know that $\sizeGCad=p$, and $\GCad\cong\Z_p$. 
    
   Consider the de-equivariantization $(\Cad)_{\Z_p}$ of $\Cad$ by $\GCad\cong\Z_p$ (see Section~\ref{section: equivariantization}). We know that $\FPdim((\Cad)_{\Z_p})=\frac{\FPdim(\Cad)}{p}=q^2$. Since $(\Cad)_{\Z_p}$ is modular, we know that $(\Cad)_{\Z_p}\cong\Vect_G^\chi$, where $G$ is an abelian group of order $q^2$ and $\chi$ is a nondegenerate quadratic form on $G$, see~\cite[Lemma 4.11]{DN}.

Note that $G\cong \Z_{q^2}$ or $\Z_q\times \Z_q$, and so it has either $1$ or $q+1$ subgroups of order $q$, respectively. Since by assumption $p\nmid q+1$, then there exists at least one subgroup of order $q$ that is invariant under the action of $\Z_p$. 
    Now, take the subcategory $\mathcal{D}$ of $(\Cad)_{\Z_p}$ associated with said subgroup, which has Frobenius-Perron dimension $q$, and consider the equivariantization $\mathcal{D}^{\Z_p}$ of $\mathcal{D}$ by $\Z_p$. We know that $\mathcal{D}^{\Z_p}\subseteq\Cad$, $\FPdim(\mathcal{D}^{\Z_p})=pq$, and $\mathcal{D}^{\Z_p}$ is integral. Since $|\mathcal{G}(\mathcal{D}^{\Z_p})|\leq\sizeGCad=p<pq=\FPdim(\mathcal{D}^{\Z_p})$, we also know that $\mathcal{D}^{\Z_p}$ is not pointed. As a result, since both $p$ and $q$ are odd primes, by~\cite[Theorem 6.3]{EGO}, we know that $\mathcal{D}^{\Z_p}\cong\Rep(K)$ where either $K\cong\Z_q\rtimes\Z_p$ or $K\cong\Z_p\rtimes\Z_q$ is non-abelian. Therefore, either $p\mid q-1$ or $q\mid p-1$, as desired.

    (b) From the proof of part (a), we know that $\mathcal{D}^{\Z_p}\cong\Rep(K)\subseteq\C$ with either $K\cong\Z_q\rtimes\Z_p$ or $K\cong\Z_p\rtimes\Z_q$, so there exists the canonical \'etale algebra $k^{\Z_q\rtimes\Z_p}$ or $k^{\Z_p\rtimes\Z_q}$ of Frobenius-Perron dimension $pq$ in $\C$. 

    Hence, as $\FPdim(\C)=p^2q^2$, by~\cite[Corollary 4.1 (i)]{DMNO}, $\C\cong\mathcal{Z}(\mathcal{B})$ as braided tensor categories, where $\mathcal{B}$ is a fusion category of Frobenius-Perron dimension $pq$ and $\mathcal{Z}(B)$ denotes its Drinfeld center (see~\cite[Definition 7.13.1]{EGNO}). By~\cite{EGO}, either $\mathcal{B}\cong\Vect_H^\omega$ or $\mathcal{B}\cong\Rep(H)$ for some group $H$ of order $pq$ and some $3$-cocycle $\omega$ of $H$. Then $\C\cong\mathcal{Z}(\Vect_H^\omega)$.
    
    Given that $p$ and $q$ are primes, we know that one of the following is true: (1) $H\cong\Z_{pq}$, (2) $p\mid q-1$ and $H\cong\Z_q\rtimes\Z_p$ is non-abelian, or (3) $q\mid p-1$ and $H\cong\Z_p\rtimes\Z_q$ is non-abelian. By casework, we will show that only option (2) is possible.
    
  First, suppose for contradiction that $H\cong\Z_{pq}$. As all Sylow subgroups of $\Z_{pq}$ are cyclic, its Schur multiplier is trivial, and hence $\mathcal{Z}(\Vect_{\Z_{pq}}^\omega)$ is pointed. But $\FPdim(\C)=p^2q^2$, while $\sizeGC=p$, so $\C$ cannot be pointed, a contradiction. 
    
    Next, assume that $p\mid q-1$ and $H\cong\Z_q\rtimes\Z_p$ is non-abelian. As a result, $\C\cong\mathcal{Z}(\Vect_H^\omega)\cong\Rep(D^\omega(H))$ for some $3$-cocycle $\omega$, as desired. 

   Finally, assume that $q\mid p-1$ and $H\cong\Z_p\rtimes\Z_q$ is non-abelian. Then $\C\cong\mathcal{Z}(\Vect_{H}^\omega)$ for some $3$-cocycle $\omega$. However, $\sizeGC=|\mathcal{G}(\mathcal{Z}(\Vect_H^\omega))|=q$ by~\cite[Proposition 4]{DKP}. This is a contradiction with the assumption $\sizeGC=p$.
\end{proof}

\section{Odd-dimensional MTCs of rank 17--23}\label{section: ranks 17 to 23}

In this section, we complete the classification of odd-dimensional MTCs of rank $17$ to $23$, and we use that classification to show two general results about odd-dimensional MTCs that we use in future sections. 

The classification of odd-dimensional MTCs of rank $17$ to $23$ was begun in~\cite{CP}, which showed that such MTCs must be either pointed or perfect. The following result follows directly from an application of Algorithm~\ref{alg: dim generation}.

\begin{theorem}\label{thm: pointed 17 to 23}
    All odd-dimensional MTCs of rank $17$, $19$, $21$, and $23$ must be pointed.
\end{theorem}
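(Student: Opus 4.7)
The plan is to combine the prior classification result of~\cite{CP} with Algorithm~\ref{alg: dim generation}. Recall from the introduction that \cite{CP} establishes that every odd-dimensional MTC $\C$ with $17\le\rank(\C)\le 23$ is either pointed or perfect; hence it suffices to rule out the perfect case. So I would argue by contradiction: assume $\C$ is a perfect odd-dimensional MTC of rank $n\in\{17,19,21,23\}$, so that $\sizeGC=s=1$, and apply Algorithm~\ref{alg: dim generation} with input $(n,1)$ for each such $n$ to show that no candidate list of Frobenius--Perron dimensions survives.

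A key feature of the perfect setting is that Corollary~\ref{corollary: u_i recursion} (and the bound $t=225$ in Lemma~\ref{lemma: algorithm recursion}) is available, because~\cite[Corollary 7.2]{ENO2} forbids a simple object of a perfect fusion category from having Frobenius--Perron dimension equal to a prime power; in particular every non-invertible simple object satisfies $\di_i\ge 15$. I would first use Lemma~\ref{lemma: recursion initiation}: writing $k=(n-1)/2$ and combining the congruence $m_1\equiv n\pmod{8}$ with the bound $1\le m_1\le 2k+1/9$, the only candidates are $m_1\in\{1,9\}$ for $n=17$, $m_1\in\{3,11\}$ for $n=19$, $m_1\in\{5,13\}$ for $n=21$, and $m_1\in\{7,15\}$ for $n=23$. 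The case $m_1=1$ is instantly impossible, since it would force
\[
\di_1^2=\FPdim(\C)=1+2\sum_{i=1}^k\di_i^2>\di_1^2.
\]

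For each remaining initial value of $m_1$, the recursion in Corollary~\ref{corollary: u_i recursion} combined with $\di_{i+1}^2\ge 225$ produces a very shallow search tree: at each step, the admissible range for $u_{i+1}^2$ collapses quickly because both $c_i$ and the right-hand side of the recursion equation are tightly constrained by the large lower bound on the remaining $\di_j$. I would then exhaust this tree, checking that every branch either hits $c_i\le 0$ partway through, or terminates at a final $c_k$ for which $1/c_k$ fails to be a perfect square (or fails to yield odd integer dimensions when combined with the $u_i$). Since Algorithm~\ref{alg: dim generation} implements exactly this check, running it on each of the four ranks and observing that the output is empty supplies the contradiction.

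The main obstacle is not theoretical but bookkeeping: one must be confident that every branch in the finite recursion is inspected and rejected. The $\di_i\ge 15$ bound keeps the search essentially trivial in size, so the machine-verified empty output of Algorithm~\ref{alg: dim generation} for $n\in\{17,19,21,23\}$ gives the contradiction and proves that $\C$ must be pointed in each case.
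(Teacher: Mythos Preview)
Your proposal is correct and follows essentially the same approach as the paper: invoke \cite[Theorem~6.3(b)]{CP} to reduce to the perfect case, then run Algorithm~\ref{alg: dim generation} with $s=1$ for each rank and observe that the output is empty. Your additional explicit enumeration of the admissible values of $m_1$ and the dismissal of $m_1=1$ are sound elaborations, but the paper's own proof simply reports the empty algorithm output without unpacking these details.
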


\begin{proof}
    Based on~\cite[Theorem 6.3 (b)]{CP}, all odd-dimensional MTCs of rank $17$, $19$, $21$, and $23$ must be either pointed or perfect. Running Algorithm~\ref{alg: dim generation} for each of $\rank(\C)=17, 19, 21, 23$ and $\sizeGC=1$ produces no potential solutions, and therefore the perfect case is impossible. The desired result follows.
\end{proof}

Next, we use the above result to show two general results that are used in future sections to classify higher-rank odd-dimensional MTCs. We first extend~\cite[Lemma 6.1]{CP} to odd-dimensional MTCs of higher rank.

\begin{lemma}\label{thm:trivial}
    Let $\mathcal{C}$ be a non-pointed odd-dimensional MTC of rank at most $73$. Then $(\mathcal{C}_\mathrm{ad})_\mathrm{pt}$ is trivial if and only if $\mathcal{C}_\mathrm{pt}$ is trivial.
\end{lemma}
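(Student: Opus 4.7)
The ``if'' direction is immediate: $(\mathcal{C}_\mathrm{ad})_\mathrm{pt} = \mathcal{C}_\mathrm{ad} \cap \mathcal{C}_\mathrm{pt}$ is contained in $\mathcal{C}_\mathrm{pt}$, so if $\mathcal{C}_\mathrm{pt} = \operatorname{Vec}$ then $(\mathcal{C}_\mathrm{ad})_\mathrm{pt}$ is trivial.

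For the converse I would argue by contradiction, assuming $(\mathcal{C}_\mathrm{ad})_\mathrm{pt}$ is trivial while $\mathcal{C}_\mathrm{pt}$ is non-trivial. The first step is to rewrite the hypothesis using the Müger duality $(\mathcal{C}_\mathrm{pt})' = \mathcal{C}_\mathrm{ad}$ recalled in the preliminaries: $\mathcal{C}_\mathrm{pt} \cap \mathcal{C}_\mathrm{ad} = \mathcal{C}_\mathrm{pt} \cap (\mathcal{C}_\mathrm{pt})' = \operatorname{Vec}$ says exactly that $\mathcal{C}_\mathrm{pt}$ is a modular subcategory of $\mathcal{C}$. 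Müger's factorization theorem then yields a braided equivalence $\mathcal{C} \simeq \mathcal{C}_\mathrm{pt} \boxtimes \mathcal{C}_\mathrm{ad}$, and in particular $\rank(\mathcal{C}) = |\mathcal{G}(\mathcal{C})| \cdot \rank(\mathcal{C}_\mathrm{ad})$.

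The second step is to extract structural constraints on $\mathcal{C}_\mathrm{ad}$. Any invertible in $\mathcal{C}_\mathrm{ad}$ sits in $\mathcal{C}_\mathrm{pt} \cap \mathcal{C}_\mathrm{ad} = \operatorname{Vec}$, so $\mathcal{C}_\mathrm{ad}$ is perfect; it is also a modular odd-dimensional fusion category, being the centralizer of the modular subcategory $\mathcal{C}_\mathrm{pt}$ inside $\mathcal{C}$. Non-pointedness of $\mathcal{C}$ forces $\mathcal{C}_\mathrm{ad} \neq \operatorname{Vec}$ and hence $\rank(\mathcal{C}_\mathrm{ad}) \geq 3$, while oddness of $|\mathcal{G}(\mathcal{C})|$ together with $\mathcal{C}_\mathrm{pt}$ non-trivial gives $|\mathcal{G}(\mathcal{C})| \geq 3$.

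I would then split into cases on $|\mathcal{G}(\mathcal{C})|$. If $|\mathcal{G}(\mathcal{C})| \geq 5$, then $\rank(\mathcal{C}_\mathrm{ad})$ is odd and at most $73/5$, hence at most $13$, so by~\cite{BR, CP} the subcategory $\mathcal{C}_\mathrm{ad}$ is pointed; combined with perfectness, $\mathcal{C}_\mathrm{ad} = \operatorname{Vec}$, a contradiction. The hard case is $|\mathcal{G}(\mathcal{C})| = 3$, where the inequality only gives $\rank(\mathcal{C}_\mathrm{ad}) \leq 23$ and~\cite{CP} alone leaves room for $\mathcal{C}_\mathrm{ad}$ to be perfect. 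To close it, I would invoke the rank 17 to 23 classification of Section~\ref{section: ranks 17 to 23}, which shows that every odd-dimensional MTC in this range is pointed; this again forces $\mathcal{C}_\mathrm{ad} = \operatorname{Vec}$. The main obstacle is precisely this last case: the bound $73$ is calibrated so that $|\mathcal{G}(\mathcal{C})| = 3$ places $\rank(\mathcal{C}_\mathrm{ad})$ inside the range handled by Section~\ref{section: ranks 17 to 23}, and accordingly Lemma~\ref{thm:trivial} is to be applied only after that section's classification has been established so as to avoid any circularity.
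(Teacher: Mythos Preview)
Your proof is correct and follows essentially the same approach as the paper's: both hinge on the rank factorization $\rank(\mathcal{C}) = |\mathcal{G}(\mathcal{C})| \cdot \rank(\mathcal{C}_\mathrm{ad})$ together with the classification of odd-dimensional MTCs of rank at most $23$ (Theorem~\ref{thm: pointed 17 to 23} and prior work) to force $\rank(\mathcal{C}_\mathrm{ad}) \geq 25$. The paper quotes the rank equation from~\cite[Lemma 5.2]{CP} rather than deriving it via M\"uger factorization and avoids your case split on $|\mathcal{G}(\mathcal{C})|$ by directly observing that $25 \cdot 3 > 73$, but the substance is identical, and your remark about the dependency on Section~\ref{section: ranks 17 to 23} is apt.
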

\begin{proof}
    Clearly, if $\mathcal{C}_\mathrm{pt}$ is trivial, then $(\mathcal{C}_\mathrm{ad})_\mathrm{pt}$ is trivial. We now show the opposite direction.

    Suppose that $(\mathcal{C}_\mathrm{ad})_\mathrm{pt}$ is trivial. We know $\rank(\mathcal{C}_\mathrm{ad})\rank(\mathcal{C}_\mathrm{pt})=\rank(\mathcal{C})$~\cite[Lemma 5.2]{CP}. Therefore, since $\mathcal{C}$ is non-pointed, $\rank(\mathcal{C}_\mathrm{pt})<\rank(\C)$, so $\rank(\Cad)>1$ and $\Cad$ is not pointed. As a result, by~\cite[Theorem 4.5]{BR},~\cite[Theorem 6.3]{CP}, and Theorem~\ref{thm: pointed 17 to 23}, $\rank(\Cad)$ must be at least $25$. We also know that $\rank(\C)\leq 73$ and is odd, so $\rank(\Cad)$ cannot be a proper divisor of $\rank(\C)$. Therefore, $\rank(\Cad)=\rank(\C)$, so $\C$ is perfect and $\Cpt$ is trivial, as desired.
\end{proof}

We also use the following lemma to discard many cases where $\Cad$ has $9$ simple objects in later sections.

\begin{lemma}\label{lemma: rank Cad 9}
    Let $\C$ be an odd-dimensional MTC such that $\rank(\Cad)=9$. Then the ranks of all components of the universal grading of $\C$ are greater than $1$. 
\end{lemma}
\begin{proof}
    We know that $\sizeGCad$ is odd as it divides $\FPdim(\C)$.  We perform casework on its value. 

    If $\sizeGCad=1$, by Proposition~\ref{proposition: invertible implies equal rank}, all components of the universal grading of $\C$ have rank equal to $\rank(\Cad)=9$. 

    If $\sizeGCad=3$, consider the de-equivariantization $(\Cad)_{\Z_3}$ of $\Cad$ by $\GCad\cong \Z_3$. The three invertible objects of $\Cad$ create one simple object in $(\Cad)_{\Z_3}$ (see Section~\ref{section: equivariantization}). The six non-invertible simple objects in $\Cad$ are either all fixed or all non-fixed under the action of $\GCad$ on the simple objects of $\Cad$. 
    
    In the first case, each non-invertible simple object creates three simple objects in $(\Cad)_{\Z_3}$, so $\rank((\Cad)_{\Z_3})=1+3\cdot 6 = 19$, and $(\Cad)_{\Z_3}$ is pointed by Theorem~\ref{thm: pointed 17 to 23}. This means that $\FPdim((\Cad)_{\Z_3})=19$ and  $\FPdim(\Cad)=3\cdot 19 = 57$. Since $57$ is not a perfect square, no component of the universal grading of $\C$ can have rank $1$. 

    In the second case, each set of $3$ simple objects in $\Cad$ creates one simple object in $(\Cad)_{\Z_3}$, so $\rank((\Cad)_{\Z_3})=1+\frac{6}{3} = 3$, and $(\Cad)_{\Z_3}$ is again pointed by~\cite[Theorem 2.6]{HR}. Then $\FPdim(\Cad) = 3\cdot 3 = 9$. However, $\Cad$ contains $3$ invertible objects and $6$ non-invertible objects, so $\FPdim(\Cad)\geq 3+6\cdot 3^2>9$, which is a contradiction.

    If $\sizeGCad=5$, consider the de-equivariantization $\deeqZ{5}$ of $\Cad$ by $\GCad\cong \Z_5$. The five invertible objects of $\Cad$ create one simple object in the de-equivariantization $\deeqZ{5}$. The four non-invertible simple objects in $\Cad$ must be all fixed under the action of $\GCad$ on the simple objects of $\Cad$, and each of them creates five simple objects in $\deeqZ{5}$. As a result, $\rank(\deeqZ{5})=1+5\cdot 4=21$, and by Theorem~\ref{thm: pointed 17 to 23}, $\deeqZ{5}$ is pointed. This means that $\FPdim(\deeqZ{5})=21$ and $\FPdim(\Cad)=5\cdot 21=105$, which is not a perfect square, and no component of the universal grading of $\C$ can have rank $1$. 

    If $\sizeGCad=7$, a similar argument holds. In the the de-equivariantization $\deeqZ{7}$ of $\Cad$ by $\GCad\cong \Z_7$, the seven invertible objects of $\Cad$ create one simple object in $\deeqZ{7}$, and the two non-invertible simple objects in $\Cad$ each create seven simple objects in $\deeqZ{7}$. Hence, $\rank(\deeqZ{7})=1+7\cdot 2=15$, so $\deeqZ{7}$ is pointed~\cite[Theorem 6.3 (a)]{CP}. This means that $\FPdim(\deeqZ{7})=15$ and $\FPdim(\Cad)=7\cdot 15=105$, which is not a perfect square, so no component of the universal grading of $\C$ can have rank $1$.

    Finally, consider $\sizeGCad=9$. In this case $\FPdim(\Cad)=9$, so the Frobenius-Perron dimension of every component of the universal grading must be $9$. As a result, each component either contains exactly $9$ invertible objects or exactly $1$ object of dimension $3$, and hence no component contains at least $3$ non-invertible objects. But this contradicts~\cite[Lemma 5.1 (a)]{CP}, as $3$ divides $\sizeGCad=9$, so this case is impossible.
\end{proof}

\section{Odd-dimensional MTCs of rank 25}\label{sec: rank 25}

In this section, we prove that all odd-dimensional MTCs of rank $25$ must be pointed, perfect, or equivalent to $\Rep(D^\omega(\mathbb{Z}_7\rtimes\mathbb{Z}_3))$. Let $\C$ be an odd-dimensional MTC of rank $25$. We begin by restricting the possible values of $\sizeGC$, as a corollary of Lemma~\ref{lemma: rank linear combination}.

\begin{corollary}\label{corollary: count invertibles 25}
    If $\mathcal{C}$ is an odd-dimensional MTC of rank $25$, then we must have $\sizeGC=1, 3, 5, 9, 17, 25$.
\end{corollary}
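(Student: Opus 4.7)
The plan is a direct enumeration based on Lemma~\ref{lemma: rank linear combination}. First I would record two general constraints on $\sizeGC$: it is odd because $\sizeGC = \FPdim(\Cpt)$ divides $\FPdim(\C)$ and $\FPdim(\C)$ is odd, and it satisfies $\sizeGC \leq 25$ since $\Cpt$ is a fusion subcategory of $\C$. Invoking Lemma~\ref{lemma: rank linear combination} then produces nonnegative integers $m$ and $k$ with $m \geq 1$ (the adjoint component is always non-empty) such that $25 = m \cdot \sizeGC + 8k$.

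It then remains to check, for each odd value $\sizeGC \in \{1, 3, 5, 7, 9, 11, 13, 15, 17, 19, 21, 23, 25\}$, whether such $m$ and $k$ exist. The six affirmative cases are realized by the identities $25 = 25 \cdot 1 + 8 \cdot 0$, $25 = 3 \cdot 3 + 8 \cdot 2$, $25 = 5 \cdot 5 + 8 \cdot 0$, $25 = 1 \cdot 9 + 8 \cdot 2$, $25 = 1 \cdot 17 + 8 \cdot 1$, and $25 = 1 \cdot 25 + 8 \cdot 0$, giving precisely the list claimed in the corollary. For each remaining candidate $\sizeGC \in \{7, 11, 13, 15, 19, 21, 23\}$, I would run through every admissible $m \geq 1$ with $m \cdot \sizeGC \leq 25$ and verify that $25 - m \cdot \sizeGC$ is never a nonnegative multiple of $8$, ruling these values out.

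The argument is purely arithmetic, so there is no real obstacle to overcome; the entire content of the corollary lies in Lemma~\ref{lemma: rank linear combination}. I expect this short enumeration to serve as the skeleton for the detailed rank~$25$ analysis in the following subsections, where each of the six surviving values of $\sizeGC$ must be handled individually using finer tools such as Algorithm~\ref{alg: Cad dim generation} and Proposition~\ref{proposition: semidirect product}.
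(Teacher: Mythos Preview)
Your proposal is correct and follows exactly the approach the paper intends: the paper states this corollary with no proof beyond citing Lemma~\ref{lemma: rank linear combination}, and your enumeration is precisely the routine check that the citation implies. The only minor remark is that your justification ``$\sizeGC \leq 25$ since $\Cpt$ is a fusion subcategory of $\C$'' would be clearer as ``$\sizeGC = \rank(\Cpt) \leq \rank(\C) = 25$,'' but this is cosmetic.
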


We continue by further reducing the number of possible values of $\sizeGC$.

\begin{proposition}\label{prop: 1 3 25 invertibles rank 25}
    Let $\mathcal{C}$ be an odd-dimensional MTC of rank $25$. Then $\mathcal{C}$ is either pointed, perfect, or has $3$ invertible objects. If $\C$ has $3$ invertible objects, then the ranks of the components of the universal grading must be $19$, $3$, and $3$.
\end{proposition}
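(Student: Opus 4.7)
My plan is to case-split on $\sizeGC$ using Corollary~\ref{corollary: count invertibles 25}, which restricts $\sizeGC$ to $\{1,3,5,9,17,25\}$. The values $1$ and $25$ correspond to the perfect and pointed conclusions, so the remaining work is to eliminate $\sizeGC\in\{5,9,17\}$ and, for $\sizeGC=3$, to force the universal grading to have ranks $(19,3,3)$. In every remaining case the first step is to note that $\Cadpt$ is non-trivial: otherwise Lemma~\ref{thm:trivial} would force $\Cpt$ trivial, contradicting $\sizeGC\geq 3$. Hence $\sizeGCad$ is a non-trivial divisor of $\sizeGC$; in each prime case this gives $\sizeGCad=\sizeGC$ and $\Cpt\subseteq\Cad$ by Remark~\ref{remark: Cpt subset Cad}.

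For $\sizeGC = 17$, the containment $\Cpt\subseteq\Cad$ forces $\rank(\Cad)\geq 17$, while the $16$ non-adjoint components of the universal grading each contribute at least $1$ to $\rank(\C)=25$; this is already impossible. For $\sizeGC=5$, Lemma~\ref{lemma: rank not divisible by p} forces each non-adjoint component to have rank divisible by $5$, which together with $\rank(\Cad)\geq 5$ pins the grading pattern to $(5,5,5,5,5)$; then $\FPdim(\Cad)=5$, yet each non-adjoint component has FP-dimension $5$ while containing five non-invertible simples of FP-dimension at least $3$, a contradiction.

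For $\sizeGC=9$, I split on $\sizeGCad\in\{3,9\}$. When $\sizeGCad=3$, Corollary~\ref{corollary: three components rank Cad} yields three components of rank $\rank(\Cad)$ (the adjoint plus the dual pair containing the six non-adjoint invertibles), while the remaining six components form three dual pairs without invertibles. The identity $3\rank(\Cad)+2\sum r_i=25$ with $\rank(\Cad)\geq 3$ odd restricts $\rank(\Cad)\in\{3,5\}$; each is excluded by computing $\FPdim(\Cad)$, using $d^2\mid\FPdim(\C)$ to confine the non-invertible dimensions, and then verifying that the remaining grading components cannot be populated. When $\sizeGCad=9$, we have $\rank(\Cad)\geq 9$; Lemma~\ref{lemma: rank more than 1} forces at least one non-adjoint dual pair of rank $\geq 3$; but combining the bound $\sum r_i\leq 8$ with the divisibility condition $d^2\mid 9\FPdim(\Cad)$ leaves no way to assemble the non-adjoint components, which I will verify in the finitely many subcases $\rank(\Cad)\in\{9,11,13,15,17\}$.

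Finally, for $\sizeGC=3$ the same reasoning gives $\sizeGCad=3$ and $\Cpt\subseteq\Cad$, and Lemma~\ref{lemma: rank not divisible by p} forces the common rank $r$ of the two dual non-adjoint components to be a positive multiple of $3$; combined with $\rank(\Cad)=25-2r\geq 3$, this gives $r\in\{3,6,9\}$. I handle $r=6$ and $r=9$ by feeding $\rank(\Cad)\in\{13,7\}$ together with $\sizeGCad=3$ into Algorithm~\ref{alg: Cad dim generation} and filtering its candidate dimension lists through the divisibility constraint $d^2\mid\FPdim(\C)$ and Lemma~\ref{thm:fixeddims} with $p=3$; only $r=3$, i.e.\ the grading pattern $(19,3,3)$, survives. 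I expect the main obstacle to be exactly these last two cases: the algorithm returns several dimension candidates, and eliminating each requires carefully combining the multiset structure from Lemma~\ref{thm:fixeddims} with FP-dimension arithmetic forced by the non-adjoint grading components.
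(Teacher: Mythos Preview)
Your overall strategy is sound and several of your subcases (notably $\sizeGC=17$ and $\sizeGC=5$) go through cleanly. But you are missing the one structural fact that makes the paper's proof a few lines long: since every simple object has odd Frobenius--Perron dimension, $d^2\equiv 1\pmod 8$ for each simple, so $\FPdim(\C_g)\equiv\rank(\C_g)\pmod 8$ for every component, and combined with the equality $\FPdim(\C_g)=\FPdim(\Cad)$ this forces \emph{all} component ranks to be congruent modulo $8$. The paper enumerates the handful of rank-multisets this allows for each $\sizeGC$ and kills them with Lemmas~\ref{lemma: rank more than 1}--\ref{lemma: odd number of components of rank} directly; for $\sizeGC=9$ only $\{17,1,\dots,1\}$ and $\{9,9,1,\dots,1\}$ survive the mod-$8$ sieve, and for $\sizeGC=3$ only $\{19,3,3\}$ and $\{11,11,3\}$ do.

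Without this congruence your case analysis balloons, and in at least one place the argument you state does not actually close the case. For $\sizeGC=9$, $\sizeGCad=9$ and $\rank(\Cad)\in\{11,13,15\}$, you claim the divisibility condition $d^2\mid 9\FPdim(\Cad)$ rules out populating the non-adjoint components. It does not: what is really needed is that a rank-$1$ component would force $\FPdim(\Cad)=9+2\sum e_i^2$ to be a perfect square, and for odd $e_i$ this quantity is $\equiv 1+2k\pmod 8$ with $k\in\{1,2,3\}$, hence never $\equiv 0,1,4$. That is again the mod-$8$ obstruction, not divisibility. Once you observe there are no rank-$1$ components, the remaining pigeonhole ($4$ dual pairs of rank $\geq 2$ but total pair-rank $\leq 7,6,5$) finishes it. Similarly, for $\sizeGC=3$ the cases $r=6$ and $r=9$ die instantly from $r\equiv\rank(\Cad)\pmod 8$; running Algorithm~\ref{alg: Cad dim generation} and then filtering through Lemma~\ref{thm:fixeddims} is a detour (it works only because the algorithm's initiation already encodes the mod-$8$ constraint). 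I recommend inserting the component-rank congruence at the start and reorganizing around it; most of your ad hoc subcases then disappear.
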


\begin{proof}
    We perform casework based on the possible values of $\sizeGC$, from Corollary~\ref{corollary: count invertibles 25}. Then, in each subcase, by~\cite[Proposition 8.20]{ENO1} and~\cite[Lemma 5.4]{CP}, we know that $\rank(\mathcal{C}_g)\equiv\rank(\mathcal{C}_h)\pmod{8}$ for all $g, h\in\mathcal{G}(\mathcal{C})$. This combined with $\sum_{g\in\mathcal{G}(\mathcal{C})}\rank(\mathcal{C}_g)=25$ allows us to enumerate all possible multisets of the ranks of the components of the universal grading for each $\sizeGC$.
    
    For every $\sizeGC>1$, we know that $\Cpt$ is non-trivial, and therefore, by Lemma~\ref{thm:trivial}, $\Cadpt$ is also non-trivial. As a result, we can use Lemmas~\ref{lemma: rank more than 1},~\ref{lemma: rank not divisible by p}, and~\ref{lemma: odd number of components of rank} to discard cases.

    \textbf{Case 1: $\sizeGC=17$.} The ranks of the components of the universal grading must be $[(1, 9), (16, 1)]$ (see Section~\ref{section: universal grading} for a description of the notation. In this case, there is $1$ component of rank $9$, and $16$ components of rank $1$). This case is therefore discarded by Lemma~\ref{lemma: rank more than 1}.

    \textbf{Case 2: $\sizeGC=9$.} The ranks of the components of the universal grading must be either $[(1, 17), (8, 1)]$ or $[(2, 9), (7, 1)]$. We discard both possibilities by Lemma~\ref{lemma: rank more than 1}. 

    \textbf{Case 3: $\sizeGC=5$.} The ranks of the components of the universal grading must be $[(5, 5)]$. We know that $\mathcal{G}(\mathcal{C})\cong\mathbb{Z}_5$. As $\mathcal{G}(\mathcal{C}_\mathrm{ad})$ is a subgroup of $\mathcal{G}(\mathcal{C})$ and $|\mathcal{G}(\mathcal{C}_\mathrm{ad})|\neq 1$ by Lemma~\ref{thm:trivial}, we know that $|\mathcal{G}(\mathcal{C}_\mathrm{ad})|=5$. But $\rank(\mathcal{C}_\mathrm{ad})=5$, so all simple objects of $\mathcal{C}_\mathrm{ad}$ must be invertible, i.e., their Frobenius-Perron dimensions are all $1$. As a result, $\FPdim(\mathcal{C}_\mathrm{ad})=5$, so by~\cite[Proposition 8.20]{ENO1}, the Frobenius-Perron dimension of all the components of the universal grading must be $5$. This is a contradiction, as the square of the Frobenius-Perron dimension of a non-invertible object of $\mathcal{C}$ must be at least $9$.

    \textbf{Case 4: $|\mathcal{G}(\mathcal{C})|=3$.} The ranks of the components of the universal grading must be either $[(1, 19), (2, 3)]$ or $[(2, 11), (1, 3)]$. The latter case is discarded by Lemma~\ref{lemma: rank not divisible by p}.

    This leaves $\sizeGC=1, 3, 25$ as possible, with the desired conditions for the case with $\sizeGC=3$, completing the proof.
\end{proof}

Next, we show that an MTC of rank $25$ with $\sizeGC=3$ must be equivalent to the category $\Rep(D^\omega(\mathbb{Z}_7\rtimes\mathbb{Z}_3))$ for some $3$-cocycle $\omega$. Let $\C$ be an MTC of rank $25$ with $\sizeGC=3$. Let the elements of $\mathcal{G}(\mathcal{C})$ be $1$, $g$, and $g^2$. Then, we know that $\rank(\mathcal{C}_\mathrm{ad})=19$, $\rank(\mathcal{C}_g)=3$, and $\rank(\mathcal{C}_{g^2})=3$. 

\begin{lemma}\label{thm:deequiv}
    Let $\mathcal{C}$ be an odd-dimensional MTC of rank $25$ with $3$ invertible objects. Suppose that there are $t$ simple objects in $\mathcal{C}$ of Frobenius-Perron dimension $3$, and that $r$ of them are not fixed by $\mathcal{G}(\mathcal{C})$. Then $t$ is a multiple of $6$, and $3t-3r+1$ divides $\frac{\FPdim(\mathcal{C})}{9}$.
\end{lemma}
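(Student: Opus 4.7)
The plan is to exploit the Tannakian structure of the M\"uger center of $\Cad$ via de-equivariantization, thereby translating counts of FPdim-$3$ simples in $\C$ into information about invertibles in a smaller modular category.

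First, by Remark~\ref{remark: Cpt subset Cad} combined with Lemma~\ref{thm:trivial}, since $\sizeGC=3$ is prime and $\Cadpt$ must be non-trivial whenever $\Cpt$ is, one obtains $\Cpt\subseteq\Cad$ and $\Cadpt=\Cpt\simeq\Rep(\Z_3)$, which is the M\"uger center of $\Cad$. The de-equivariantization $\mathcal{D}:=(\Cad)_{\Z_3}$ is thus a modular category with $\FPdim(\mathcal{D})=\FPdim(\C)/9$, and there is an equivalence $\Cad\simeq\mathcal{D}^{\Z_3}$. Each simple of $\Cad$ corresponds to a pair $(Y,\rho)$ where $Y$ is a $\Z_3$-orbit of simples of $\mathcal{D}$ and $\rho$ is an irrep of its stabilizer; the action of $\GC$ on $\Cad$ by tensor with $\Cpt$ fixes $(Y,\rho)$ precisely when $Y$ is a free $\Z_3$-orbit of size $3$ (and $\rho$ is then forced to be trivial).

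For the divisibility statement, I would identify the fixed FPdim-$3$ simples in $\Cad$---which, by Proposition~\ref{lemma: fixed simple}, account for all fixed FPdim-$3$ simples of $\C$---with the free $\Z_3$-orbits of invertibles in $\mathcal{D}$: a free orbit of three invertibles in $\mathcal{D}$ yields exactly one $\GC$-fixed simple in $\Cad$ of FPdim $3$, and conversely. Since the only $\Z_3$-fixed invertible in $\mathcal{D}$ is the unit (whose three preimages in $\Cad$ are precisely the invertibles of $\Cpt$), the non-identity invertibles of $\mathcal{D}$ split into $t-r$ free orbits of size $3$, giving
\[|\mathcal{G}(\mathcal{D})|=1+3(t-r).\]
The divisibility $3t-3r+1\mid\FPdim(\C)/9$ then follows from the general fact that $|\mathcal{G}(\mathcal{D})|$ divides $\FPdim(\mathcal{D})=\FPdim(\C)/9$.

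For the multiplicity claim, I would show that $t$ is a multiple of $6$ by orbit-and-duality analysis. Non-fixed FPdim-$3$ simples come in $\GC$-orbits of size $3$ whose dual orbits are disjoint (since $X^*\not\cong g^iX$ for non-invertible $X$ in an odd-dimensional MTC, as in the proof of Lemma~\ref{thm:fixeddims}), so the non-fixed count $r$ is already a multiple of $6$. The fixed FPdim-$3$ simples come in dual pairs, so $t-r$ is at least even; the remaining factor of $3$ should come from the interaction of the induced $\Z_3$-action with duality on $\mathcal{G}(\mathcal{D})$. For a fixed FPdim-$3$ simple $X$, dimension and duality constraints force a tensor relation of the form $X\otimes X^*=\mathbf{1}\oplus g\oplus g^2\oplus Y\oplus Y^*$, which determines a dual pair $\{Y,Y^*\}$ of fixed FPdim-$3$ simples from each $\{X,X^*\}$. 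The resulting combinatorial structure on dual pairs, together with the $\Z_3$-module structure on $\mathcal{G}(\mathcal{D})$, is what I expect to yield divisibility of $t-r$ by $3$; this final step is the main obstacle of the argument.
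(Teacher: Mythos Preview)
Your divisibility argument is correct and is essentially the paper's own argument, viewed from the equivariantization side of the equivalence $\Cad\simeq\mathcal{D}^{\Z_3}$: the paper counts invertibles in $(\Cad)_{\Z_3}$ directly (the orbit $\{1,g,g^2\}$ contributes one, and each $\GC$-fixed FPdim-$3$ simple contributes three), obtaining $|\mathcal{G}(\mathcal{D})|=1+3(t-r)$, just as you do. Your verification that the only $\Z_3$-fixed invertible in $\mathcal{D}$ is the unit (because its preimages exhaust $\GCad$) is exactly the missing piece that makes the count sharp, and the paper implicitly uses the same reasoning. One small point you omit that the paper handles first: you should check that \emph{all} FPdim-$3$ simples of $\C$ lie in $\Cad$ (not merely the fixed ones), since otherwise $t-r$ might not equal the number of fixed FPdim-$3$ simples in $\Cad$. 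The paper dispatches this by noting that if some FPdim-$3$ simple were in $\C_g$, then $\FPdim(\Cad)=\FPdim(\C_g)=27$, contradicting $\rank(\Cad)=19$ with $16$ non-invertible simples.

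The ``main obstacle'' you identify is a phantom: the statement as printed contains a typo, and the paper's proof establishes that $r$, not $t$, is a multiple of $6$. Indeed, the claim ``$t$ is a multiple of $6$'' is false --- the realizable case $\Rep(D^\omega(\Z_7\rtimes\Z_3))$ itself has $t=16$ --- and every subsequent application of the lemma in the paper uses only that $r\equiv 0\pmod{6}$ together with the divisibility $3t-3r+1\mid\FPdim(\C)/9$. Your orbit-plus-duality argument for $6\mid r$ is precisely what the paper gives, so once you drop the attempt to force $6\mid t$, your proof is complete and matches the paper's.
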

\begin{proof}
    Clearly, all the simple objects of Frobenius-Perron dimension $3$ are in $\Cad$ (otherwise, $\FPdim(\Cad)=\FPdim(\C_g)=27$, while $\Cad$ has $16$ non-invertible simple objects, which is impossible). Consider the de-equivariantization $(\Cad)_{\mathbb{Z}_3}$ of $\Cad$ by $\mathbb{Z}_3$, see Section~\ref{section: equivariantization}. We know $\FPdim((\Cad)_{\mathbb{Z}_3})=\frac{\FPdim(\C)}{9}$. We count the number of invertible objects in $(\Cad)_{\mathbb{Z}_3}$, which we know must be a divisor of $\FPdim((\Cad)_{\mathbb{Z}_3})$.

    Consider the simple objects in $\Cad$. In order for an invertible object in $(\Cad)_{\mathbb{Z}_3}$ to be created by a simple object $X\in\Cad$, either $X$ is not fixed and $\FPdim(X)=1$, or $X$ is fixed and $\FPdim(X)=3$. For the former case, the only invertible object is created by $1, g, g^2\in\Cad$, resulting in one invertible object. For the latter case, we know there are $t$ simple objects of Frobenius-Perron dimension $3$ in $\Cad$. As a result, as the non-fixed objects come in sets of $6$ (an orbit of size $3$ of the action by $\GC$ and the dual of the orbit), $r$ is a multiple of $6$, and there are $t-r$ fixed simple objects of Frobenius-Perron dimension $3$ in $\Cad$. But each of these objects gives rise to $3$ simple objects of Frobenius-Perron dimension $1$ in $(\Cad)_{\mathbb{Z}_3}$, resulting in an additional $3t-3r$ invertible objects.

    Hence, there are a total of $3t-3r+1$ invertible objects in $(\Cad)_{\mathbb{Z}_3}$. As $(\Cad)_{\mathbb{Z}_3}$ is modular, this must divide $\FPdim((\Cad)_{\mathbb{Z}_3})=\frac{\FPdim(\C)}{9}$, as desired.
\end{proof}

Let $\C$ be a fusion category, and let its non-invertible simple objects be $X_1, X_1^*, \dots, X_k, X_k^*$ and their Frobenius-Perron dimensions be $\di_1, \di_1, \dots, \di_k, \di_k$. Recall that we call the array $[\di_1, \di_2, \dots, \di_k]$ the \emph{dimension array} of $\C$.

We now generate all potential dimension arrays of $\C$ using Algorithm~\ref{alg: dim generation} for $\rank(\C)=25$ and $\GC=3$. This produces the cases listed in Table~\ref{table: rank 25 dim arrays}, where the columns numbered $1$ through $11$ represent the elements of the array, i.e., the dimensions of the non-invertible simple objects, excluding duplicates due to duals.

We discard all of these potential arrays on a case-by-case basis, except for the one in row 34, which is realizable as $\Rep(D^\omega(\mathbb Z_7\rtimes \mathbb Z_3))$.

\begin{table}
    \centering
    \scalebox{0.8}{
    \begin{tabular}{ c | c | c | c | c | c | c | c | c | c | c | c | c }
        \# & $\FPdim(\C)$ & 1 & 2 & 3 & 4 & 5 & 6 & 7 & 8 & 9 & 10 & 11 \\
        \hline
        1 & 18275625 & 1425 & 1425 & 1425 & 1425 & 855 & 475 & 225 & 75 & 45 & 19 & 5 \\
        2 & 95355225 & 3255 & 3255 & 3255 & 3255 & 1953 & 1085 & 465 & 279 & 105 & 31 & 5 \\
        3 & 300155625 & 5775 & 5775 & 5775 & 5775 & 3465 & 1925 & 825 & 385 & 275 & 231 & 75 \\
        4 & 99225 & 105 & 105 & 105 & 105 & 63 & 35 & 15 & 7 & 5 & 3 & 3 \\
        5 & 300155625 & 5775 & 5775 & 5775 & 5775 & 3465 & 1925 & 825 & 385 & 231 & 225 & 175 \\
        6 & 52490025 & 2415 & 2415 & 2415 & 2415 & 1449 & 805 & 315 & 207 & 161 & 21 & 7 \\
        7 & 6125625 & 825 & 825 & 825 & 825 & 495 & 275 & 99 & 99 & 5 & 5 & 3 \\
        8 & 6125625 & 825 & 825 & 825 & 825 & 495 & 275 & 99 & 75 & 55 & 33 & 11 \\
        9 & 164025 & 135 & 135 & 135 & 135 & 81 & 45 & 15 & 15 & 5 & 5 & 5 \\
        10 & 108056025 & 3465 & 3465 & 3465 & 3465 & 2079 & 1155 & 385 & 315 & 315 & 11 & 7 \\
        11 & 99225 & 105 & 105 & 105 & 105 & 63 & 35 & 9 & 9 & 9 & 7 & 5 \\
        12 & 108056025 & 3465 & 3465 & 3465 & 3465 & 2079 & 945 & 693 & 385 & 385 & 105 & 11 \\
        13 & 1334025 & 385 & 385 & 385 & 385 & 231 & 105 & 77 & 35 & 35 & 35 & 11 \\
        14 & 7868025 & 935 & 935 & 935 & 935 & 561 & 255 & 165 & 165 & 51 & 17 & 5 \\
        15 & 1334025 & 385 & 385 & 385 & 385 & 231 & 105 & 55 & 55 & 35 & 35 & 35 \\
        16 & 38025 & 65 & 65 & 65 & 65 & 39 & 15 & 13 & 13 & 3 & 3 & 3 \\
        17 & 27225 & 55 & 55 & 55 & 55 & 33 & 11 & 11 & 11 & 5 & 5 & 3 \\
        18 & 1625625 & 425 & 425 & 425 & 425 & 255 & 85 & 85 & 75 & 51 & 51 & 3 \\
        19 & 5625 & 25 & 25 & 25 & 25 & 15 & 5 & 5 & 3 & 3 & 3 & 3 \\
        20 & 31585464729 & 59241 & 59241 & 59241 & 59241 & 25389 & 25389 & 19747 & 8463 & 1953 & 403 & 49 \\
        21 & 95355225 & 3255 & 3255 & 3255 & 3255 & 1395 & 1395 & 1085 & 465 & 105 & 31 & 5 \\
        22 & 23532201 & 1617 & 1617 & 1617 & 1617 & 693 & 693 & 539 & 231 & 49 & 21 & 11 \\
        23 & 1432809 & 399 & 399 & 399 & 399 & 171 & 171 & 133 & 57 & 9 & 7 & 7 \\
        24 & 52490025 & 2415 & 2415 & 2415 & 2415 & 1035 & 1035 & 805 & 315 & 161 & 21 & 7 \\
        25 & 99225 & 105 & 105 & 105 & 105 & 45 & 45 & 35 & 9 & 9 & 7 & 5 \\
        26 & 9801 & 33 & 33 & 33 & 33 & 11 & 11 & 11 & 9 & 9 & 3 & 3 \\
        27 & 13689 & 39 & 39 & 39 & 39 & 13 & 13 & 13 & 9 & 9 & 9 & 3 \\
        28 & 2025 & 15 & 15 & 15 & 15 & 5 & 5 & 5 & 3 & 3 & 3 & 3 \\
        29 & 38025 & 65 & 65 & 65 & 39 & 39 & 39 & 39 & 15 & 3 & 3 & 3 \\
        30 & 27225 & 55 & 55 & 55 & 33 & 33 & 33 & 33 & 11 & 5 & 5 & 3 \\
        31 & 1625625 & 425 & 425 & 425 & 255 & 255 & 255 & 255 & 75 & 51 & 51 & 3 \\
        32 & 5625 & 25 & 25 & 25 & 15 & 15 & 15 & 15 & 3 & 3 & 3 & 3 \\
        33 & 2025 & 15 & 15 & 15 & 9 & 9 & 9 & 5 & 5 & 5 & 3 & 3 \\
        34 & 441 & 7 & 7 & 7 & 3 & 3 & 3 & 3 & 3 & 3 & 3 & 3 \\
        35 & 2025 & 15 & 15 & 9 & 9 & 9 & 9 & 9 & 9 & 5 & 5 & 5
    \end{tabular}}
    \caption{Potential dimension arrays for MTC $\C$ of rank $25$ with $3$ invertible objects. All discarded except row 34.}
    \label{table: rank 25 dim arrays}
\end{table}

\begin{lemma}\label{lemma: FPdim array}
    Let $\mathcal{C}$ be an odd-dimensional MTC of rank $25$ with $3$ invertible objects. Then the dimension array of $\mathcal{C}$ is $[7, 7, 7, 3, 3, 3, 3, 3, 3, 3, 3]$, with the Frobenius-Perron dimensions of all the non-invertible simple objects in $\Cad$ equal to $3$ and the Frobenius-Perron dimensions of all simple objects outside of $\Cad$ equal to $7$.
\end{lemma}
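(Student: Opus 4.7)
The plan is to start from the $35$ candidate dimension arrays produced by Algorithm~\ref{alg: dim generation} with $(\rank(\C), \sizeGC) = (25,3)$ (tabulated above) and eliminate every row except row $34$. Throughout, I use that $\sizeGC = 3$ is prime and $\Cadpt$ is non-trivial (so $\Cpt \subseteq \Cad$ by Lemma~\ref{thm:trivial}), which enables Corollary~\ref{corollary: same dim outside Cad}, Remark~\ref{remark: fixed fpdim divisible by p}, and modularity of the de-equivariantization $(\Cad)_{\Z_3}$. An initial consequence, combining Corollary~\ref{corollary: same dim outside Cad} with~\cite[Proposition 8.20]{ENO1}, is that the six non-adjoint simples share a common Frobenius-Perron dimension $d = \sqrt{\FPdim(\C)/9}$, which both singles out which three dual pairs must lie outside $\Cad$ and determines the configuration of $\Cad$ in each remaining row.

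First, Lemma~\ref{thm:fixeddims} forces every FPdim coprime to $3$ to appear in a multiple of $6$ non-invertible simple objects. Applied to the table this alone eliminates all rows except $\{9,26,27,28,29,31,32,33,34,35\}$. Row $35$ is then ruled out because the required value $d = 15$ needs six FPdim-$15$ objects while only four are available. For each remaining row containing FPdim-$3$ objects, these objects must lie in $\Cad$ (since $d \neq 3$), so Lemma~\ref{thm:deequiv} gives the divisibility $3t - 3r + 1 \mid \FPdim(\C)/9$ for some $r \in \{0, 6, \dots, t\}$; scanning all admissible $r$ eliminates rows $26, 27, 31$, and $33$.

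For the four remaining rows $9, 28, 29, 32$ I build the modular category $(\Cad)_{\Z_3}$ explicitly. Remark~\ref{remark: fixed fpdim divisible by p} together with Lemma~\ref{thm:deequiv} pins down the number of $\GC$-fixed simples of each dimension in $\Cad$ up to a small number of subcases, from which the rank, number of invertibles, and multiset of FPdims of $(\Cad)_{\Z_3}$ follow directly. Each case then yields a contradiction: row $9$ and one subcase of row $29$ produce a perfect MTC containing simples of prime-power FPdim (for instance $5$, $13$, or $27$), violating~\cite[Corollary 7.2]{ENO2}; the other subcase of row $29$ produces a perfect MTC of rank $17$, contradicting Theorem~\ref{thm: pointed 17 to 23}; and rows $28$ and $32$ produce $(\Cad)_{\Z_3}$ with $25$ invertibles spread over $25$ universal-grading components, so each component contains exactly one invertible but the residual FPdim in the component is strictly less than the square of every non-invertible FPdim that must appear, a contradiction. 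Only row $34$ survives; with $\FPdim(\C)=441$ we get $d = 7$, placing the three FPdim-$7$ pairs outside $\Cad$ and the eight FPdim-$3$ pairs inside, exactly as stated. The main obstacle is the final step: one must carefully track the $\GC$-orbit structure on the simples of $\Cad$ in each admissible subcase before computing the rank, invertible count, and FPdim profile of $(\Cad)_{\Z_3}$ that is needed to invoke~\cite[Corollary 7.2]{ENO2}, Theorem~\ref{thm: pointed 17 to 23}, or the universal grading obstruction.
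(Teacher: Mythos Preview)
Your overall strategy matches the paper's: reduce the 35 candidate arrays using Lemma~\ref{thm:fixeddims}, Corollary~\ref{corollary: same dim outside Cad}, and Lemma~\ref{thm:deequiv}, then eliminate the survivors $9$, $28$, $29$, $32$ by analyzing $(\Cad)_{\Z_3}$. Your alternatives for rows $9$ and $29$ (invoking~\cite[Corollary 7.2]{ENO2} on a perfect MTC with prime-power-dimensional simples, or Theorem~\ref{thm: pointed 17 to 23} at rank $17$) are valid and arguably cleaner than the paper's direct fusion-rule computation for row $9$.

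There is, however, a genuine gap in your treatment of row $32$. Your claim that ``25 invertibles spread over 25 universal-grading components'' forces exactly one invertible per component is not justified: nothing prevents all $25$ invertibles from concentrating in the adjoint component. For row $28$ this does not matter, since each component has FPdim $9$ and $5^2=25>9$, so no dim-$5$ simple fits anywhere regardless of how the invertibles distribute. For row $32$ subcase A (six dim-$15$ objects non-fixed) the same reasoning works because $15^2=225>25$. But in row $32$ subcase B (all eight dim-$15$ objects fixed), $(\Cad)_{\Z_3}$ has rank $49$, FPdim $625$, $25$ invertibles, and all non-invertibles of dimension $5$; the configuration with adjoint component $=\{25\text{ invertibles}\}$ and each of the remaining $24$ components $=\{\text{one dim-}5\text{ simple}\}$ is fully consistent with every FPdim constraint you impose. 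The paper closes this subcase by listing the three admissible rank multisets $\{25,1,\dots,1\}$, $\{17,9,1,\dots,1\}$, $\{9,9,9,1,\dots,1\}$ for the universal grading of $(\Cad)_{\Z_3}$, discarding the first two via Lemmas~\ref{thm:trivial} and~\ref{lemma: rank more than 1}, and the third by counting the $24$ dim-$5$ simples against the $22$ rank-$1$ components.
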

\begin{proof}
     Consider the potential dimension arrays listed by Table~\ref{table: rank 25 dim arrays}. By Corollary~\ref{corollary: same dim outside Cad}, we can eliminate case $35$, as we know that $\FPdim(\mathcal{C}_g)=\frac{1}{3}\FPdim(\mathcal{C})=675$, and hence the Frobenius-Perron dimension of all the simple objects outside of $\Cad$ must be $\sqrt{\frac{675}{3}}=15$, but there are only $4$ simple objects with Frobenius-Perron dimension $15$.

     Then, using Lemma~\ref{thm:fixeddims}, we can eliminate all remaining cases except for cases $9$, $26$, $27$, $28$, $29$, $31$, $32$, $33$, and $34$. Using Lemma~\ref{thm:deequiv}, we can eliminate cases $26$, $27$, $31$, and $33$, leaving cases $9$, $28$, $29$, $32$, and $34$.

     We now individually rule out cases $9$, $28$, $29$, and $32$. For these cases, we can easily determine the Frobenius-Perron dimensions of any simple object $Z$ outside of $\Cad$, again using $\FPdim(Z)^2=\frac{1}{3}\FPdim(\mathcal{C}_g)=\frac{1}{9}\FPdim(\mathcal{C}_\mathrm{ad})$ and Corollary~\ref{corollary: same dim outside Cad}.

     \textbf{Case $9$: $[135, 135, 135, 135, 81, 45, 15, 15, 5, 5, 5]$}. Using the same reasoning as for case $35$, the Frobenius-Perron dimensions of all of the simple objects outside of $\Cad$ must be $135$. As a result, the Frobenius-Perron dimensions from the above multiset that belong to $\Cad$ are $135, 81, 45, 15, 15, 5, 5, 5$. Notice that the objects with Frobenius-Perron dimension $5$ cannot be fixed by $g$ per Remark~\ref{remark: fixed fpdim divisible by p}, as they are not divisible by $3$. Label the simple objects $X_1, X_2, \dots, X_8$ from right-to-left in the array (hence, $\FPdim(X_1)=5, \FPdim(X_2)=5, \dots, \FPdim(X_7)=81, \FPdim(X_8)=135$). Consider the decomposition of $X_1\otimes X_1^*$, where we know that the coefficients of $X_i$ and $X_i^*$ are equal as $\mathcal{C}$ is a braided category:
     \[X_1\otimes X_1^*=1\oplus\bigoplus_{i=1}^8 N_i(X_i\oplus X_i^*).\]
     Taking the Frobenius-Perron dimension of both sides gives
     \[25=1+\sum_{i=1}^8 2N_i\FPdim(X_i).\]
     Clearly, $N_6$, $N_7$, and $N_8$ will be $0$, as $\FPdim(X_8)>\FPdim(X_7)>\FPdim(X_6)>25$. But as all of $X_1, \dots, X_5$ are multiples of $5$, taking the above equation mod $5$ gives us $0\equiv 1\pmod{5}$, a contradiction. As a result, we rule out case $9$.

     \textbf{Case $28$: $[15, 15, 15, 15, 5, 5, 5, 3, 3, 3, 3]$}. The Frobenius-Perron dimensions of all of the simple objects outside of $\Cad$ must be $15$. Label the simple objects of $\mathcal{C}_\mathrm{ad}$ as $X_1, X_2, \dots, X_8$ from right-to-left. By Lemma~\ref{thm:deequiv}, all of $X_1$, $X_2$, $X_3$, and $X_4$ must be fixed, creating $12$ invertible objects in $(\mathcal{C}_\mathrm{ad})_{\mathbb{Z}_3}$. Then, as $5$ is not divisible by $3$, $X_5$, $X_6$, and $X_7$ are not fixed, resulting in one simple object of Frobenius-Perron dimension $5$ in $(\mathcal{C}_\mathrm{ad})_{\mathbb{Z}_3}$. Finally, $X_8$ clearly has to be fixed, resulting in three simple objects of Frobenius-Perron dimension $5$ in $(\mathcal{C}_\mathrm{ad})_{\mathbb{Z}_3}$.
     
     As a result, we have $\rank((\mathcal{C}_\mathrm{ad})_{\mathbb{Z}_3})=33$ and $|\mathcal{G}((\mathcal{C}_\mathrm{ad})_{\mathbb{Z}_3})|=25$ (note that we must account for the objects created by the duals of the simple objects in $\mathcal{C}$). Consider the universal grading of $(\mathcal{C}_\mathrm{ad})_{\mathbb{Z}_3}$. By Lemma~\ref{thm:trivial}, $\rank((((\mathcal{C}_\mathrm{ad})_{\mathbb{Z}_3})_\mathrm{ad})_\mathrm{pt})>1$, so we must have $\rank(((\mathcal{C}_\mathrm{ad})_{\mathbb{Z}_3})_\mathrm{ad})=9$, forcing the rank of all of the remaining components to be $1$. But this is impossible by Lemma~\ref{lemma: rank more than 1}, so we discard this case.

     \textbf{Case $29$: $[65, 65, 65, 39, 39, 39, 39, 15, 3, 3, 3]$}. The Frobenius-Perron dimensions of the simple objects outside of $\Cad$ must be $65$. Label the simple objects of $\mathcal{C}_\mathrm{ad}$ as $X_1, X_2, \dots, X_8$ from right-to-left. By Lemma~\ref{thm:deequiv}, all of $X_1$, $X_2$, and $X_3$ are not fixed, resulting in a simple object of Frobenius-Perron dimension $3$ in $(\mathcal{C}_\mathrm{ad})_{\mathbb{Z}_3}$. But $3$ does not divide $\FPdim((\mathcal{C}_\mathrm{ad})_{\mathbb{Z}_3})=\frac{\FPdim(\mathcal{C})}{9}=4225$, a contradiction by~\cite[Corollary 7.2]{ENO2}. Therefore, we discard case $29$.

     \textbf{Case $32$: $[25, 25, 25, 15, 15, 15, 15, 3, 3, 3, 3]$}. The Frobenius-Perron dimensions of the simple objects outside of $\Cad$ must be $25$. Label the simple objcets of $\mathcal{C}_\mathrm{ad}$ as $X_1, X_2, \dots, X_8$. By Lemma~\ref{thm:deequiv}, all of $X_1$, $X_2$, $X_3$, and $X_4$ must be fixed by $g$. We also know that either all of $X_5, X_6, X_7, X_8$ are fixed by $g$, or one of them is fixed by $g$ and three of them are not. First suppose that three of them are not. This results in a $(\mathcal{C}_\mathrm{ad})_{\mathbb{Z}_3}$ of rank $33$ and with $25$ invertible objects (once again recalling that we must also account for the duals of $X_1, \dots, X_8$). As a result, just as we had for case $28$, we must have $\rank(\mathcal{C}_\mathrm{ad})=9$. But this contradicts~\cite[Lemma 5.1 (c)]{CP}: $33\geq 9+25+2\cdot p-3\geq 37$ (as $p\geq 3$).

     Thus, all of $X_5, X_6, X_7, X_8$ must be fixed. As a result, in $(\mathcal{C}_\mathrm{ad})_{\mathbb{Z}_3}$, we have $25$ simple objects of Frobenius-Perron dimension $1$ and $24$ simple objects of Frobenius-Perron dimension $5$, resulting in $25$ invertibles, an overall rank of $49$, and an overall Frobenius-Perron dimension of $625$. Consider the universal grading of $(\mathcal{C}_\mathrm{ad})_{\mathbb{Z}_3}$, with $|\mathcal{G}((\mathcal{C}_\mathrm{ad})_{\mathbb{Z}_3})|=25$ components and a total of $49$ simple objects in those components. As a result, as the Frobenius-Perron dimension of all the components are congruent mod $8$, the number of simple objects in each component must be one of the following three cases:
     \begin{itemize}
         \item {[(1, 25), (24, 1)]},
         \item {[(1, 17), (1, 9), (23, 1)]},
         \item {[(3, 9), (22, 1)]}.
     \end{itemize}
     
     In the first two cases, by Lemma~\ref{thm:trivial}, $\rank((((\Cad)_{\mathbb{Z}_3})_\mathrm{ad})_\mathrm{pt})>1$. Therefore, we can discard these cases by Lemma~\ref{lemma: rank more than 1}.

     Finally, consider the third case. By~\cite[Proposition 8.20]{ENO1}, all of the components of the universal grading must have the same Frobenius-Perron dimension. Specifically, as $\FPdim((\mathcal{C}_\mathrm{ad})_{\mathbb{Z}_3})=625$, the Frobenius-Perron dimension of each component must be $25$. This means that the simple object in every component that has exactly $1$ simple object must have Frobenius-Perron dimension $5$, resulting in $22$ simple objects with Frobenius-Perron dimension $5$ being used and $24-22=2$ remaining. These $2$ remaining simple objects must be in the components with $9$ simple objects, resulting in at least one component having Frobenius-Perron dimension greater than $25$. But the Frobenius-Perron dimension of this components must also be $25$, a contradiction.

     Hence, we have discarded case $32$. This discards all cases except for case $34$, completing the proof.
\end{proof}

Finally, using the information from Lemma~\ref{lemma: FPdim array}, we prove that $\C$ must be equivalent to $\Rep(D^\omega(\mathbb{Z}_7\rtimes\mathbb{Z}_3))$.

\begin{proposition}
    Let $\mathcal{C}$ be an odd-dimensional MTC of rank $25$ with $3$ invertible objects. Then $\mathcal{C}\cong\Rep(D^\omega(\mathbb{Z}_7\rtimes\mathbb{Z}_3))$ for some $3$-cocycle $\omega$.
\end{proposition}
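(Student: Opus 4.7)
The plan is to apply Proposition~\ref{proposition: semidirect product}(b) directly, using the dimension array determined in Lemma~\ref{lemma: FPdim array}. First I would compute $\FPdim(\C)$ by summing the squares of the Frobenius-Perron dimensions of all simple objects, counting each non-invertible simple object and its dual separately. With $3$ invertibles, three pairs of dimension $7$, and eight pairs of dimension $3$, this gives
\[
\FPdim(\C) = 3 + 2\bigl(3\cdot 7^2 + 8\cdot 3^2\bigr) = 3 + 294 + 144 = 441 = 3^2\cdot 7^2.
\]

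Next I would observe that this places $\C$ exactly in the setting of Proposition~\ref{proposition: semidirect product} with $p=3$, $q=7$, and $a=2$: we have $\FPdim(\C) = p^2 q^a$ with $p$ and $q$ distinct odd primes, and $\sizeGC = 3 = p$ by hypothesis. The dichotomy of part (a), namely $p\mid q-1$ or $q\mid p-1$, is automatically consistent here since $3\mid 7-1$. Invoking part (b) of the same proposition with $a=2$ then yields $\C\simeq\Rep(D^\omega(H))$ for some $3$-cocycle $\omega$ and some non-abelian group $H$ of order $pq = 21$. Since $q\nmid p-1$ (that is, $7\nmid 2$), the only non-abelian option is $H\simeq \Z_q\rtimes \Z_p = \Z_7\rtimes\Z_3$, giving the desired conclusion.

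The bulk of the work has already been carried out in Lemma~\ref{lemma: FPdim array}, which eliminated the other $34$ candidate dimension arrays, and in Proposition~\ref{proposition: semidirect product}(b), whose own proof invokes solvability of $\Cad$, de-equivariantization by a Tannakian subcategory, the classification of low-dimensional integral fusion categories of~\cite{EGO}, and realization of $\C$ as a Drinfeld center via the \'etale algebra criterion of~\cite{DMNO}. Consequently, there is no real remaining obstacle: the proof here is just the verification that the hypotheses of Proposition~\ref{proposition: semidirect product}(b) are satisfied, which is immediate from Lemma~\ref{lemma: FPdim array} and the hypothesis $\sizeGC=3$.
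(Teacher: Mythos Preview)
Your proof is correct and follows essentially the same route as the paper: invoke Lemma~\ref{lemma: FPdim array} to obtain $\FPdim(\C)=441=3^2\cdot 7^2$, then apply Proposition~\ref{proposition: semidirect product}(b) with $p=3$, $q=7$. Note that Proposition~\ref{proposition: semidirect product}(b) already concludes $H\simeq\Z_q\rtimes\Z_p$ directly, so your additional remark eliminating $\Z_p\rtimes\Z_q$ is unnecessary (though harmless).
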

\begin{proof}
Following Lemma~\ref{lemma: FPdim array}, we know that the only possible dimension array for $\C$ is given in row 34 in Table~\ref{table: rank 25 dim arrays}.  Then it follows from Proposition~\ref{proposition: semidirect product} that $\mathcal C \cong\Rep(D^\omega(\mathbb{Z}_7\rtimes\mathbb{Z}_3))$.
\end{proof}

Therefore, in summary, we have shown the following theorem.

\begin{theorem}
    Let $\C$ be an odd-dimensional MTC of rank $25$. Then $\C$ is either pointed, perfect, or equivalent to $\Rep(D^\omega(\mathbb{Z}_7\rtimes\mathbb{Z}_3))$ for some $3$-cocycle $\omega$.
\end{theorem}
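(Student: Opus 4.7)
The plan is to reduce to the three cases $\sizeGC = 1, 3, 25$ and then handle each one separately, leveraging the heavy lifting already done in the preceding lemmas of Section~\ref{sec: rank 25}.

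First I would invoke Corollary~\ref{corollary: count invertibles 25} (itself a consequence of Lemma~\ref{lemma: rank linear combination}) to restrict $\sizeGC$ to the list $\{1,3,5,9,17,25\}$, and then apply Proposition~\ref{prop: 1 3 25 invertibles rank 25} to eliminate $\sizeGC \in \{5,9,17\}$. This leaves three possibilities: $\sizeGC = 1$ (in which case $\C$ is by definition perfect), $\sizeGC = 25$ (in which case every simple object is invertible, so $\C$ is pointed), and $\sizeGC = 3$ (which requires substantive argument).

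For the remaining case $\sizeGC = 3$, Proposition~\ref{prop: 1 3 25 invertibles rank 25} further tells me that the universal grading of $\C$ has component ranks $19, 3, 3$. I would then appeal to Lemma~\ref{lemma: FPdim array}, which uses Algorithm~\ref{alg: dim generation} to produce the candidate list of Frobenius--Perron dimension arrays in Table~\ref{table: rank 25 dim arrays} and then eliminates every row except row 34, pinning down $\FPdim(\C) = 441 = 3^2 \cdot 7^2$ with all simple objects of $\Cad$ of dimension $3$ and all simple objects outside $\Cad$ of dimension $7$.

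Finally, since $\FPdim(\C) = p^2 q^2$ with $p = 3$, $q = 7$ distinct odd primes and $\sizeGC = p = 3$, I can apply Proposition~\ref{proposition: semidirect product}(b) directly with $a = 2$ to conclude that $\C \simeq \Rep(D^\omega(\Z_7 \rtimes \Z_3))$ for some $3$-cocycle $\omega$, with $\Z_7 \rtimes \Z_3$ non-abelian. The main obstacle has effectively already been resolved by Lemma~\ref{lemma: FPdim array} (the case analysis of Table~\ref{table: rank 25 dim arrays} via Lemmas~\ref{thm:fixeddims} and~\ref{thm:deequiv} and the de-equivariantization arguments), so at the level of this final theorem the proof is simply the assembly of these pieces.
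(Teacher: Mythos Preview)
Your proposal is correct and follows essentially the same route as the paper: reduce $\sizeGC$ via Corollary~\ref{corollary: count invertibles 25} and Proposition~\ref{prop: 1 3 25 invertibles rank 25}, handle $\sizeGC=1$ and $25$ trivially, and for $\sizeGC=3$ invoke Lemma~\ref{lemma: FPdim array} to get $\FPdim(\C)=3^2\cdot 7^2$ and then apply Proposition~\ref{proposition: semidirect product}(b). This is exactly how the paper assembles the final theorem.
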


\section{Odd-dimensional MTCs of rank 27--31} \label{sec: rank 27-31}

In this section, we prove that all odd-dimensional MTCs of rank between $27$ and $31$ must be either pointed or perfect. We start with a corollary to Lemma~\ref{lemma: rank linear combination}, applied to ranks $27$, $29$, and $31$.
\
\begin{corollary}\label{corollary: count invertibles 27-33}
    If $\C$ is an odd-dimensional MTC, then the possible values of $\sizeGC$ by rank are the following:
    \begin{itemize}
        \item $\rank(\mathcal{C})=27$: $|\mathcal{G}(\mathcal{C})|=1, 3, 9, 11, 19, 27$,
        \item $\rank(\mathcal{C})=29$: $|\mathcal{G}(\mathcal{C})|=1, 3, 5, 7, 13, 21, 29$,
        \item $\rank(\mathcal{C})=31$: $|\mathcal{G}(\mathcal{C})|=1, 3, 5, 7, 15, 23, 31$.
    \end{itemize}
\end{corollary}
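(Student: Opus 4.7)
The plan is to apply Lemma~\ref{lemma: rank linear combination} directly to each of the three ranks. That lemma guarantees $\rank(\C) = \sizeGC \cdot m + 8k$ for some positive integer $m$ (the common minimum rank of a component of the universal grading, which is at least $1$ since every graded component is nonempty) and some nonnegative integer $k$. Since $\sizeGC$ divides the odd integer $\FPdim(\C)$, we also know $\sizeGC$ is odd, and clearly $\sizeGC \leq \rank(\C)$. So the task reduces to enumerating, for each $r \in \{27, 29, 31\}$, the odd positive integers $n \leq r$ for which $r - nm$ is a nonnegative multiple of $8$ for some positive integer $m$.

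For fixed $r$ and candidate $n$, this is a trivial finite check: one looks for an odd $m \geq 1$ with $nm \leq r$ and $nm \equiv r \pmod 8$ (the parity constraint follows because $r$ is odd and $8k$ is even, so $nm$ must be odd; with $n$ odd this forces $m$ odd). Running through $n = 1, 3, 5, \ldots, r$ for each of the three ranks and recording which values pass yields the three claimed lists.

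Concretely, for $r = 27$ the odd $n$ that survive are exactly $\{1, 3, 9, 11, 19, 27\}$: for example $n = 9$ works with $(m,k) = (3,0)$, $n = 11$ with $(m,k) = (1,2)$, and $n = 19$ with $(m,k) = (1,1)$. The remaining odd values $n \in \{5, 7, 13, 15, 17, 21, 23, 25\}$ are ruled out because $r - nm$ is not a nonnegative multiple of $8$ for any positive $m$. The cases $r = 29$ and $r = 31$ proceed identically and produce the lists $\{1, 3, 5, 7, 13, 21, 29\}$ and $\{1, 3, 5, 7, 15, 23, 31\}$ respectively; the less obvious surviving cases are $n = 13$ (with $(m,k) = (1,2)$) and $n = 21$ (with $(m,k) = (1,1)$) at rank $29$, and $n = 15$ (with $(m,k) = (1,2)$) and $n = 23$ (with $(m,k) = (1,1)$) at rank $31$.

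There is no real obstacle here: the argument is a bounded diophantine enumeration resting entirely on Lemma~\ref{lemma: rank linear combination}. The only points that require attention are to include those odd $n$ strictly larger than a proper divisor of $r$ that still admit a valid representation with $k \geq 1$, and to remember that $m \geq 1$ (rather than $\geq 0$), which is what excludes values like $n = 8$ from consideration and is automatically enforced by the oddness of $r$.
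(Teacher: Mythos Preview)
Your argument is correct and is exactly the approach the paper takes: the corollary is stated in the paper simply as a consequence of Lemma~\ref{lemma: rank linear combination}, and your enumeration of odd $n \leq r$ admitting $r = nm + 8k$ with $m \geq 1$ and $k \geq 0$ is precisely the intended finite check.
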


We use this corollary to show the following result using casework.

\begin{theorem}\label{thm: rank 27 to 31}
    Let $\C$ be an odd-dimensional MTC of rank $27$, $29$, or $31$. Then $\C$ must be either pointed or perfect.
\end{theorem}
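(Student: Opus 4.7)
The plan is to dispatch each of ranks $27$, $29$, $31$ by casework on $\sizeGC$, starting from the list in Corollary~\ref{corollary: count invertibles 27-33}. Since pointed and perfect are the two allowed conclusions, it suffices to derive a contradiction for every intermediate value of $\sizeGC$.

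First, for each pair $(\rank(\C),\sizeGC)$ with $1<\sizeGC<\rank(\C)$, I would enumerate the multisets of ranks of the universal-grading components: there are $\sizeGC$ of them, they sum to $\rank(\C)$, and they are all congruent modulo $8$ by~\cite[Proposition 8.20]{ENO1} combined with~\cite[Lemma 5.4]{CP}. This leaves only a handful of candidates per case. Lemma~\ref{thm:trivial} ensures $\Cadpt$ is non-trivial, after which I apply Lemma~\ref{lemma: rank more than 1}, Lemma~\ref{lemma: rank not divisible by p} (for prime $\sizeGC$), Lemma~\ref{lemma: odd number of components of rank}, Proposition~\ref{proposition: invertible implies equal rank}, and Corollary~\ref{corollary: three components rank Cad} to eliminate most candidates. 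For example, $(\rank(\C),\sizeGC)=(27,19)$ admits only the multiset $\{9,1,1,\ldots,1\}$, which has no component of rank at least $19$ and so fails Lemma~\ref{lemma: rank more than 1}; analogous reasoning should handle the large prime values of $\sizeGC$ for each rank, as well as most mid-range values like $\sizeGC \in \{7, 9, 11, 13, 15\}$.

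Second, for the configurations that survive the grading sieve, I would run Algorithm~\ref{alg: dim generation}, or the faster Algorithm~\ref{alg: Cad dim generation} once $\rank(\Cad)$ is known, to enumerate candidate Frobenius-Perron-dimension arrays. The surviving arrays would then be eliminated using the techniques assembled in Section~\ref{sec: rank 25}: Lemma~\ref{thm:fixeddims} and Remark~\ref{remark: fixed fpdim divisible by p} to constrain the orbit structure of the dimension multiset, Corollary~\ref{corollary: same dim outside Cad} to pin down the common dimension of simples outside $\Cad$ when $\sizeGC$ is prime, a Lemma~\ref{thm:deequiv}-style count showing the number of invertibles in $(\Cad)_{\Z_p}$ divides $\FPdim(\C)/p^2$, and, when $\FPdim(\C)=p^2q^a$, the number-theoretic obstruction from Proposition~\ref{proposition: semidirect product}(a).

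The main obstacle will be the balanced gradings on which the Section~\ref{sec: general results} lemmas have no bite: notably $\{9,9,9\}$ at $(\rank(\C),\sizeGC)=(27,3)$ and $\{3,3,\ldots,3\}$ at $(27,9)$, together with their analogues at ranks $29$ and $31$. Here $\rank(\Cad)$ and $\sizeGCad$ are pinned down, but there remains nontrivial freedom in the Frobenius-Perron dimensions of the non-invertible simples, and I expect we must run the algorithm and then eliminate every surviving array individually, as in Lemma~\ref{lemma: FPdim array}, using a combination of the de-equivariantization argument and mod-$p$ identities derived by expanding $X\otimes X^*$ for the smallest non-invertible simple.
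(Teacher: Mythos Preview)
Your plan matches the paper's approach closely: enumerate grading multisets, sieve with Lemmas~\ref{lemma: rank more than 1}--\ref{lemma: odd number of components of rank}, and fall back on Algorithm~\ref{alg: dim generation} for whatever survives. A few places where the paper's execution is lighter than you anticipate:

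For ranks $29$ and $31$ there are in fact no ``balanced'' analogues that survive the sieve. Every non-pointed, non-perfect multiset is killed directly by Lemma~\ref{lemma: rank more than 1} or Lemma~\ref{lemma: rank not divisible by p}; the algorithm is never invoked.

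For $(27,9)$ with multiset $\{3,3,\ldots,3\}$, you do not need the algorithm or any de-equivariantization. Since $\sizeGCad$ is a nontrivial divisor of $9$ bounded by $\rank(\Cad)=3$, it equals $3$, so $\Cad$ is pointed of dimension $3$; then every grading component has Frobenius--Perron dimension $3$, contradicting the existence of a non-invertible simple.

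For $(27,3)$ with multiset $\{9,9,9\}$, the paper first observes that $m_1\neq 3$: if $m_1=3$ then $\FPdim(\C_g)=\di_1^2$, impossible since the component containing $X_1$ has nine simples. Running Algorithm~\ref{alg: dim generation} with $m_1\geq 5$ leaves a single array, $\FPdim(\C)=2475$ with non-invertible dimensions $15,15,15,15,15,5,5,5,3,3,3,3$. This is killed by pigeonhole: ten simples of dimension $15$ in three components forces one component to contain at least four of them, giving Frobenius--Perron dimension $\geq 4\cdot 225=900>825=\FPdim(\C)/3$. None of the heavier tools (Lemma~\ref{thm:fixeddims}, de-equivariantization counts, Proposition~\ref{proposition: semidirect product}) are needed here.
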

\begin{proof}
    We mirror the proof of Proposition~\ref{prop: 1 3 25 invertibles rank 25} for this proof. Specifically, we again perform casework based on the possible values of $\sizeGC$, listed in Corollary~\ref{corollary: count invertibles 27-33}. We also can again find the possible multisets of the ranks of the components of the universal grading for each $\sizeGC$ using the facts that $\rank(\C_g)\equiv\rank(\C_h)\pmod{8}$ for all $g, h\in\mathcal{G}(\mathcal{C})$ and $\sum_{g\in\mathcal{G}(\C)}\rank(\C_g)$ is $27$, $29$, or $31$.
    
    We can still use Lemmas~\ref{lemma: rank more than 1},~\ref{lemma: rank not divisible by p}, and~\ref{lemma: odd number of components of rank} to discard cases, as for every $\sizeGC>1$, we know that $\Cpt$ is non-trivial, and therefore, by Lemma~\ref{thm:trivial}, $\Cadpt$ is also non-trivial.

    First, consider $\rank(\C)=27$. Using Lemma~\ref{lemma: rank more than 1}, we can discard the following cases, where we list each case as a multiset of the ranks of the components of the universal grading:
    \begin{itemize}
        \item $\sizeGC=19$: [(1, 9), (18, 1)],
        \item $\sizeGC=11$: [(1, 17), (10, 1)], [(2, 9), (9, 1)],
        \item $\sizeGC=3$: [(1, 25), (2, 1)], [(1, 17), (1, 9), (1, 1)].
    \end{itemize}

    This leaves the cases [(9, 3)] with $\sizeGC=9$, and [(3, 9)] with $\sizeGC=3$.

    For the former remaining case, [(9, 3)], we know that $\GCad$ is a subgroup of $\GC$. Hence, as $\sizeGC=9$, $\sizeGCad\neq 1$ by Lemma~\ref{thm:trivial}, and $\rank(\Cad)=3$, we must have $\sizeGCad=3$. But this means that $\FPdim(\C_g)=\FPdim(\Cad)=1^2+1^2+1^2=3$ for all $g\in\GC$, i.e., $\C$ is pointed, a contradiction.
    
    For the latter remaining case, [(3, 9)], as in Section~\ref{section: algorithm}, let the non-invertible simple objects of such a category $\C$ be $X_1, X_1^*, \dots, X_{12}, X_{12}^*$ and their dimensions be $\di_1, \di_1, \dots, \di_{12}, \di_{12}$. Define $m_1=\frac{\FPdim(\C)}{\di_1^2}$ (the same definition as we had in Section~\ref{section: algorithm}), which must be an odd positive integer by~\cite[Theorem 2.11]{ENO2}. If $m_1=3$, then $\FPdim(\C)=3\di_1^2$, so $\FPdim(\C_g)=\di_1^2$ by~\cite[Proposition 8.20]{ENO1}. But the component containing the simple object $X_1$ clearly has Frobenius-Perron dimension greater than $\di_1^2$ (as there are $9$ simple objects in that component), a contradiction. As a result, we assume $m_1\neq 3$.

    In order to discard the cases for $m_1\neq 3$, we run Algorithm~\ref{alg: dim generation}, considering only the cases where $m_1\geq 5$. Doing so produces one solution set of Frobenius-Perron dimensions of the simple objects. Specifically, this solution has $\FPdim(\C)=2475$ and dimensions of non-invertible simple objects are
    \[15, 15, 15, 15, 15, 5, 5, 5, 3, 3, 3, 3.\]

    As a result, there are $10$ simple objects in $\mathcal{C}$ with Frobenius-Perron dimension $15$ (the five listed above and their duals). Hence, as there are three components in the universal grading, there must exist one such component with at least $4$ simple objects with Frobenius-Perron dimension $15$. This component therefore must have dimension at least $4\cdot 15^2=900$. But the dimension of each component is $\frac{2475}{3}=825$, a contradiction, so we cannot have [(3, 9)].

    Next, consider $\rank(\C)=29$. By Lemma~\ref{lemma: rank more than 1}, we discard the cases listed below:
    \begin{itemize}
        \item $\sizeGC=21$: [(1, 9), (20, 1)],
        \item $\sizeGC=13$: [(1, 17), (12, 1)], [(2, 9), (11, 1)].
    \end{itemize}

    Additionally, by Lemma~\ref{lemma: rank not divisible by p}, we discard the cases:
    \begin{itemize}
        \item $\sizeGC=7$: [(1, 11), (6, 3)],
        \item $\sizeGC=5$: [(1, 25), (4, 1)], [(1, 17), (1, 9), (3, 1)], [(3, 9), (2, 1)],
        \item $\sizeGC=3$: [(1, 15), (2, 7)].
    \end{itemize}
    
    As this discards all non-pointed and non-perfect cases, if $\rank(\C)=29$, $\C$ must be pointed or perfect.

    Finally, consider $\rank(\C)=31$. We discard the following cases by Lemma~\ref{lemma: rank more than 1}:
    \begin{itemize}
        \item $\sizeGC=23$: [(1, 9), (22, 1)],
        \item $\sizeGC=15$: [(1, 17), (14, 1)], [(2, 9), (13, 1)].
    \end{itemize}

    We also can discard the following cases by Lemma~\ref{lemma: rank not divisible by p}:
    \begin{itemize}
        \item $\sizeGC=7$: [(1, 25), (6, 1)], [(1, 17), (1, 9), (5, 1)], [(3, 9), (4, 1)],
        \item $\sizeGC=5$: [(1, 19), (4, 3)], [(2, 11), (3, 3)],
        \item $\sizeGC=3$: [(1, 21), (2, 5)], [(2, 13), (1, 5)].
    \end{itemize}

    This leaves just the pointed and perfect cases, as desired.
\end{proof}

\section{Odd-dimensional MTCs of rank 33--49}\label{sec: ranks 33-49}

In this section, we reduce the possible cases of odd-dimensional MTCs of rank between $33$ and $49$. We begin each case by determining all possible values of $\sizeGC$, which we find using Lemma~\ref{lemma: rank linear combination}. Then, for each of those values, we enumerate all possible multisets that contain the ranks of the components of the universal grading, and use this information to discard possibilities.

Additionally, we can often directly apply lemmas previously shown to quickly discard many cases. Specifically, the following are the methods, which are also listed in Appendix~\ref{section: case list}.

\begin{enumerate}[label=(\alph*)]
    \item Lemma~\ref{lemma: rank more than 1}\label{enumitem: 33-49 rank more than 1},
    \item Lemma~\ref{lemma: odd number of components of rank}\label{enumitem: 33-49 odd number of components of rank},
    \item Lemma~\ref{lemma: rank Cad 9} in conjunction with Lemma~\ref{lemma: odd number of components of rank},
    \item \cite[Proposition 5.6]{CP}, Proposition~\ref{lemma: fixed simple}, and Proposition~\ref{proposition: invertible implies equal rank}, all of which are in conjunction with Lemma~\ref{lemma: odd number of components of rank}.
\end{enumerate}

Note that we can use options~\ref{enumitem: 33-49 rank more than 1} and~\ref{enumitem: 33-49 odd number of components of rank} whenever $\sizeGC$ is larger than 1 (i.e., whenever the multiset representing $\C$ has length greater than $1$), since then $\Cpt$ is non-trivial and $\Cadpt$ is also non-trivial by Lemma~\ref{thm:trivial}.

All of the cases for each rank that can be discarded using the above options can be found in Appendix~\ref{section: case list}, and we present methods to manually discard some of the remaining cases in the following propositions.

\begin{proposition}\label{prop: rank 33}
    Let $\C$ be an odd-dimensional MTC of rank $33$. Then $\C$ must be pointed, perfect, or have $3$ invertible objects. Additionally, if $\sizeGC=3$, then the ranks of the components of the universal grading of $\C$ must be $[(1, 27), (2, 3)]$.
\end{proposition}
\begin{proof}
    We can discard all values of $\sizeGC$ except for $33$, $25$, $17$, $11$, $9$, $5$, $3$, and $1$ using Lemma~\ref{lemma: rank linear combination}. Then, we discard most cases directly using previously shown lemmas, which are displayed in Appendix~\ref{section: case list}. The following non-pointed and non-perfect cases remain:
    \begin{itemize}
        \item {[(1, 13), (4, 5)]},
        \item {[(1, 27), (2, 3)]}.
    \end{itemize}

    \textbf{Case [(1, 13), (4, 5)]}: by Lemma~\ref{lemma: odd number of components of rank}, we know that $\rank(\Cad)=13$. Since $\sizeGC=5$, by Corollary~\ref{corollary: same dim outside Cad}, we also know that the simple objects outside of $\Cad$ have Frobenius-Perron dimension $\di$ satisfying $\di^2=\frac{\FPdim(\C)}{25}$. Let $\di_1$ be the largest Frobenius-Perron dimension of any simple object in $\C$, corresponding to simple object $X_1$, and let $m_1=\frac{\FPdim(\C)}{\di_1^2}$, as in Algorithm~\ref{alg: dim generation}. If $X_1$ is not is $\Cad$, then $m_1=25$. If $X_1$ is in $\Cad$, then so is its dual, and $2\di_1^2<\FPdim(\Cad)=\frac{\FPdim(\C)}{5}$, so $m_1=\frac{\FPdim(\C)}{\di_1^2}>10$. By~\cite[Lemma 5.4]{CP}, we know that $m_1\equiv\rank(\C)\equiv1\pmod{8}$, so $m_1\geq 17$, and by $\FPdim(\C)=25\di^2$, $m_1$ must be a perfect square, so $m_1\geq 25$. Running the Algorithm~\ref{alg: dim generation} for rank $33$ with $5$ invertible objects only for cases with $m_1\geq 25$ produces no solutions, which discards this case. 

    This means that $\C$ must be pointed, perfect, or have $3$ invertible objects, with the last case represented by [(1, 27), (2, 3)], as desired.
\end{proof}

\begin{proposition}
    Let $\C$ be an odd-dimensional MTC of rank $35$. Then $\C$ must be
    \begin{enumerate}
        \item pointed,
        \item perfect, or
        \item the modular subcategory of $\mathcal Z(\operatorname{Vec}_{H_3}^{\omega})$ with 9 invertible objects and 26 simple objects of dimension 3, where $H_3$ denotes the Heisenberg group of order $3^3$.
    \end{enumerate}
\end{proposition}

\begin{proof}
    We can discard all values of $\sizeGC$ except for $35$, $27$, $19$, $11$, $9$, $7$, $5$, $3$, and $1$ using Lemma~\ref{lemma: rank linear combination}. Then, we discard most cases directly using previously shown lemmas, which are displayed in Appendix~\ref{section: case list}. The following non-pointed and non-perfect cases remain:
    \begin{itemize}
        \item {[(1, 11), (8, 3)]},
        \item {[(1, 17), (2, 9)]}.
    \end{itemize}

    \textbf{Case [(1, 11), (8, 3)]}: 
   
    since $\Cad$ has rank 11 and we know that $\sizeGCad=9$ by Corollary~\ref{corollary: three components rank Cad}, then there are only two noninvertible simple objects $X$ and $X^*$ in $\Cad$. Moreover, both $X$ and $X^*$ are fixed by the action of $\mathcal{G}(\Cad)$. Consider the decomposition  $$X\otimes X^* = \bigoplus_{g\in \mathcal{G}(\Cad)} g \oplus m (X\oplus X^*),$$ for some $m\geq 0$. Taking Frobenius-Perron dimension on the equation above, we get $\FPdim(X)^2 = 9+ 2m\FPdim(X)$. Solving the quadratic gives two potential dimensions for $X$: $\FPdim(X) =3$ or $\FPdim(X)=9$. But if $\FPdim(X)=9$ then $\FPdim(\Cad) = 2\cdot 9^2+9 = 3^2\cdot 19$, which is not possible. In fact, let $\mathcal C_g$ be any other component. Then $\C_g$ has 3 simple objects with the same dimension $d$, see~\cite[Lemma 5.1]{CP}, and so $3^2\cdot 19=\FPdim(\C_g)=3d^2$, a contradiction. Now, if $\FPdim(X) =3$ then $\FPdim(\Cad) = 2\cdot 3^2+9 = 3^2\cdot 3$ and  $\FPdim(\mathcal C) = 3^5$. Hence $\C$ is the modular subcategory of $\mathcal Z(\operatorname{Vec}_{H_3}^{\omega})$ with 9 invertible objects and 26 simple objects of dimension 3, where $H_3$ denotes the Heisenberg group of order $3^3$, see~\cite[Example 4]{CPS} and~\cite[Theorem 3.5]{CP}.

\textbf{Case [(1, 17), (2, 9)]}: note that by Lemma~\ref{thm:trivial}, $|\mathcal{G}(\mathcal C)|=|\mathcal{G}(\mathcal C_{\mathrm{ad}})|=3$.

We use Algorithm~\ref{alg: Cad dim generation} to compute the potential Frobenius-Perron dimensions of non-invertible simple objects in $\mathcal C_{\mathrm{ad}}$. This produces the cases listed in Table~\ref{table: rank 35 dim arrays}, which exhibits these dimensions in columns 1 through 7, excluding duplicates due to duals.

We will discard all potential arrays on a case-by-case basis.

\begin{table}[h]
	\begin{center}
		\begin{tabular}{ c | c | c | c | c | c | c | c | c }
			\# & $\FPdim(\C)$ & 1 & 2 & 3 & 4 & 5 & 6 & 7 \\
			\hline
			1 & $3^2 \cdot 5^2\cdot 19$	&	15&	15&	15&	3&	3	&3&	3 \\
			2 & $3^3 \cdot 5^2$	&5	&5	&5	&3	&3	&3	&3\\
			3 &$3^2 \cdot 43$&		3	&3&	3&	3&	3&	3&	3 \\
		\end{tabular}
		\caption{Potential dimension arrays of $\Cad$ for $\C$ an MTC of rank $35$ with $3$ invertible objects, $\rank(\Cad)=17$, and $\sizeGCad=3$. All discarded.}
		\label{table: rank 35 dim arrays}
	\end{center}
\end{table}

Recall that MTCs of Frobenius-Perron dimension $mp^2$, for $m$ a square free integer, are pointed. 
In case 1, we would have that $(\mathcal C_{\mathrm{ad}})_{\mathbb Z_3}$ is an MTC of Frobenius-Perron dimension $5^2\cdot 19$, see Section~\ref{section: equivariantization}. It follows that $(\mathcal C_{\mathrm{ad}})_{\mathbb Z_3}$ is pointed, which is not possible. In fact, the simple objects of Frobenius-Perron dimension 15 in $\mathcal C_{\mathrm{ad}}$ would generate simple objects of dimensions either 15 or 5 in $(\mathcal C_{\mathrm{ad}})_{\mathbb Z_3}$. Case 2 is discarded in the same way, since $(\mathcal C_{\mathrm{ad}})_{\mathbb Z_3}$ is an MTC of Frobenius-Perron dimension $5^2\cdot 3$, hence pointed, but simple objects of dimension 5 in $\mathcal C_{\mathrm{ad}}$ generate objects of dimension 5 in $(\mathcal C_{\mathrm{ad}})_{\mathbb Z_3}$.  Lastly, in case 3, $\FPdim(\mathcal C)=3^2\cdot 43$ and so $\mathcal C$ should be pointed, a contradiction.
\end{proof}

\begin{proposition}
    Let $\C$ be an odd-dimensional MTC of rank $37$. Then $\C$ must be either pointed or perfect.
\end{proposition}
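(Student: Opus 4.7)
The plan mirrors the proofs of the earlier propositions in this section. By Lemma~\ref{lemma: rank linear combination}, writing $37 = \sizeGC \cdot m + 8k$ with $m \geq 1$ and $k \geq 0$ restricts the possible values of $\sizeGC$ to $\{1,3,5,7,13,21,29,37\}$. The extremes $\sizeGC = 1$ and $\sizeGC = 37$ are the desired perfect and pointed conclusions, so I would work by contradiction on the six intermediate values.

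For each such $\sizeGC$, I would enumerate the multisets of ranks of components of the universal grading of $\C$, using that they sum to $37$ and are all congruent modulo $8$. Since $\C$ is non-pointed by assumption, Lemma~\ref{thm:trivial} guarantees that $\Cadpt$ is non-trivial, so Lemmas~\ref{lemma: rank more than 1},~\ref{lemma: rank not divisible by p} and~\ref{lemma: odd number of components of rank} all apply. I expect Lemma~\ref{lemma: rank more than 1} (with $p$ a prime divisor of $\sizeGCad$) to dispose of all configurations for $\sizeGC \in \{29, 21\}$ and most of those for $\sizeGC = 13$, and Lemma~\ref{lemma: rank not divisible by p} (with $p = \sizeGC$ prime) to dispose of the remainder of $\sizeGC \in \{13, 7, 5\}$ together with the configuration $\{23, 7, 7\}$ for $\sizeGC = 3$. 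I expect only the single configuration $\sizeGC = 3$ with component ranks $\{15, 15, 7\}$ to survive this routine bookkeeping.

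This surviving case is the main obstacle. Lemma~\ref{lemma: odd number of components of rank} forces $\rank(\Cad) = 7$, and since $\sizeGC = 3$ is prime with $\Cadpt$ non-trivial, Remark~\ref{remark: Cpt subset Cad} gives $\Cpt \subseteq \Cad$ and hence $\sizeGCad = 3$. The non-invertible part of $\Cad$ thus consists of two dual pairs of simple objects with Frobenius-Perron dimensions $d_1 \geq d_2 > 1$. I would run Algorithm~\ref{alg: Cad dim generation} on these inputs; equivalently, solving $(m_1 - 6) d_1^2 = 9 + 6 d_2^2$ with $d_2 \leq d_1$ and $m_1 \equiv 5 \pmod{8}$, the inequality $(m_1 - 12) d_1^2 \leq 9$ combined with $m_1 > 6$ forces $m_1 = 13$ and $d_1 = d_2 = 3$, giving $\FPdim(\C) = 117$.

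To finish, I would use the universal grading once more. Since $\Cpt \subseteq \Cad$, each non-adjoint component $\C_g$ has rank $15$ and contains no invertible objects, so every one of its simple objects has Frobenius-Perron dimension at least $3$. This yields $\FPdim(\C_g) \geq 15 \cdot 9 = 135$, contradicting $\FPdim(\C_g) = \FPdim(\Cad) = 3 + 4 \cdot 9 = 39$. This rules out the final configuration and completes the proof.
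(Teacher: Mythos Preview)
Your proof is correct and follows the same overall scaffold as the paper: restrict $\sizeGC$ via Lemma~\ref{lemma: rank linear combination}, enumerate component-rank multisets, and eliminate everything with Lemmas~\ref{lemma: rank more than 1}--\ref{lemma: odd number of components of rank} down to the single case $\{15,15,7\}$. Your bookkeeping there is accurate (and your observation that Lemma~\ref{lemma: rank not divisible by p} alone already kills all the $\sizeGC=5$ configurations is fine, even though the paper happens to invoke Lemma~\ref{lemma: rank more than 1} for three of them).

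The genuine difference is in how you dispatch $\{15,15,7\}$. The paper argues via de-equivariantization: the four non-invertible simples of $\Cad$ must all be fixed by $\GCad\simeq\mathbb Z_3$ (since non-fixed objects come in blocks of~$6$), so $(\Cad)_{\mathbb Z_3}$ has rank $1+4\cdot 3=13$, hence is pointed by~\cite[Theorem 6.3(a)]{CP}, giving $\FPdim(\Cad)=3\cdot 13=39$. You instead run Algorithm~\ref{alg: Cad dim generation} by hand, and your inequality $(m_1-12)d_1^2\le 9$ with $m_1\equiv 5\pmod 8$, $m_1>6$ cleanly forces $m_1=13$, $d_1=d_2=3$, $\FPdim(\C)=117$, $\FPdim(\Cad)=39$. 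Both routes then finish identically via $\FPdim(\C_g)\ge 15\cdot 9=135>39$. Your argument is more self-contained (no appeal to the rank-$13$ classification or the de-equivariantization machinery), while the paper's is more conceptual and illustrates a technique reused elsewhere in the section.
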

\begin{proof}
    We can discard all values of $\sizeGC$ except for $37$, $29$, $21$, $13$, $7$, $5$, $3$, and $1$ using Lemma~\ref{lemma: rank linear combination}. Then, we discard most cases directly using previously shown lemmas, which are displayed in Appendix~\ref{section: case list}. The following non-pointed and non-perfect case remains:
    \begin{itemize}
        \item {[(2, 15), (1, 7)]}.
    \end{itemize}

    \textbf{Case [(2, 15), (1, 7)]}: assume for contradiction that an MTC corresponding to this case exists. By Lemma~\ref{lemma: rank not divisible by p}, we know that $\rank(\Cad)=7$. We also know that $|\mathcal{G}(\Cad)|=3$. Consider the de-equivariantization of $\Cad$ by $\mathbb Z_3$. Since there are only $4$ non-invertible objects in $\Cad$ and the number of non-fixed objects must be a multiple of $6$ by Remark~\ref{remark: fixed fpdim divisible by p}, all must be fixed by the action of $\GC$ on the simple objects of $\Cad$. Therefore, each non-invertible simple object in $\Cad$ creates three simple objects in $(\Cad)_{\mathbb Z_3}$, so the rank of $(\Cad)_{\mathbb Z_3}$ must be $1+4\cdot 3=13$.
    
    Since $(\Cad)_{\mathbb Z_3}$ is modular (see Section~\ref{section: equivariantization}), it must be pointed by~\cite[Theorem 6.3 (a)]{CP}. This means that $\FPdim((\Cad)_{\mathbb Z_3})=13$ and $\FPdim(\Cad)=39$. This must be equal to the Frobenius-Perron dimension of any of the two non-adjoint components $\C_g$ of the universal grading of $\C$. The Frobenius-Perron dimension of all simple objects outside of $\Cad$ must be at least $3$ since all invertible objects are in $\Cad$. Since the rank of the non-adjoint components is $15$, the Frobenius-Perron dimension of a non-adjoint component $\C_g$ is at least $15\cdot 9 = 135 > 39$, a contradiction.

    This means that $\C$ must be pointed or perfect, as desired.
\end{proof}

\begin{proposition} \label{proposition: rank 39}
    Let $\C$ be an odd-dimensional MTC of rank $39$. Then $\C$ must be either pointed or perfect.
\end{proposition}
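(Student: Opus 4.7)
The plan is to mirror the structure of the preceding propositions in this section. First, applying Lemma~\ref{lemma: rank linear combination} together with the parity constraint that $\sizeGC$ is odd restricts $\sizeGC$ to the set $\{1, 3, 5, 7, 13, 15, 23, 31, 39\}$; the extremes $39$ and $1$ realize the pointed and perfect cases, so the task reduces to excluding the seven intermediate values. For each such value I will enumerate the multisets of component ranks of the universal grading, all summing to $39$ and congruent modulo~$8$. Since $\rank(\C) \leq 73$ and $\C$ is assumed non-pointed, Lemma~\ref{thm:trivial} ensures $\Cadpt$ is non-trivial, so Lemmas~\ref{lemma: rank more than 1}, \ref{lemma: rank not divisible by p}, and \ref{lemma: odd number of components of rank} all become available.

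For $\sizeGC \in \{3, 5, 7, 13\}$, solving $\sizeGC \cdot r \equiv 7 \pmod 8$ forces every component rank to lie in a residue class coprime to $\sizeGC$, directly contradicting Lemma~\ref{lemma: rank not divisible by p} with $p = \sizeGC$. For $\sizeGC \in \{23, 31\}$, the allowed multisets have maximum component rank $17$ or $9$ respectively, hence cannot supply three components of rank at least $\sizeGC$ (the only prime divisor of $\sizeGCad$), contradicting Lemma~\ref{lemma: rank more than 1}. For $\sizeGC = 15$, the three arising multisets are $\{25, 1, \ldots, 1\}$, $\{17, 9, 1, \ldots, 1\}$, and $\{9, 9, 9, 1, \ldots, 1\}$; the first is eliminated by Lemma~\ref{lemma: rank more than 1} applied with any prime $p \mid \sizeGCad$, and the second by Lemma~\ref{lemma: odd number of components of rank} since three distinct ranks each occur an odd number of times.

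The main obstacle will be the surviving configuration $\{9, 9, 9, 1, \ldots, 1\}$ with $\sizeGC = 15$. Each rank-one component contains a single simple of Frobenius-Perron dimension $\sqrt{\FPdim(\Cad)}$, which is non-invertible because $\FPdim(\Cad) \geq 9$. Hence all fifteen invertibles lie in the three rank-nine components, and comparing invertible counts per coset of $\GCad$ in $\GC$ with the available components forces $\sizeGCad = 5$: the alternatives $1$, $3$, and $15$ are ruled out by Lemma~\ref{thm:trivial}, by requiring five components to contain invertibles, and by exceeding the rank of a component, respectively. So $\Cad$ has five invertibles and four non-invertibles. The finishing step is to de-equivariantize by the Tannakian subcategory $(\Cad)_\mathrm{pt} \simeq \Rep(\Z_5)$: the invertibles of $\Cad$ form a single free $\Z_5$-orbit, and since all other orbits have size dividing $5$ but only four non-invertibles remain, each non-invertible must be fixed. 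An orbit count then yields $\rank((\Cad)_{\Z_5}) = 1 + 4 \cdot 5 = 21$, so by Theorem~\ref{thm: pointed 17 to 23} this odd-dimensional MTC is pointed, giving $\FPdim(\Cad) = 5 \cdot 21 = 105$. Since $\sqrt{105} \notin \Z$, the single simple in each rank-one component cannot have integer Frobenius-Perron dimension, contradicting the integrality of $\C$ and completing the exclusion of all non-pointed, non-perfect cases.
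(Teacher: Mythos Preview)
Your proof is correct and follows the same overall strategy as the paper: restrict $\sizeGC$ via Lemma~\ref{lemma: rank linear combination}, enumerate the multisets of component ranks, and eliminate them with Lemmas~\ref{lemma: rank more than 1}, \ref{lemma: rank not divisible by p}, and~\ref{lemma: odd number of components of rank}. Your uniform residue-class observation for prime $\sizeGC\in\{3,5,7,13\}$ is a bit slicker than the paper's itemized treatment, but equivalent in content.

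There is one substantive difference worth noting. For $\sizeGC=15$ the three admissible multisets are $\{25,1^{14}\}$, $\{17,9,1^{13}\}$, and $\{9,9,9,1^{12}\}$. The paper's proof eliminates only the first two (both via Lemma~\ref{lemma: rank more than 1}) and never mentions the third; Lemma~\ref{lemma: rank not divisible by p} is unavailable since $15$ is not prime, and Lemmas~\ref{lemma: rank more than 1} and~\ref{lemma: odd number of components of rank} do not exclude it either. Your coset-counting argument (invertibles in each component form a coset of $\GCad$, so $\sizeGCad=3$ would require five rank-$9$ components and $\sizeGCad=15$ would exceed $\rank(\Cad)$) correctly pins down $\sizeGCad=5$, and the de-equivariantization by $\Z_5$ yielding a rank-$21$ odd-dimensional MTC, hence pointed by Theorem~\ref{thm: pointed 17 to 23}, forces $\FPdim(\Cad)=105$, contradicting integrality of the rank-$1$ components. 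This closes the case cleanly; the technique parallels the paper's own handling of $\{9,9,9,1^{6}\}$ for $\sizeGC=9$ in Proposition~\ref{prop: rank 33}, so it is fully in the spirit of the surrounding arguments.
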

\begin{proof}
    We can discard all values of $\sizeGC$ except for $39$, $31$, $23$, $15$, $13$, $7$, $5$, $3$, and $1$ using Lemma~\ref{lemma: rank linear combination}. Then, we discard all cases directly using previously shown lemmas, which are displayed in Appendix~\ref{section: case list}. Hence, as all non-pointed and non-perfect cases are discarded, $\C$ must be pointed or perfect, as desired.

\end{proof}

\begin{proposition}
  Let $\C$ be an odd-dimensional MTC of rank $41$. Then $\C$ is either pointed, perfect, or
\begin{itemize}
    \item  $\mathcal C$ has 5 invertible objects, and Frobenius-Perron dimension array as in row 15 of Table~\ref{table: rank 41 dim arrays}, or
    \item $\mathcal C$ has 3 invertible objects, and the ranks of the components of the universal grading of $\C$ are $[(1, 35), (2, 3)]$.
\end{itemize}
\end{proposition}
\begin{proof}
    We can discard all values of $\sizeGC$ except for $41$, $33$, $25$, $17$, $11$, $9$, $5$, $3$, and $1$ using Lemma~\ref{lemma: rank linear combination}. Then, we discard most cases directly using previously shown lemmas, which are displayed in Appendix~\ref{section: case list}. The following non-pointed and non-perfect cases remain:
    \begin{itemize}
        \item {[(1, 21), (4, 5)]},
        \item {[(1, 35), (2, 3)]}.
    \end{itemize}

\textbf{Case [(1, 21), (4, 5)]}: note that by Lemma~\ref{thm:trivial}, $|\mathcal{G}(\mathcal C)|=|\mathcal{G}(\mathcal C_{\mathrm{ad}})|=5$. 

Using Algorithm~\ref{alg: dim generation}, we compute all potential cases for the Frobenius-Perron dimensions of simple objects in the category. Similarly to the case [(1, 13), (4, 5)] for rank $33$ in Proposition~\ref{prop: rank 33}, $m_1$ must be a perfect square that is at least $25$. Running Algorithm~\ref{alg: dim generation} for rank $41$ with $5$ invertible objects only for cases with $m_1\geq 25$ produces the arrays listed in Table~\ref{table: rank 41 dim arrays}. The first column holds the Frobenius-Perron dimension of the 20 objects outside $\mathcal C_{\mathrm{ad}}$ for each case. The rest of the columns show the Frobenius-Perron dimensions of the remaining non-invertible simple objects, excluding duplicates due to duals.

We will discard all potential arrays on a case-by-case basis.

\begin{table}[h]
	\begin{center}
		\begin{tabular}{ c | c | c | c | c | c | c | c | c | c  | c}
			\# & $\FPdim(\C)$ & 1 & 2 & 3 & 4 & 5 & 6 & 7 & 8 &9 \\
			\hline
			1 & $3^6 \cdot 5^2 \cdot 13^2$ & 351& 351 &351 & 195 & 135 & 65 & 27 & 15 & 13 \\
			2 & $3^4 \cdot 5^2 \cdot 7^2 \cdot 11^2 \cdot 13^2$& 9009&9009&9009& 5005& 3465 & 1365& 1287 & 63 & 35 \\
			3 &$3^4 \cdot 5^4 \cdot 7^2 \cdot 13^2$& 4095 &4095& 4095 & 2275 & 1365 & 819 & 819 & 63 & 13  \\
			4 & $3^6 \cdot 5^4$ & 135 &135& 135 & 75 & 45 & 27 & 25 & 9 & 5  \\
			5 & $3^2 \cdot 5^2 \cdot 7^2 \cdot 11^2$& 231 & 231&231 & 105 & 105 & 35 & 35 & 33 & 33 \\
			6 & $3^6 \cdot 5^2 \cdot 11^2$ & 297 & 297&297 & 135 & 99 & 99 & 55 & 55 & 15 \\
			7 & $3^2 \cdot 5^2\cdot 11^2$ &33 & 33&33 & 15 & 11 & 11 & 5 & 5 & 5  \\
			8 & $3^2 \cdot 5^2 \cdot 7^2 \cdot 13^2$ & 273&273& 273 & 105 & 91 & 91 & 91 & 35 & 13 \\
			9 & $3^2 \cdot 5^2 \cdot 7^2 \cdot 13^2$ & 273& 273&273 & 105 & 91 & 91 & 65 & 65 & 35\\
			10 & $3^2 \cdot 5^2 \cdot 17^2 \cdot 37^2$ & 1887 & 1887&1887 & 629 & 629 & 555 & 555 & 555 & 255 \\
			11 & $3^2 \cdot 5^2 \cdot 19^2$& 57	& 57&51&	19	&19	&15	&15	&15&	15 \\
			12 & $3^4 \cdot 5^2 \cdot 7^2 \cdot 11^2\cdot 13^2$& 9009&	9009&6435	&6435	&5005&	3465&	1365	&63	&35 \\
			13 & $3^2 \cdot 5^2 \cdot 7^2 \cdot 11^2$ &231	&231&165	&165	&105	&105&	35	&35	&33 \\
			14 & $3^2 \cdot 5^2 \cdot 7^2 \cdot 11^2$ & 231&165&165&	165	&165	&105	&105&	35	&35\\
			15 & $3^4 \cdot 5^2$ &9&5	&5&	5&	5&	5	&5	&5&	5
		\end{tabular}
		\caption{Potential dimension arrays for MTC $\C$ of rank $41$ with $5$ invertible objects, such that $m_1\geq 25$. All but line 15 are discarded.}
		\label{table: rank 41 dim arrays}
	\end{center}
\end{table}
Note that dimensions in columns 2--9 that appear less than five times in a row must correspond with simple objects that are fixed by the action of $\mathcal G(\mathcal C_{\mathrm{ad}})$. Hence cases 1--13 are discarded following Remark~\ref{remark: fixed fpdim divisible by p}. Case 15 is treated in Proposition~\ref{proposition: rank 41 invertibles 5}. 

It remains to look at case 14. All simple objects in $\mathcal C_{\mathrm{ad}}$ must be fixed, as no dimension in columns 2--9 is repeated 5 times. Consider the de-equivariantization $(\mathcal C_{\mathrm{ad}})_{\mathbb Z_5}$, which is again an MTC, see Section~\ref{section: equivariantization}. Non-invertible  simple objects in $(\mathcal C_{\mathrm{ad}})_{\mathbb Z_5}$ will have Frobenius-Perron dimensions 33, 21 and 7. Moreover, there is a unique invertible object in $(\mathcal C_{\mathrm{ad}})_{\mathbb Z_5}$. Let $X_1$ be one of the simple objects of Frobenius-Perron dimension $7$, and consider the decomposition 
\begin{align*}
	X_1\otimes X_1^{*}=1\oplus \bigoplus\limits_{i=1}^k N_{X_1,X_1^*}^{X_i} (X_i\oplus X_{i^*}),
\end{align*}
where $ 1,X_1, \dots, X_k, X_1^*, \dots, X_k^*$ denote all simple objects in $(\mathcal C_{\mathrm{ad}})_{\mathbb Z_5}$, and we are using that  $N_{X_1,X_1^*}^{X_i}=N_{X_1,X_1^*}^{X_i^*}$ for all $i=1,\dots, k$. Taking Frobenius-Perron dimension on both sides of the equation above, we get
\begin{align*}
	49=1+2\sum\limits_{i=1}^k N_{X_1,X_1^*}^{X_i} \FPdim(X_i).
\end{align*}
Hence $N_{X_1,X_1^*}^{X_i}=0$ for all $X_i$ with  $\FPdim(X_i)=33$ or $21$. But then 
\begin{align*}
	49=1+2m\cdot 7
\end{align*}
for some $m\geq 1$, a contradiction. Hence case 14 is also discarded.  

This means that $\C$ must be pointed, perfect, have $5$ invertible objects with Frobenius-Perron dimensions as in row 15 of Table~\ref{table: rank 41 dim arrays}, or have $3$ invertible objects, with the last case represented by [(1, 35), (2, 3)], as desired.
\end{proof}

\begin{proposition}\label{proposition: rank 41 invertibles 5}
    Suppose there exists an MTC $\C$ of rank 41 with 5 invertible objects and Frobenius-Perron dimension array as in row 15 in Table~\ref{table: rank 41 dim arrays}. Then $\C\cong \mathcal D^{\mathbb Z_5}$, where $\mathcal D$ is a categorification of the ring $R_{5,H}$ as defined in~\cite[Definition 1.3]{JL}, and $H$ is a finite abelian group of order $3^4$.
\end{proposition}

\begin{proof}
	Let $\mathcal C$ be as in the statement, and consider the Tannakian subcategory $(\mathcal C_{\mathrm{ad}})_{\mathrm{pt}}\cong \Rep(\mathbb Z_5)$. 
 
Note that all non-invertible simple objects in $\Cad$ are fixed by the action of $\GC\cong\Z_5$: otherwise, the rank of the de-equivariantization $(\Cad)_{\Z_5}$ (which is modular by Section~\ref{section: equivariantization}) would be of rank $33$ with $31$ invertible objects, which is impossible by Proposition~\ref{prop: rank 33}. Hence, $\mathcal C_{\mathbb Z_5}$ has 81 invertible objects. On the other hand, each non-trivial component $\mathcal C_g$ contains 5 simple objects which are not fixed by the action, see Proposition~\ref{lemma: fixed simple}. Hence the simple objects of each non-trivial component induce an object of dimension 9 in $\mathcal C_{\Z_5}$. That is, $\mathcal C_{\Z_5}$ is $\Z_5$-graded of rank 85, with 81 invertible objects in the trivial component, and 4 simple objects of dimension $9$. Let $(\mathcal C_{\Z_5})_{\mathrm{pt}}\cong \operatorname{Vec}_H^{\omega}$ for some abelian group $H$ of order 81 and 3-cocycle $\omega$. Then computing the fusion rules for $\mathcal C_{\Z_5}$ shows that $\mathcal K(\mathcal C_{\Z_5})\cong R_{5,H}$, where  $\mathcal K(\mathcal C_{\Z_5})$  denotes the Grothendieck ring of $\mathcal C_{\Z_5}$ and $R_{5,H}$ is the fusion ring  defined on~\cite[Definition 1.3]{JL}. 
 
We have shown that, if such a category $\mathcal C$ exists, its de-equivariantization $\mathcal C_{\mathbb Z_5}$ must be equivalent to  some categorification of $R_{5,H}$, and the statement follows. 
\end{proof}

\begin{remark}
Categorifications of $R_{5,H}$ were parametrized in~\cite[Proposition 3.1]{JL}. Let $\mathcal D$ be such a categorification. To obtain a category $\mathcal C$ as in row 15 in Table~\ref{table: rank 41 dim arrays}, by the lemma above we would need to have a (non-trivial) action of $\mathbb Z_5$  on $\mathcal D$. Since this induces an action of $\mathbb Z_5$ on $\mathcal D_{\mathrm{pt}}\cong \operatorname{Vec}_H$, we must have that $5||\operatorname{Aut}(H)|$. Since $H$ is a finite abelian group of order 81, it follows that we should have $H\cong \mathbb Z_3\times \mathbb Z_3\times \mathbb Z_3\times \mathbb Z_3$.
\end{remark}

\begin{proposition}
    Let $\C$ be an odd-dimensional MTC of rank $43$. Then $\C$ must be pointed, perfect, or have $9$ invertible objects and Frobenius-Perron dimension array of $\Cad$ as in one of the rows 5--8 or 11 of Table~\ref{table: rank 43 dim arrays 2}.
\end{proposition}
\begin{proof}
    We can discard all values of $\sizeGC$ except for $43$, $35$, $27$, $19$, $11$, $9$, $7$, $5$, $3$, and $1$ using Lemma~\ref{lemma: rank linear combination}. Then, we discard most cases directly using previously shown lemmas, which are displayed in Appendix~\ref{section: case list}. The following non-pointed and non-perfect cases remain:
    \begin{itemize}
        \item {[(1, 19), (8, 3)]},
        \item {[(2, 11), (7, 3)]},
        \item {[(1, 25), (2, 9)]}.
    \end{itemize}

\textbf{Case [(1, 19), (8, 3)]}: by Lemma~\ref{thm:trivial}, $|\mathcal{G}(\mathcal C_{\mathrm{ad}})|=3$ or 9. 

For the case $\sizeGCad=3$, since $\sizeGC=9=3\cdot\sizeGCad$ and $\rank(\Cad)=19$, this case is discarded by Corollary~\ref{corollary: three components rank Cad}.

 For the case $|\mathcal{G}(\mathcal C_{\mathrm{ad}})|=9$, we use Algorithm~\ref{alg: Cad dim generation} to compute the potential Frobenius-Perron dimensions of non-invertible simple objects in $\mathcal C_{\mathrm{ad}}$. This produces the cases listed in Table~\ref{table: rank 43 dim arrays 2}, which exhibits these dimensions in columns 1 through 5, excluding duplicates due to duals. We will discard potential arrays on a case-by-case basis.

\begin{table}[h]
	\begin{center}
		\begin{tabular}{ c | c | c | c | c | c | c }
			\# & $\FPdim(\C)$ & 1 & 2 & 3 & 4 & 5   \\
			\hline
			1& $3^6 \cdot 5^4 \cdot 19$		&675	&135&	75&	27&	27\\
			2&$3^4 \cdot 5^2 \cdot  19$	&	45&	9&	3&	3&	3\\
			3&$3^6 \cdot 7^2 \cdot 19$	&	189	&27	&27	&21	&9\\
			4&$3^6 \cdot 19$	&	27	&3&	3&	3	&3\\
			5&$3^7 \cdot 5^4$	&225&	135&	75&	27	&27\\
			6&$3^5 \cdot 5^2$		&15	&9&	3	&3&	3\\
			7&$3^7 \cdot 7^2$	&	63&	27	&27	&21	&9\\
			8&$3^7$	&9	&	3&	3&	3&	3\\
			9&$3^4 \cdot 43$	&	9&	9	&3&	3	&3\\
			10&$3^4 \cdot  59$	&	9&	9&	9&	3&	3\\
			11&$3^5 \cdot 5^2$	&	9&	9&	9&	9&	3\\
			12&$3^4 \cdot 7 \cdot 13$		&9	&9	&9&	9&	9\\
			13&$3^4 \cdot 11$	&3	&3&	3&	3&	3\\
		\end{tabular}
		\caption{Potential dimension arrays of $\Cad$ for $\C$ an MTC of rank $43$ with $9$ invertible objects, $\rank(\Cad)=19$, and $\sizeGCad=9$. All but lines 5--8 and 11 are discarded.}
		\label{table: rank 43 dim arrays 2}
	\end{center}
\end{table}

In each case, by Proposition~\ref{lemma: fixed simple}, the Frobenius-Perron dimensions of all simple objects in the components of rank $3$ are equal. As a result, we know that $\FPdim(\C)$ is of the form $27\di^2$, so $\frac{\FPdim(\C)}{27}$ must be a perfect square. Hence, we can discard cases 1--4, 9, 10, 12, and 13. This leaves rows 5--8 and 11, as desired.

\textbf{Case [(2, 11), (7, 3)]}: in this case $\sizeGC=9$. By Lemma~\ref{lemma: odd number of components of rank}, we know that $\rank(\Cad)=3$. Since $\Cadpt$ is non-trivial by Lemma~\ref{thm:trivial} and $\sizeGC=9$, we also have $|\mathcal{G}(\Cad)|=3$, so $\Cad$ is pointed and $\FPdim(\Cad)=3$. This is a contradiction since the Frobenius-Perron dimension of the components of rank $11$ must be greater than $3$.

\textbf{Case [(1, 25), (2, 9)]}: note that by Lemma~\ref{thm:trivial}, $|\mathcal{G}(\mathcal C)|=|\mathcal{G}(\mathcal C_{\mathrm{ad}})|=3$.

We use Algorithm~\ref{alg: Cad dim generation} to compute the potential Frobenius-Perron dimensions of non-invertible simple objects in $\mathcal C_{\mathrm{ad}}$. After removing the cases that violate Remark~\ref{remark: fixed fpdim divisible by p}, this produces the cases listed in Table~\ref{table: rank 43 dim arrays 3}, which exhibits these dimensions in columns 1 through 11, excluding duplicates due to duals. We will discard all potential arrays on a case-by-case basis.

\begin{table}[h]
	\begin{center}
		\begin{tabular}{ c | c | c | c | c | c | c | c |c | c| c | c | c| }
			\# & $\FPdim(\C)$ & 1 & 2 & 3 & 4 & 5  &6 &7 &8&9&10&11 \\
			\hline
			1&$3^2 \cdot 5^2 \cdot 11$	&	15	&5	&5	&5	&5&	5&	5&	3&	3&	3&	3\\
			2&$3^8\cdot 5^2 \cdot 19$		&405	&405	&405	&135	&81	&45	&15	&15	&5	&5	&5\\
			3&$3^4 \cdot 11^2 \cdot 19$&	99&	99&	99&	33	&11&	11&	11&	9&	9&	3&	3\\
			4&$3^4 \cdot 13^2 \cdot 19$	&	117	 &117	&117	&39	&13	&13&	13&	9&	9&	9	&3\\
			5&$3^4 \cdot 5^2 \cdot 19$	&	45	&45	&45	&15&	5	&5	&5	&3	&3	&3&	3\\
			6&$3^2 \cdot 5^2 \cdot 13^2 \cdot 19$	&195	&195	&195&	39&	39&	39&	39	&15	&3	&3	&3\\
			7&$3^2 \cdot 5^4 \cdot 17^2 \cdot 19$	&	1275	&1275	&1275&	255&	255&	255&	255&	75&	51&	51&	3\\
			8&$3^2 \cdot 5^4 \cdot 19$	&	75	&75&	75&	15&	15&	15&	15&	3&	3	&3	&3\\
			9&$3^4 \cdot 5^2 \cdot 19$	&45&	45&	45&	9&	9&	9&	5&	5&	5&	3&	3\\
			10&$3^2 \cdot 7^2 \cdot 19$	&	21&	21&	21&	3&	3	&3	&3	&3	&3	&3	&3\\
			11&$3^9 \cdot 5^2$		&135&	135&	135&	135&	81&	45&	15&	15&	5&	5&	5\\
			12&$3^5 \cdot 11^2$	&	33&	33&	33&	33&	11&	11&	11&	9&	9&	3&	3\\
			13&$3^5 \cdot  13^2$	&39&	39&	39&	39&	13&	13&	13&	9&	9&	9&	3\\
			14&$3^5 \cdot 5^2$		&15	&15&	15&	15&	5	&5	&5	&3	&3	&3&	3\\
			15&$3^3 \cdot 5^2 \cdot 13^2$	&	65&	65&	65&	39&	39&	39&	39&	15&	3&	3&	3\\
			16&$3^3 \cdot 5^4 \cdot 17^2$	&	425&	425&	425&	255&	255&	255&	255	&75	&51	&51	&3\\
			17&$3^3 \cdot 5^4$		&25&	25&	25&	15&	15&	15&	15&	3&	3&	3&	3\\
			18&$3^5 \cdot 5^2$		&15&	15&	15&	9&	9	&9	&5&	5&	5&	3&	3\\
			19&$3^3 \cdot 7^2$	&	7	&7&	7&	3&	3&	3&	3&	3&	3	&3	&3\\
			20&$3^5 \cdot 5^2$	&	15&	15&	9&	9	&9	&9	&9	&9	&5&	5&	5\\
			21&$3^2 \cdot 5^2 \cdot 43$	&	15&	15&	15&	15&	15&	15&	15	&3	&3	&3&	3\\
			22&$3^2 \cdot 67$	&	3	&3	&3	&3	&3	&3	&3	&3	&3	&3	&3
		\end{tabular}
		\caption{Potential dimension arrays of $\Cad$ for $\C$ an MTC of rank $43$ with $3$ invertible objects, $\rank(\Cad)=25$, and $\sizeGCad=3$. All discarded.}
		\label{table: rank 43 dim arrays 3}
	\end{center}
\end{table}
Dimensions in columns 1--11 that appear a number of times not divisible by 3 must correspond with simple objects that are fixed by the action of $\mathcal G(\mathcal C_{\mathrm{ad}})\cong \mathbb Z_3$. In case 3, this implies the simple objects of Frobenius-Perron dimension 3 must be fixed by the action. Hence for each one of these, we get 3 invertible objects in $(\mathcal C_{\mathrm{ad}})_{\mathbb Z_3}$, and so there are a total of 13 invertible objects in $(\mathcal C_{\mathrm{ad}})_{\mathbb Z_3}$. This is a contradiction, since 13 does not divide $\FPdim((\mathcal C_{\mathrm{ad}})_{\mathbb Z_3})=3^2\cdot 11^2\cdot 19.$ Cases 4, 7, 9, 12--13, 16 and 18 are discarded in the same way.

In cases 1, 8, 10, 17, 19 and 21,  $\FPdim((\mathcal C_{\mathrm{ad}})_{\mathbb Z_3})=mp^k,$ where $p$ is an odd prime, $k\leq 4$ and $m$ is an odd square-free integer. Since $(\mathcal C_{\mathrm{ad}})_{\mathbb Z_3}$ is modular, this implies it should be pointed, see~\cite{DN}. Hence the Frobenius-Perron dimension of all simple objects in $\mathcal C_{\mathrm{ad}}$ should be 3, which is not true in any of these cases.

In cases 2, 11 and 20, $\FPdim((\mathcal C_{\mathrm{ad}})_{\mathbb Z_3})=mp^kq^l,$ where $p$ and $q$ are odd primes, $k,l\geq 1$, and $m$ is an odd square-free integer. It follows that $(\mathcal C_{\mathrm{ad}})_{\mathbb Z_3}$ is weakly-group-theoretical and thus solvable, see~\cite{N1, NP}. Hence by~\cite[Proposition 4.5]{ENO2} $(\mathcal C_{\mathrm{ad}})_{\mathbb Z_3}$ should have a non-invertible simple object, which contradicts the fact that in all these cases, $\mathcal C_{\mathrm{ad}}$ has no simple object of Frobenius-Perron dimension 3.

In case 15, $\FPdim((\mathcal C_{\mathrm{ad}})_{\mathbb Z_3})=3\cdot 5^2 \cdot 13^2,$ and so again $(\mathcal C_{\mathrm{ad}})_{\mathbb Z_3}$ should be solvable. There are 6 simple objects of Frobenius-Perron dimension 3 in $\mathcal C_{\mathrm{ad}}$. By Remark~\ref{remark: fixed fpdim divisible by p}, either all of them are fixed, or none of them are. So $\mathcal G((\mathcal C_{\mathrm{ad}})_{\mathbb Z_3})$ has either size 1 or 19 , respectively. The former is not possible by~\cite[Proposition 4.5]{ENO2}. The latter is also not possible, since 19 does not divide $\FPdim((\mathcal C_{\mathrm{ad}})_{\mathbb Z_3})=3\cdot 5^2 \cdot 13^2$.

In case 5, $\FPdim((\mathcal C_{\mathrm{ad}})_{\mathbb Z_3})=3^2\cdot 5^2 \cdot 19$. Note that there are 8 simple objects of Frobenius-Perron dimension 3 in $\mathcal C_{\mathrm{ad}}$. By Remark~\ref{remark: fixed fpdim divisible by p}, either exactly 2 of them are fixed, or all of them are. So $(\mathcal C_{\mathrm{ad}})_{\mathbb Z_3}$ has either 25 or 7  invertible objects, respectively. The latter is not possible, since 7 does not divide $\FPdim((\mathcal C_{\mathrm{ad}})_{\mathbb Z_3})=3^2\cdot 5^2 \cdot 19$. If the former is true, then because 25 does not divide $\FPdim(((\mathcal C_{\mathrm{ad}})_{\mathbb Z_3})_{\mathrm{ad}})=3^2\cdot 19$, it must be the case that $((\mathcal C_{\mathrm{ad}})_{\mathbb Z_3})_{\mathrm{ad}}$ is perfect. But from~\cite[Theorem 1.6]{ENO2} we get that $((\mathcal C_{\mathrm{ad}})_{\mathbb Z_3})_{\mathrm{ad}}$  is solvable and so it should contain a non-trivial invertible object by~\cite[Proposition 4.5]{ENO2}, which is a contradiction. Cases 6 and 14 are discarded in the same way. 

Lastly, in case 22 we have that $\FPdim(\C)=3^2\cdot 67$, and so it should be pointed, a contradiction. 
\end{proof}

\begin{proposition}\label{prop: rank 45}
    Let $\C$ be an odd-dimensional MTC of rank $45$. Then $\C$ must be either pointed or perfect.
\end{proposition}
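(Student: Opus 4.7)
Plan.

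The plan is to follow the template used for the propositions treating ranks $35$ through $43$. First, Lemma~\ref{lemma: rank linear combination} restricts $\sizeGC$ to the finite list $\{1, 3, 5, 7, 9, 13, 15, 21, 29, 37, 45\}$. For each $\sizeGC \notin \{1, 45\}$, I would enumerate the multisets of component ranks of the universal grading summing to $45$ with all parts congruent modulo $8$, and then discard cases via Lemmas~\ref{lemma: rank more than 1} and~\ref{lemma: rank not divisible by p} together with Corollary~\ref{corollary: three components rank Cad}. I expect this bookkeeping to leave exactly four non-trivial surviving configurations: $\{15, 15, 15\}$ with $\sizeGC = 3$; $\{5^9\}$ with $\sizeGC = 9$; $\{3^{15}\}$ with $\sizeGC = 15$; and $\{1^{18}, 9, 9, 9\}$ with $\sizeGC = 21$.

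Three of these cases admit short direct arguments. For $\{3^{15}\}$, Lemma~\ref{thm:trivial} forces $\sizeGCad = 3$, so $\Cad$ is pointed of Frobenius-Perron dimension $3$; then every non-adjoint component has Frobenius-Perron dimension $3$ and rank $3$, which forces all its simples to be invertible and hence $\sizeGC = 45$, a contradiction. For $\{5^9\}$, $\sizeGCad = 3$ is forced, and with $d$ the common dimension of the two non-invertible simples of $\Cad$, the divisibility $d^2 \mid 9(3 + 2d^2)$ implies $d^2 \mid 27$, so $d = 3$ and $\FPdim(\Cad) = 21$; a case check of decompositions of $21$ as $k + \sum e_i^2$ with $k$ invertibles and $5 - k$ odd non-invertible dimensions at least $3$ shows that $k = 3$ in every non-adjoint component, yielding $27$ invertibles in $\C$ and contradicting $\sizeGC = 9$. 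For $\{1^{18}, 9, 9, 9\}$, Proposition~\ref{proposition: invertible implies equal rank} places every invertible in one of the three rank-$9$ components, and each of the eighteen rank-$1$ components contributes a single simple of common dimension $d$ with $d^2 = \FPdim(\Cad)$; the case $\sizeGCad = 3$ fills the two non-adjoint rank-$9$ components entirely with invertibles, forcing $d = 3$ and an immediate dimensional impossibility on $\Cad$, while the case $\sizeGCad = 7$ reduces via $2 d_0^2 + 7 = d^2$ and $d_0^2 \mid 21 d^2$ to $d_0^2 \mid 147$, leaving only $d_0 = 7$, which makes $d^2 = 105$ non-square.

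The remaining case $\{15, 15, 15\}$ with $\sizeGC = 3$ is the main obstacle. Here $\sizeGCad = 3$ is forced, so $\Cpt \subseteq \Cad$, and $\Cad$ consists of three invertibles together with twelve non-invertible simples in six dual pairs, while the thirty simples outside $\Cad$ are all non-invertible. I would invoke Algorithm~\ref{alg: Cad dim generation} to enumerate all candidate Frobenius-Perron dimension arrays for the non-invertible simples of $\Cad$, and then discard each array by combining Remark~\ref{remark: fixed fpdim divisible by p} (any $\GC$-fixed simple has dimension divisible by $3$); the pointedness and solvability consequences for $(\Cad)_{\mathbb{Z}_3}$ coming from~\cite[Corollary 4.13]{DN},~\cite[Theorem 1.6]{ENO2}, and~\cite[Proposition 4.5]{ENO2}; the previously-established pointedness of odd-dimensional MTCs of rank at most $23$ from Theorem~\ref{thm: pointed 17 to 23}; and Proposition~\ref{proposition: semidirect product} whenever $\FPdim(\C)$ takes the form $p^2 q^a$. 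The array-by-array analysis is expected to parallel that of the $\{25, 9, 9\}$ case in the rank-$43$ proposition, with the trickiest sub-cases requiring a combination of $\GC$-orbit counting on simples outside $\Cad$ and the modular structure of the de-equivariantization.
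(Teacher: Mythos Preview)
Your overall strategy matches the paper's: the same list of candidate $\sizeGC$ values (the paper's stated list accidentally omits $29$, but its case list covers it), the same enumeration of component-rank multisets, the same filtering by Lemmas~\ref{lemma: rank more than 1} and~\ref{lemma: rank not divisible by p}, and the same four surviving configurations.

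For two of those four, your arguments differ from the paper's but are cleaner. In the case $\{5^9\}$, the paper de-equivariantizes $\Cad$ by $\Z_3$, obtains a rank-$7$ pointed MTC, and then argues that some non-adjoint component has no invertibles and hence $\FPdim\geq 45>21$; your direct divisibility $d^2\mid 27$ and the partition check giving $27$ invertibles is more elementary and self-contained. In the case $\{9,9,9,1^{18}\}$ with $\sizeGC=21$, the paper splits on $\sizeGCad\in\{3,7\}$ and invokes~\cite[Lemma~5.1(a)]{CP} in both subcases; your alternative via Proposition~\ref{proposition: invertible implies equal rank} and the equations $d^2=\FPdim(\Cad)$, $2d_0^2+7=d^2$, $d_0^2\mid 147$ avoids that citation entirely.

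Where you diverge from the paper most is in expectations for $\{15,15,15\}$. You anticipate an analysis on the scale of the $22$-row table for rank $43$, but in fact Algorithm~\ref{alg: Cad dim generation} produces only two candidate arrays for the six non-invertible dual pairs in $\Cad$: the array $[15,15,3,3,3,3]$ with $\FPdim(\C)=3^2\cdot5^2\cdot 13$, and $[3,3,3,3,3,3]$ with $\FPdim(\C)=3^2\cdot 37$. The second dies immediately since $\FPdim(\C)=mp^2$ with $m$ squarefree forces $\C$ pointed. The first dies because $(\Cad)_{\Z_3}$ has $\FPdim=5^2\cdot 13$, hence is pointed, yet the dimension-$15$ simples of $\Cad$ would produce simples of dimension $5$ or $15$ there. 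None of the heavier tools you list (orbit-counting outside $\Cad$, Proposition~\ref{proposition: semidirect product}, the rank-$\leq 23$ pointedness theorem) are needed.
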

\begin{proof}
    We can discard all values of $\sizeGC$ except for $45$, $37$, $29$, $21$, $15$, $13$, $9$, $7$, $5$, $3$, and $1$ using Lemma~\ref{lemma: rank linear combination}. Then, we discard most cases directly using previously shown lemmas, which are displayed in Appendix~\ref{section: case list}. The following non-pointed and non-perfect cases remain:
    \begin{itemize}
        \item {[(15, 3)]},
        \item {[(9, 5)]},
        \item {[(3, 15)]}.
    \end{itemize}

    \textbf{Case [(15, 3)]}: since $\Cadpt$ is non-trivial, we know $\sizeGCad=3=\rank(\Cad)$, and $\FPdim(\Cad)=3$. That means that $\FPdim(\C_g)=3$ for all components $\C_g$ of the universal grading of $\C$. But that means that all simple objects of $\C$ are invertible, and $\C$ is pointed, a contradiction.

    \textbf{Case [(9, 5)]}: in this case $\sizeGC=9$, $\sizeGCad=3$, and $\GCad\cong\Z_3$. Consider the de-equivariantization $(\Cad)_{\Z_3}$ of $\Cad$ by $\GCad\cong \Z_3$. The three invertible objects of $\Cad$ create one simple object in $(\Cad)_{\Z_3}$. Each of the two non-invertible simple objects in $\Cad$ must be fixed under the action of $\GCad$ on the simple objects of $\Cad$, and as such, each of them creates three simple objects in $(\Cad)_{\Z_3}$. Therefore, $\rank((\Cad)_{\Z_3})=1+3\cdot 2 = 7$, and $(\Cad)_{\Z_3}$ is pointed by~\cite[Theorem 4.5]{BR}. This means that $\FPdim((\Cad)_{\Z_3})=7$ and $\FPdim(\Cad)=3\cdot 7 = 21$, which implies that $\FPdim(\C_g)=21$ for every component $\C_g$ of the universal grading of $\C$. However, there are only $6$ invertible objects in the $8$ non-adjoint components. That means that at least one of the components must contain no invertible objects, and the Frobenius-Perron dimensions of that component must be at least $5\cdot 9 > 21$, leading to a contradiction, and thus discarding the case. 
    
\textbf{Case [(3, 15)]}: note that by Lemma~\ref{thm:trivial}, $|\mathcal{G}(\mathcal C)|=|\mathcal{G}(\mathcal C_{\mathrm{ad}})|=3$.

We use Algorithm~\ref{alg: Cad dim generation} to compute the potential Frobenius-Perron dimensions of non-invertible simple objects in $\mathcal C_{\mathrm{ad}}$. This produces the cases listed in Table~\ref{table: rank 45 dim arrays}, which exhibits these dimensions in columns 1 through 6, excluding duplicates due to duals.

We will discard all potential arrays on a case-by-case basis.

\begin{table}[h]
	\begin{center}
		\begin{tabular}{ c | c | c | c | c | c | c | c }
			\# & $\FPdim(\C)$ & 1 & 2 & 3 & 4 & 5 & 6  \\
			\hline
			1 &$ 3^2 \cdot 5^2 \cdot 13$	&15	&15	&3	&3	&3&	3 \\
			2 & $3^2 \cdot 37$	&	3&	3	&3	&3	&3	&3\\
		\end{tabular}
		\caption{Potential dimension arrays of $\Cad$ for $\C$ an MTC of rank $45$ with $3$ invertible objects, $\rank(\Cad)=15$, and $\sizeGCad=3$. All discarded.}
		\label{table: rank 45 dim arrays}
	\end{center}
\end{table}

In case 1, we would have that $(\mathcal C_{\mathrm{ad}})_{\mathbb Z_3}$ is an MTC of Frobenius-Perron dimension $5^2\cdot 13$, see Section~\ref{section: equivariantization}. It follows that $(\mathcal C_{\mathrm{ad}})_{\mathbb Z_3}$ is pointed, which is not possible. In fact, the simple objects of Frobenius-Perron dimension 15 in $\mathcal C_{\mathrm{ad}}$ would generate simple objects of dimensions either 15 or 5 in $(\mathcal C_{\mathrm{ad}})_{\mathbb Z_3}$. Lastly, in case 2, $\FPdim(\mathcal C)=3^2\cdot 37$ and so $\mathcal C$ should be pointed, a contradiction.

This means that $\C$ must be pointed or perfect, as desired.
\end{proof}

\begin{proposition}
    Let $\C$ be an odd-dimensional MTC of rank $47$. Then $\C$ must be either pointed or perfect.
\end{proposition}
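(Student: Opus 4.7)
The plan is to mirror the proofs already carried out for ranks $37$, $39$, $43$, and $45$. First, Lemma~\ref{lemma: rank linear combination} restricts $\sizeGC$ to the odd integers $s$ for which $47 = as + 8b$ has a nonnegative integer solution, namely $s \in \{1, 3, 5, 7, 13, 15, 23, 31, 39, 47\}$; the extremes $s = 1$ and $s = 47$ correspond to the perfect and pointed cases. For each intermediate $s$, I would enumerate the multisets of ranks of the universal-grading components, using that each component rank is odd, all of them are congruent modulo $8$, and they sum to $47$.

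A large block of cases dies from the combinatorial lemmas in Section~\ref{sec: general results}. When $s$ is one of the primes $7$, $13$, $23$, $31$, Lemma~\ref{lemma: rank not divisible by p} kills everything, because none of the permissible component ranks is divisible by $s$ in any of these multisets, whereas the lemma requires all but one component to have rank divisible by $s$. The case $s = 5$ falls the same way: only rank $35$ among the permissible ones is divisible by $5$ and can appear at most once. For $s = 39$, Lemma~\ref{lemma: rank more than 1} immediately discards the single candidate $\{9, 1, 1, \ldots, 1\}$. For $s = 15$, Lemma~\ref{lemma: rank more than 1} with $p = 3$ plus Lemma~\ref{lemma: odd number of components of rank} eliminate all configurations except $\{17, 9, 9, 1, \ldots, 1\}$, since the alternative $\{9, 9, 9, 9, 1, \ldots, 1\}$ would force $\rank(\Cad) = 1$ and contradict Lemma~\ref{thm:trivial}. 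For $s = 3$, Lemma~\ref{lemma: rank not divisible by p} leaves only $\{21, 21, 5\}$.

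The two remaining configurations are the main obstacles, and I would handle them by the same strategy. Lemma~\ref{lemma: odd number of components of rank} pins down $\rank(\Cad) = 17$ in the first case and $\rank(\Cad) = 5$ in the second. Since in each multiset the value of $\rank(\Cad)$ is realized by exactly one component, Proposition~\ref{proposition: invertible implies equal rank} forces all invertibles into $\Cad$, i.e.\ $\sizeGCad = \sizeGC$. Then $\Cad$ consists of all invertibles plus a single non-invertible dual pair $X, X^*$ of Frobenius-Perron dimension $d > 1$, giving $\FPdim(\Cad) = \sizeGC + 2d^2$ and $\FPdim(\C) = \sizeGC(\sizeGC + 2d^2)$. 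The divisibility $d^2 \mid \FPdim(\C)$ then forces $d^2 \mid \sizeGC^2$, so $d \mid \sizeGC$.

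For $s = 15$ this yields $d \in \{3, 5, 15\}$, and the corresponding values $15 + 2d^2 \in \{33, 65, 465\}$ are not perfect squares, contradicting the requirement that the twelve rank-$1$ components have Frobenius-Perron dimension equal to $\FPdim(\Cad)$ and contain a single simple object. For $s = 3$ it yields $d = 3$, so every universal-grading component has Frobenius-Perron dimension $21$; but a non-adjoint component has rank $21$, which squeezed against dimension $21$ forces every one of its simple objects to have Frobenius-Perron dimension $1$, producing at least $45$ invertibles in $\C$ against $\sizeGC = 3$. The main obstacle is the coordinated treatment of these two residual cases; once Proposition~\ref{proposition: invertible implies equal rank}, Lemma~\ref{lemma: odd number of components of rank}, and Lemma~\ref{thm:trivial} have pinned down the group-theoretic data, each reduces to an explicit numerical check that I expect to close cleanly.
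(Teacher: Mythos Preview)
Your proof is correct, and in the two residual cases it is actually more elementary than the paper's. The overall casework structure matches the paper's approach: restrict $\sizeGC$ via Lemma~\ref{lemma: rank linear combination}, enumerate rank-multisets, then kill them with Lemmas~\ref{lemma: rank more than 1}, \ref{lemma: rank not divisible by p}, and~\ref{lemma: odd number of components of rank}. (You even catch $s=7$, which the paper's list omits; those configurations die immediately by Lemma~\ref{lemma: rank not divisible by p} since no rank $\equiv 1\pmod 8$ below $47$ is divisible by $7$.)

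The genuine difference is in how you dispatch the two surviving configurations. For $\{17,9,9,1,\ldots,1\}$ with $\sizeGC=15$, the paper simply runs Algorithm~\ref{alg: dim generation} and reports no solutions; you instead use Proposition~\ref{proposition: invertible implies equal rank} to force $\sizeGCad=15$, reduce $\Cad$ to the invertibles plus a single dual pair of dimension $d\mid 15$, and observe that $\FPdim(\Cad)=15+2d^2\in\{33,65,465\}$ is never a square, contradicting the existence of rank-$1$ components. For $\{21,21,5\}$ with $\sizeGC=3$, the paper computes $\FPdim(\Cad)=21$ via the de-equivariantization $(\Cad)_{\Z_3}$ together with the classification of odd-dimensional MTCs of rank $7$; you obtain the same value from $d^2\mid 9$, hence $d=3$, and then both arguments finish identically by noting that a rank-$21$ component of Frobenius--Perron dimension $21$ would consist entirely of invertibles. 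Your route avoids both the computer search and the de-equivariantization machinery, at the cost of leaning on Proposition~\ref{proposition: invertible implies equal rank} and the divisibility $d^2\mid\FPdim(\C)$; the paper's route is more uniform with its treatment of other ranks but less self-contained for this particular proposition.
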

\begin{proof}
    We can discard all values of $\sizeGC$ except for $47$, $39$, $31$, $23$, $15$, $13$, $7$, $5$, $3$, and $1$ using Lemma~\ref{lemma: rank linear combination}. Then, we discard most cases directly using previously shown lemmas, which are displayed in Appendix~\ref{section: case list}. The following non-pointed and non-perfect case remains:
    \begin{itemize}
        \item {[(2, 21), (1, 5)]}.
    \end{itemize}

    \textbf{Case [(2, 21), (1, 5)]}: similarly to the case [(9, 5)] in the proof of Proposition~\ref{prop: rank 45} for rank $45$, considering the de-equivariantization of $\Cad$ by $\GCad\cong\Z_3$, we know that $\FPdim(\Cad)=21$. However, since all invertible objects are in $\Cad$, the non-adjoint components of the universal grading of $\C$ must have Frobenius-Perron dimension of at least $21\cdot 9 > 21$, leading to a contradiction, and discarding this case. 

    This means that $\C$ must be pointed or perfect, as desired.
\end{proof}

\begin{proposition}\label{proposition: rank 49}
    Let $\C$ be an odd-dimensional MTC of rank $49$. Then $\C$ must be pointed, perfect, or have $3$ or $5$ invertible objects. Additionally, if $\sizeGC=3$, then the ranks of the components of the universal grading of $\C$ must be [(1, 43), (2, 3)], and if $\sizeGC=5$, then the ranks must be [(1, 29), (4, 5)].
\end{proposition}
\begin{proof}
    We can discard all values of $\sizeGC$ except for $49$, $41$, $33$, $25$, $17$, $11$, $9$, $7$, $5$, $3$, and $1$ using Lemma~\ref{lemma: rank linear combination}. Then, we discard most cases directly using previously shown lemmas, which are displayed in Appendix~\ref{section: case list}. The following non-pointed and non-perfect cases remain:
    \begin{itemize}
       
        \item {[(7, 7)]},
        \item {[(1, 29), (4, 5)]},
        \item {[(1, 43),(2, 3)]}.
    \end{itemize}

    \textbf{Case [(7, 7)]}: since $\Cadpt$ is non-trivial and $\sizeGC=7$, we must have $\sizeGCad=7=\rank(\Cad)$, which means $\Cad$ is pointed. Then the Frobenius-Perron dimension of all components must be $7$, and $\C$ is pointed, which is a contradiction.

    This means that $\C$ must be pointed, perfect, or have $3$ or $5$ invertible objects, with the last cases represented by [(1, 43), (2, 3)] and [(1, 29), (4, 5)], respectively, as desired.
\end{proof}

The next proposition provides restrictions on an odd-dimensional MTC $\C$ of rank $49$ with $\sizeGC=5$.

\begin{proposition}
    Let $\C$ be an odd-dimensional MTC of rank $49$ with $5$ invertible objects. If $7\nmid\FPdim(\C)$, then $\mathcal C\cong\Rep(D^\omega(\Z_{11}\rtimes\Z_5))$, with $\Z_{11}\rtimes\Z_5$ non-abelian and $\omega$ a $3$-cocycle.
\end{proposition}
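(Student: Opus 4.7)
The plan is to combine the adjoint algorithm from Section~\ref{section: adjoint algorithm} with the hypothesis $7\nmid\FPdim(\C)$ to force $\FPdim(\C)=5^2\cdot 11^2$, and then invoke Proposition~\ref{proposition: semidirect product}(b) to identify $\C$ as a twisted Drinfeld double.

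First, I set up the structure. By Proposition~\ref{proposition: rank 49}, the universal grading of $\C$ has component ranks $\{29,5,5,5,5\}$, so $\rank(\Cad)=29$. Since $\sizeGC=5$ is prime and $\Cadpt$ is non-trivial by Lemma~\ref{thm:trivial}, we have $\sizeGCad=5$. By Corollary~\ref{corollary: same dim outside Cad}, all $20$ simple objects outside $\Cad$ share a common Frobenius-Perron dimension. Hence the problem reduces to determining the $24$ non-invertible simple objects in $\Cad$, which come in $12$ pairs of duals.

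Next, I run Algorithm~\ref{alg: Cad dim generation} with $\rank(\Cad)=29$, $\sizeGCad=5$, and $\sizeGC=5$ to produce a finite list of candidate dimension arrays, and immediately discard all arrays in which $7\mid\FPdim(\C)$ using the hypothesis. For the remaining cases I would apply, in order: (i) Remark~\ref{remark: fixed fpdim divisible by p}, so that any dimension appearing a number of times not divisible by $5$ must itself be divisible by $5$; (ii) Proposition~\ref{proposition: semidirect product}(a), which forces $\FPdim(\C)$ to satisfy divisibility relations between $5$ and the other prime factors; (iii) de-equivariantization: since $(\Cad)_{\Z_5}$ is modular and $\FPdim((\Cad)_{\Z_5})=\FPdim(\C)/25$, its rank and number of invertibles (determined by the fixed/non-fixed decomposition of the simples in $\Cad$) must be consistent with the low-rank classifications in Theorem~\ref{thm: pointed 17 to 23} and with~\cite{DN,ENO2,NP} for categories whose Frobenius-Perron dimension is $mp^a$ or $mp^aq^b$ with $m$ square-free. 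The expected surviving case has all $24$ non-invertible simples of $\Cad$ of dimension $5$ (all fixed by the $\GC$-action) and all $20$ simples outside $\Cad$ of dimension $11$, yielding $\FPdim(\C)=5+24\cdot 25+20\cdot 121 = 3025 = 5^2\cdot 11^2$.

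Having pinned down $\FPdim(\C)=5^2\cdot 11^2$ with $\sizeGC=5$, I then apply Proposition~\ref{proposition: semidirect product}(b) with $p=5$, $q=11$, $a=2$ to conclude $\C\simeq\Rep(D^\omega(\Z_{11}\rtimes\Z_5))$ for some $3$-cocycle $\omega$, where $\Z_{11}\rtimes\Z_5$ is non-abelian (which is consistent with $5\mid 11-1$).

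The main obstacle will be the case-by-case elimination in step two: Algorithm~\ref{alg: Cad dim generation} applied to $\rank(\Cad)=29$ will produce substantially more candidate arrays than in the smaller-rank cases already treated, and for each surviving case the de-equivariantization $(\Cad)_{\Z_5}$ has Frobenius-Perron dimension and rank large enough that no single prior lemma kills it outright. The hypothesis $7\nmid\FPdim(\C)$ is essential precisely because it removes the otherwise problematic families where $q=7$ enters the analysis (so that Proposition~\ref{proposition: semidirect product} is not prematurely usable), and I expect that most candidate arrays succumb either to a parity/divisibility contradiction against $\FPdim(\C)/25$ or to a contradiction with~\cite[Proposition 4.5]{ENO2} forcing a non-trivial invertible object in a solvable de-equivariantization that the array does not provide.
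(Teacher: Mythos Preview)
Your overall strategy matches the paper's: reduce to the component structure $\{29,5,5,5,5\}$, run the adjoint algorithm on $\Cad$, discard cases with $7\mid\FPdim(\C)$, eliminate the remaining candidates by de-equivariantization and structural arguments, and finish with Proposition~\ref{proposition: semidirect product}(b) once $\FPdim(\C)=5^2\cdot 11^2$ is forced. The paper also adds a preliminary case split (all simples in $\Cad$ fixed by $\GCad$ versus at least one not fixed), which lets it impose extra constraints on the $m_i$ before running the algorithm and keeps the output manageable; you do not mention this, but it is an optimization rather than a logical necessity.

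There is, however, a genuine gap in your elimination toolkit. Two candidate arrays survive all of the tools you list. The array with $\FPdim(\C)=3^4\cdot 5^2\cdot 17^2\cdot 37^2$ yields $(\Cad)_{\Z_5}$ of dimension $3^4\cdot 17^2\cdot 37^2$, which is neither $p^aq^b$ nor of the almost-square-free shapes covered by~\cite{DN,ENO2,NP}, so solvability is unavailable and your step~(iii) does not apply; the paper discards it by a direct fusion-rule computation, expanding $X_1\otimes X_1^*$ for the smallest-dimensional simple $X_1$ in $(\Cad)_{\Z_5}$ and showing no integer solution exists. A second array with $\FPdim(\C)=3^4\cdot 5^2\cdot 11^2$ gives $(\Cad)_{\Z_5}$ solvable of dimension $3^4\cdot 11^2$ with exactly $11$ invertibles, which is consistent with everything in your list; again the paper needs an $X_1\otimes X_1^*$ argument (case-splitting on how many invertibles stabilize a dimension-$3$ simple) to reach a contradiction. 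You should add this fusion-decomposition technique to your plan; without it the elimination does not close.

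A minor point: your claim that in the surviving case the $24$ non-invertible simples of $\Cad$ are ``all fixed'' is unnecessary and not argued. Once $\FPdim(\C)=5^2\cdot 11^2$ is established, Proposition~\ref{proposition: semidirect product}(b) applies directly with no further input about the $\GC$-action.
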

\begin{proof}    
    By Proposition~\ref{proposition: rank 49}, the multiset of the ranks of the universal grading components of $\C$ is represented by [(1, 29), (4, 5)]. We perform casework based on whether there exists a simple object in $\Cad$ that is not fixed by the action of $\GCad$ on the simple objects of $\Cad$. 

    First, consider the case in which all non-invertible simple objects in $\Cad$ are fixed by that action. The Frobenius-Perron dimensions of all non-invertible simple objects in $\Cad$ must be divisible by $5$. Define $\di_i$ and $m_i=\frac{\FPdim(\C)}{\di_i^2}$ for each $i$, as in Section~\ref{section: adjoint algorithm}. We show that none of the values of $m_i$ can be divisible by $5$.

    Suppose for the sake of contradiction that $5\mid m_i$ for some $i$. As $5\mid\di_i$, we know that $\FPdim(\C)=m_i\di_i^2$ is a multiple of $125$, so $\FPdim(\Cad)=\frac{\FPdim(\C)}{5}$ is a multiple of $25$. But we also know that $\FPdim(\Cad)=5+2\di_1^2+\dots+2\di_{12}^2\equiv 5\pmod{25}$, as each $\di_i$ is divisible by $5$, a contradiction.

    Additionally, as $\FPdim(\C)$ is not divisible by $7$, we know that $m_1\neq 49$. Similarly to the case [(1, 13), (4, 5)] for rank $33$ in Proposition~\ref{prop: rank 33}, $m_1$ must be a perfect square that is at least $25$. We use Algorithm~\ref{alg: Cad dim generation} modified to consider only the cases where all values of $m_i$ are not divisible by $5$, $m_1\neq 49$, and $m_1>25$ to compute the potential Frobenius-Perron dimensions of non-invertible simple objects in $\Cad$ (note that we skip $m_1=25$ as $m_1$ cannot be divisible by $5$). This produces the cases listed in Table~\ref{table: rank 49 dim arrays all divisible by 5}, which exhibits these dimensions in columns $1$ through $12$, excluding duplicates due to duals.

    \begin{table}[h]
	\begin{center}
            \scalebox{0.8}{
		\begin{tabular}{ c | c | c | c | c | c | c | c | c | c | c | c | c | c }
			\# & $\FPdim(\C)$ & 1 & 2 & 3 & 4 & 5 & 6 & 7 & 8 & 9 & 10 & 11 & 12 \\
			\hline
			1 & $3^4\cdot 5^2\cdot 29^2$ & 145 & 145 & 145 & 145 & 145 & 145 & 145 & 145 & 45 & 5 & 5 & 5 \\
                2 & $3^8\cdot 5^2\cdot 11^2$ & 495 & 495 & 495 & 495 & 495 & 495 & 495 & 495 & 135 & 55 & 55 & 15 \\
                3 & $3^4\cdot 5^2\cdot 11^2$ & 55 & 55 & 55 & 55 & 55 & 55 & 55 & 55 & 15 & 5 & 5 & 5 \\
                4 & $3^4\cdot 5^2\cdot 7^2\cdot 37^2$ & 1295 & 1295 & 1295 & 1295 & 1295 & 1295 & 1295 & 1295 & 315 & 185 & 185 & 5 \\
                5 & $3^4\cdot 5^2\cdot 7^2\cdot 13^2$ & 455 & 455 & 455 & 455 & 455 & 455 & 455 & 455 & 105 & 65 & 65 & 35 \\
                6 & $3^4\cdot 5^2\cdot 17^2\cdot 37^2$ & 3145 & 3145 & 3145 & 3145 & 3145 & 3145 & 3145 & 3145 & 555 & 555 & 555 & 255 \\
                7 & $3^4\cdot 5^2\cdot 19^2$ & 95 & 95 & 95 & 95 & 95 & 95 & 95 & 95 & 15 & 15 & 15 & 15 \\
                8 & $3^4\cdot 5^2\cdot 11^2$ & 55 & 55 & 55 & 55 & 55 & 55 & 45 & 45 & 45 & 15 & 5 & 5 \\
                9 & $3^4\cdot 5^2\cdot 11^2$ & 55 & 55 & 55 & 55 & 45 & 45 & 45 & 45 & 45 & 45 & 15 & 5 \\
                10 & $3^4\cdot 5^2\cdot 11^2$ & 55 & 55 & 45 & 45 & 45 & 45 & 45 & 45 & 45 & 45 & 45 & 15 \\
                11 & $5^2\cdot 11^2$ & 5 & 5 & 5 & 5 & 5 & 5 & 5 & 5 & 5 & 5 & 5 & 5
		\end{tabular}}
		\caption{Potential dimension arrays of $\Cad$ for $\C$ an MTC of rank $49$ with $5$ invertible objects, $\rank(\Cad)=29$, and $\sizeGCad=5$, where all values of $m_i$ are not divisible by $5$, $m_1\neq 49$, and $m_1>25$. All but line 11 discarded.}
		\label{table: rank 49 dim arrays all divisible by 5}
	\end{center}
    \end{table}

    First, note that we do not consider cases $4$ and $5$, as their Frobenius-Perron dimensions are divisible by $7$.

    Now, consider cases $1$, $3$, and $8$, and consider the de-equivariantizations $(\Cad)_{\Z_5}$, which are modular (see Section~\ref{section: equivariantization}). In these cases, there are $31$, $31$, and $21$ invertible objects in $(\Cad)_{\Z_5}$, respectively (as each non-invertible object of Frobenius-Perron dimension $5$ creates $5$ invertible objects in $(\Cad)_{\Z_5}$). However, $\FPdim((\Cad)_{\Z_5})=\frac{\FPdim(\C)}{25}$ is not divisible by this count in any of the cases, a contradiction. Thus, these cases are discarded.

    Next, we discard cases $2$, $7$, and $10$. In each case, $\FPdim((\Cad)_{\Z_5})=\frac{\FPdim(\C)}{25}$ is of the form $p^aq^b$ for primes $p$ and $q$, so $(\Cad)_{\Z_5}$ is solvable~\cite[Theorem 1.6]{ENO2}. Thus, by~\cite[Proposition 4.5 (iv)]{ENO2}, $(\Cad)_{\Z_5}$ must contain a non-trivial invertible object. But this is a contradiction, as there are no simple objects of Frobenius-Perron dimension $5$ in $\Cad$ for each case, discarding these cases.

    We proceed to case $6$. Non-invertible simple objects in $(\mathcal C_{\mathrm{ad}})_{\mathbb Z_5}$ will have Frobenius-Perron dimensions 629, 111, 51, and possibly 3145. Moreover, there is $1$ invertible object in $(\mathcal C_{\mathrm{ad}})_{\mathbb Z_5}$. Let $X_1$ be one of the simple objects of Frobenius-Perron dimension $51$. Then consider the decomposition 
    \begin{align*}
    	X_1\otimes X_1^{*}=1\oplus \bigoplus\limits_{i=1}^k N_{X_1,X_1^*}^{X_i} (X_i\oplus X_{i^*}),
    \end{align*}
    where $X_1, \dots, X_k, X_1^*, \dots, X_k^*$ denote all non-invertible simple objects in $(\mathcal C_{\mathrm{ad}})_{\mathbb Z_5}$, and we are using that  $N_{X_1,X_1^*}^{X_i}=N_{X_1,X_1^*}^{X_i^*}$ for all $i=1,\dots, k$. Taking Frobenius-Perron dimension on both sides of the equation above, we get
    \begin{align*}
    	51^2=1+2\sum\limits_{i=1}^k N_{X_1,X_1^*}^{X_i} \FPdim(X_i).
    \end{align*}
    But then 
    \begin{align*}
    	2601=1+2n_1\cdot 629+2n_2\cdot 111+2n_3\cdot 51
    \end{align*}
    for some $n_1, n_2, n_3\geq 0$. As there are no solutions to this equation, we have a contradiction.

    The last case we discard is case $9$. Non-invertible simple objects in $(\mathcal C_{\mathrm{ad}})_{\mathbb Z_5}$ will have Frobenius-Perron dimensions 11, 9, 3, and possibly 45. Moreover, there are $11$ invertible objects in $(\mathcal C_{\mathrm{ad}})_{\mathbb Z_5}$. Let $X_1$ be one of the simple objects of Frobenius-Perron dimension $3$, and suppose that it is fixed by only $1$ of the invertible objects. Then consider the decomposition 
    \begin{align*}
    	X_1\otimes X_1^{*}=1\oplus \bigoplus\limits_{i=1}^k N_{X_1,X_1^*}^{X_i} (X_i+X_{i^*}),
    \end{align*}
    where $X_1, \dots, X_k, X_1^*, \dots, X_k^*$ denote all non-invertible simple objects in $(\mathcal C_{\mathrm{ad}})_{\mathbb Z_5}$, and we are using that  $N_{X_1,X_1^*}^{X_i}=N_{X_1,X_1^*}^{X_i^*}$ for all $i=1,\dots, k$. Taking Frobenius-Perron dimension on both sides of the equation above, we get
    \begin{align*}
    	9=1+2\sum\limits_{i=1}^k N_{X_1,X_1^*}^{X_i} \FPdim(X_i).
    \end{align*}
    Hence $N_{X_1,X_1^*}^{X_i}=0$ for all $X_i$ with  $\FPdim(X_i)=11$ or $9$. But then 
    \begin{align*}
    	9=1+2m\cdot 3
    \end{align*}
    for some $m\geq 0$, a contradiction. 
    
    Next, suppose that $X_1$ is fixed by all $11$ of the invertible objects. Consider the decomposition 
    \begin{align*}
    	X_1\otimes X_1^{*}=\bigoplus\limits_{g\in G[X]} g\oplus \bigoplus\limits_{i=1}^k N_{X_1,X_1^*}^{X_i} (X_i\oplus X_{i^*}),
    \end{align*}
    where $X_1, \dots, X_k, X_1^*, \dots, X_k^*$ denote all non-invertible simple objects in $(\mathcal C_{\mathrm{ad}})_{\mathbb Z_5}$. Taking Frobenius-Perron dimension on both sides of the equation above, we get
    \begin{align*}
    	9=11+2\sum\limits_{i=1}^k N_{X_1,X_1^*}^{X_i} \FPdim(X_i),
    \end{align*}
    which is clearly a contradiction.

    Finally, consider case $11$.  It follows from Proposition~\ref{proposition: semidirect product} that $\mathcal C \cong\Rep(D^\omega(\mathbb{Z}_{11}\rtimes\mathbb{Z}_5))$.
   
    Now, we move to the case in which there exists a non-invertible simple object in $\Cad$ that is not fixed by the action of $\GCad$ on the simple objects of $\Cad$. Define $\di_i$ and $m_i$ similarly to the previous case for each $i$. Then, we know that at least $5$ of the values of $\di_i$ are equal.
    
    Similarly to the case [(1, 13), (4, 5)] for rank $33$ in Proposition~\ref{prop: rank 33}, $m_1$ must be a perfect square that is at least $25$. We use Algorithm~\ref{alg: Cad dim generation} modified to consider only the cases where at least five consecutive values $\di_i, \di_{i+1}, \dots, \di_{i+4}$ are equal and $m_1$ is a perfect square that is at least $25$ to compute the potential Frobenius-Perron dimensions of non-invertible simple objects in $\Cad$. This produces the cases listed in Table~\ref{table: rank 49 dim arrays at least 5 equal}, which exhibits these dimensions in columns $1$ through $12$, excluding duplicates due to duals.

    \begin{table}[h]
	\begin{center}
		\scalebox{0.8}{
            \begin{tabular}{ c | c | c | c | c | c | c | c | c | c | c | c | c | c }
			\# & $\FPdim(\C)$ & 1 & 2 & 3 & 4 & 5 & 6 & 7 & 8 & 9 & 10 & 11 & 12 \\
			\hline
			1 & $3^4\cdot 5^2\cdot 29^2$ & 145 & 145 & 145 & 145 & 145 & 145 & 145 & 145 & 45 & 5 & 5 & 5 \\
                2 & $3^8\cdot 5^2\cdot 11^2$ & 495 & 495 & 495 & 495 & 495 & 495 & 495 & 495 & 135 & 55 & 55 & 15 \\
                3 & $3^4\cdot 5^2\cdot 11^2$ & 55 & 55 & 55 & 55 & 55 & 55 & 55 & 55 & 15 & 5 & 5 & 5 \\
                4 & $3^4\cdot 5^2\cdot 7^2\cdot 37^2$ & 1295 & 1295 & 1295 & 1295 & 1295 & 1295 & 1295 & 1295 & 315 & 185 & 185 & 5 \\
                5 & $3^4\cdot 5^2\cdot 7^2\cdot 13^2$ & 455 & 455 & 455 & 455 & 455 & 455 & 455 & 455 & 105 & 65 & 65 & 35 \\
                6 & $3^4\cdot 5^2\cdot 17^2\cdot 37^2$ & 3145 & 3145 & 3145 & 3145 & 3145 & 3145 & 3145 & 3145 & 555 & 555 & 555 & 255 \\
                7 & $3^4\cdot 5^2\cdot 19^2$ & 95 & 95 & 95 & 95 & 95 & 95 & 95 & 95 & 15 & 15 & 15 & 15 \\
                8 & $3^4\cdot 5^2\cdot 11^2$ & 55 & 55 & 55 & 55 & 55 & 55 & 45 & 45 & 45 & 15 & 5 & 5 \\
                9 & $5^2\cdot 11^2$ & 5 & 5 & 5 & 5 & 5 & 5 & 5 & 5 & 5 & 5 & 5 & 5 \\
                10 & $3^4\cdot 5^2\cdot 7^2\cdot 37^2$ & 1665 & 1295 & 1295 & 1295 & 1295 & 1295 & 1295 & 555 & 555 & 315 & 185 & 5 \\
                11 & $3^4\cdot 5^2\cdot 7^2\cdot 13^2$ & 585 & 455 & 455 & 455 & 455 & 455 & 455 & 195 & 195 & 105 & 65 & 35 \\
                12 & $3^6\cdot 5^2\cdot 7^2$ & 135 & 105 & 105 & 105 & 105 & 105 & 105 & 45 & 35 & 35 & 15 & 15 \\
                13 & $3^6\cdot 5^2\cdot 7^2$ & 135 & 105 & 105 & 105 & 105 & 105 & 105 & 35 & 35 & 35 & 35 & 5 \\
                14 & $3^4\cdot 5^2\cdot 7^2\cdot 11^2\cdot 17^2$ & 8415 & 6545 & 6545 & 6545 & 6545 & 6545 & 5355 & 5355 & 1785 & 1155 & 315 & 105 \\
                15 & $5^2\cdot 7^2\cdot 13^2$ & 65 & 65 & 35 & 35 & 35 & 35 & 35 & 35 & 35 & 35 & 35 & 35 \\
                16 & $3^4\cdot 5^2\cdot 11^2$ & 55 & 55 & 45 & 45 & 45 & 45 & 45 & 45 & 45 & 45 & 45 & 15 \\
                17 & $3^2\cdot 5^2\cdot 7^2$ & 15 & 15 & 15 & 15 & 5 & 5 & 5 & 5 & 5 & 5 & 5 & 5 \\
                18 & $3^4\cdot 5^2\cdot 11^2$ & 55 & 55 & 55 & 55 & 45 & 45 & 45 & 45 & 45 & 45 & 15 & 5 \\
                19 & $3^6\cdot 5^2\cdot 7^2$ & 135 & 135 & 135 & 105 & 105 & 45 & 45 & 45 & 45 & 45 & 35 & 35 \\
                20 & $5^2\cdot 7^2\cdot 11^2$ & 55 & 55 & 55 & 55 & 35 & 35 & 7 & 7 & 7 & 7 & 7 & 5 \\
                21 & $5^2\cdot 7^2\cdot 13^2$ & 65 & 65 & 65 & 65 & 35 & 35 & 35 & 5 & 5 & 5 & 5 & 5
                
		\end{tabular}}
		\caption{Potential dimension arrays of $\Cad$ for $\C$ an MTC  of rank $49$ with $5$ invertible objects, $\rank(\Cad)=29$, and $\sizeGCad=5$, where at least $5$ values of $\di_i$ are equal and $m_1\geq 25$ is a perfect square. All discarded.}
		\label{table: rank 49 dim arrays at least 5 equal}
	\end{center}
    \end{table}

    We disregard cases 4, 5, 10--15, 17, and 19--21, as in those cases $\FPdim(\C)$ is divisible by $7$.

    In cases 1--3, 6--9, 16, and 18, the Frobenius-Perron dimensions of all non-invertible objects in $\Cad$ are divisible by $5$. We show that all these objects must be fixed under the action of $\GCad$ on the simple objects of $\Cad$. In all these cases, $\FPdim((\Cad)_{\Z_5})=\frac{\FPdim(\C)}{25}$ is not divisible by $5$. If a simple object $X$ in $\Cad$ were not fixed by the action, then there would be a simple object in $(\Cad)_{\Z_5}$ of dimension $\FPdim(X)$, which is divisible by $5$ and is hence not a divisor of $\FPdim((\Cad)_{\Z_5})$, resulting in a contradiction~\cite[Theorem 2.11]{ENO2}. Thus, all simple objects in $\Cad$ are fixed by the action.

    Now, note that all of the cases we are considering now (1--3, 6--9, 16, and 18) already appeared in Table~\ref{table: rank 49 dim arrays all divisible by 5}, and we handled them when we assumed that all simple objects in $\Cad$ were fixed.

    Hence, as we discarded all cases other than the desired existing one, this completes the proof.
\end{proof}

We summarize the results from this section in the following theorem.

\begin{theorem}
    Let $\C$ be an odd-dimensional MTC such that $33\leq\rank(\C)\leq 49$.
    \begin{enumerate}[label=(\alph*)]
        \item If $\rank(\C)\in \{37,39,45,47\}$, then $\C$ is either pointed or perfect.
        \item If $\rank(\C)=33$, then $\C$ is pointed, perfect, or has $3$ invertible objects. 
         \item If $\rank(\C)=35$, then $\C$ is pointed, perfect, or  the modular subcategory of $\mathcal Z(\operatorname{Vec}_{H_3}^{\omega})$ with 9 invertible objects and 26 simple objects of dimension 3, where $H_3$ denotes the Heisenberg group of order $3^3$.
          \item If $\rank(\C)=41$, then $\C$ is pointed, perfect, or has 3 or 5 invertible objects. Moreover, if the latter case exists, then $\C$ should be equivalent to $\mathcal D^{\mathbb Z_5}$, where $\mathcal D$ is a categorification of the ring $R_{5,H}$ as defined in~\cite[Definition 1.3]{JL}, and $H$ is a finite abelian group of order $3^4$.
          \item If $\rank(\C)=43$, then $\C$ is pointed, perfect, or has $9$ invertible objects.
        \item If $\rank(\C)=49$, then $\C$ is pointed, perfect, or has $3$ or $5$ invertible objects. Additionally, if $\C$ has $5$ invertible objects and $7\nmid\FPdim(\C)$, then $\C\cong\Rep(D^\omega(\Z_{11}\rtimes\Z_5))$ with $\Z_{11}\rtimes\Z_5$ non-abelian and for some $3$-cocycle $\omega$.
    \end{enumerate}
\end{theorem}

\section{Odd-dimensional MTCs of rank 51--73}\label{sec: ranks 51-73}

We perform the same procedures throughout this section as we did in Section~\ref{sec: ranks 33-49}. Specifically, we begin by finding the possible values of $\sizeGC$ using Lemma~\ref{lemma: rank linear combination}, and then find all possible multisets that contain the ranks of the components of the universal grading. We use this information to discard possibilities. Once again, many of the cases can be directly discarded using previous lemmas, which are also listed in Appendix~\ref{section: case list}.

\begin{enumerate}[label=(\alph*)]
    \item Lemma~\ref{lemma: rank more than 1}\label{enumitem: 51-73 rank more than 1},
    \item Lemma~\ref{lemma: odd number of components of rank}\label{enumitem: 51-73 odd number of components of rank},
    \item Lemma~\ref{lemma: rank Cad 9} in conjunction with Lemma~\ref{lemma: odd number of components of rank},
    \item \cite[Proposition 5.6]{CP}, Proposition~\ref{lemma: fixed simple}, and Proposition~\ref{proposition: invertible implies equal rank}, all of which are in conjunction with Lemma~\ref{lemma: odd number of components of rank}.
\end{enumerate}

We can use options~\ref{enumitem: 51-73 rank more than 1} and~\ref{enumitem: 51-73 odd number of components of rank} whenever $\sizeGC$ is larger than 1 (i.e., whenever the multiset representing $\C$ has length greater than $1$), since then $\Cpt$ is non-trivial and $\Cadpt$ is also non-trivial by Lemma~\ref{thm:trivial}.

All cases that we discard using the above lemmas are listed in Appendix~\ref{section: case list}. In the rest of this section, we present manual methods to discard many of the remaining cases for each rank.

\begin{proposition}
    Let $\C$ be an odd-dimensional MTC of rank $51$. Then $\C$ must be pointed, perfect, or have $3$ or $9$ invertible objects. Additionally, if $\sizeGC=3$, then the ranks of the components of the universal grading of $\C$ must be $[(1, 33), (2, 9)]$, and if $\sizeGC=9$, then those ranks must be $[(1, 27), (8, 3)]$ or  $[(3, 11), (6, 3)]$. 
\end{proposition}
\begin{proof}
    We can discard all values of $\sizeGC$ except for $51$, $43$, $35$, $27$, $19$, $17$, $11$, $9$, $7$, $5$, $3$, and $1$ using Lemma~\ref{lemma: rank linear combination}. Then, we discard most cases directly using previously shown lemmas, which are displayed in Appendix~\ref{section: case list}. The non-pointed and non-perfect cases that remain are $[(1, 27), (8, 3)]$, $[(3, 11), (6, 3)]$, and $[(1, 33), (2, 9)]$.
   
\end{proof}

\begin{proposition}
    Let $\C$ be an odd-dimensional MTC of rank $53$. Then $\C$ must be pointed or perfect.
\end{proposition}
\begin{proof}
    We can discard all values of $\sizeGC$ except for $53$, $45$, $37$, $29$, $21$, $15$, $13$, $9$, $7$, $5$, $3$, and $1$ using Lemma~\ref{lemma: rank linear combination}. Then, we discard most cases directly using previously shown lemmas, which are displayed in Appendix~\ref{section: case list}. The following non-pointed and non-perfect cases remain:
    \begin{itemize}
        \item {[(1, 23), (2, 15)]}. 
    \end{itemize}

    \textbf{Case [(1, 23), (2, 15)]}: By Lemma~\ref{thm:trivial}, $\sizeGCad=3$. We use Algorithm~\ref{alg: Cad dim generation} to compute the potential Frobenius-Perron dimensions of non-invertible simple objects in $\mathcal C_{\mathrm{ad}}$. After removing the cases that violate Remark~\ref{remark: fixed fpdim divisible by p}, this produces the cases listed in Table~\ref{table: rank 53 with 3 invertibles}, which exhibits these dimensions in columns 1 through 10, excluding duplicates due to duals.
    \begin{table}[h]
        \begin{center}
            \begin{tabular}{ c | c | c | c | c | c | c | c | c | c | c | c }
                \# & $\FPdim(\C)$ & 1 & 2 & 3 & 4 & 5 & 6 & 7 & 8 & 9 & 10 \\
                \hline
                1 & $3^8 \cdot 5^2 \cdot 13$ & 405 & 405 & 135 & 81 & 45 & 15 & 15 & 5 & 5 & 5 \\
                2 & $3^4 \cdot 11^2 \cdot 13$ & 99 & 99 & 33 & 11 & 11 & 11 & 9 & 9 & 3 & 3 \\
                3 & $3^4 \cdot 13^3$ & 117 & 117 & 39 & 13 & 13 & 13 & 9 & 9 & 9 & 3 \\
                4 & $3^4 \cdot 5^2 \cdot 13$ & 45 & 45 & 15 & 5 & 5 & 5 & 3 & 3 & 3 & 3 \\
                5 & $3^2 \cdot 5^2 \cdot 13^3$ & 195 & 195 & 39 & 39 & 39 & 39 & 15 & 3 & 3 & 3 \\
                6 & $3^2 \cdot 5^4 \cdot 13 \cdot 17^2$ & 1275 & 1275 & 255 & 255 & 255 & 255 & 75 & 51 & 51 & 3 \\
                7 & $3^2 \cdot 5^4 \cdot 13$ & 75 & 75 & 15 & 15 & 15 & 15 & 3 & 3 & 3 & 3 \\
                8 & $3^4 \cdot 5^2 \cdot 13$ & 45 & 45 & 9 & 9 & 9 & 5 & 5 & 5 & 3 & 3 \\
                9 & $3^2 \cdot 7^2 \cdot 13$ & 21 & 21 & 3 & 3 & 3 & 3 & 3 & 3 & 3 & 3 \\
                10 & $3^3 \cdot 5^2 \cdot 7$ & 15 & 15 & 15 & 5 & 5 & 5 & 3 & 3 & 3 & 3 \\
                11 & $3^2 \cdot 5^2 \cdot 37$ & 15 & 15 & 15 & 15 & 15 & 15 & 3 & 3 & 3 & 3 \\
                12 & $3^2 \cdot 5^3$ & 5 & 5 & 5 & 5 & 5 & 5 & 3 & 3 & 3 & 3 \\
                13 & $3^2 \cdot 61$ & 3 & 3 & 3 & 3 & 3 & 3 & 3 & 3 & 3 & 3 \\
            \end{tabular}
           \caption{Potential dimension arrays of $\Cad$ for $\C$ an MTC of rank $53$ with $3$ invertible objects, $\rank(\Cad)=23$, and $\sizeGCad=3$. All discarded.}
            \label{table: rank 53 with 3 invertibles}
        \end{center}
    \end{table}

    We will discard all potential arrays on a case-by-case basis. In case $1$, we know that the Frobenius-Perron dimension of the modular category $(\Cad)_{\Z_3}$ is $3^6\cdot 5^2\cdot 13$, so $(\Cad)_{\Z_3}$ is solvable by~\cite[Corollary 5.4]{N2}. Additionally, there does not exist a simple object of dimension $3$ in $\Cad$, so $(\Cad)_{\Z_3}$ is perfect. But this contradicts~\cite[Proposition 4.5 (iv)]{ENO2}. In case $2$, the modular category $(\Cad)_{\Z_3}$ has rank $45$ and $13$ invertible objects, a contradiction by Proposition~\ref{prop: rank 45}. 
    
    Cases $3$ and $6$ are impossible as in each case, $(\Cad)_{\Z_3}$ has $7$ invertible objects, which does not divide its dimension.

    In case $4$, we know that there must be either $7$ or $25$ invertible objects in the modular category $(\Cad)_{\Z_3}$. Since $7$ does not divide its Frobenius-Perron dimension of $3^2\cdot 5^2\cdot 13$, there must be $25$ invertible objects. Additionally, in this case, $(\Cad)_{\Z_3}$ must have rank $45$. But this contradicts Proposition~\ref{prop: rank 45}, discarding this case.

    In case $5$, as the Frobenius-Perron dimension of the modular category $(\Cad)_{\Z_3}$ is $5^2\cdot 13^3$, so $(\Cad)_{\Z_3}$ must be solvable by~\cite[Theorem 1.6]{ENO2}. As a result, by~\cite[Proposition 4.5 (iv)]{ENO2}, $(\Cad)_{\Z_3}$ cannot be perfect, so it must have $19$ invertible objects. However, it has rank either $29$ or $45$, a contradiction by Theorem~\ref{thm: rank 27 to 31} and Proposition~\ref{prop: rank 45}.

    In cases $7$, $9$, and $12$, the Frobenius-Perron dimensions of the modular category $(\Cad)_{\Z_3}$ are $5^4\cdot 13$, $7^2\cdot 13$, and $5^3$, respectively. Thus, in all of these cases $(\Cad)_{\Z_3}$ is pointed, by~\cite[Corollary 4.13]{DN},~\cite[Proposition 4.11]{NR}, and a generalization of~\cite[Lemma 4.11]{DN}, respectively. This means that each of their ranks must be equal to their dimensions. Additionally, in each case, the rank of $(\Cad)_{\Z_3}$ must be at most $3\cdot\rank(\Cad)=69$, i.e., when all simple objects in $\Cad$ are fixed under the action of $\GCad$ on them (note that this exact value is not even possible, as all invertible objects and simple objects with dimension not divisible by $3$ cannot be fixed). But this maximum possible rank is less than the actual rank in each case, hence discarding these cases.

    In case $8$, the modular category $(\Cad)_{\Z_3}$ has rank either $29$ or $45$ and $13$ invertible objects, a contradiction by Theorem~\ref{thm: rank 27 to 31} and Proposition~\ref{prop: rank 45}.

    In case $10$, the modular category $(\Cad)_{\Z_3}$ has rank $13$, $29$, or $45$ (as the simple objects of dimension $5$ in $\Cad$ cannot be fixed under the action of $\GCad$ on $\GC$) and at least $7$ invertible objects. Additionally, $(\Cad)_{\Z_3}$ is not pointed. But this is a contradiction, as there does not exist a non-pointed modular category of rank $13$, $29$, or $45$ with at least $7$ invertible objects by~\cite[Theorem 6.3 (a)]{CP}, Theorem~\ref{thm: rank 27 to 31}, and Proposition~\ref{prop: rank 45}, respectively.

    In case $11$, we know that the de-equivariantization $(\Cad)_{\Z_3}$ has Frobenius-Perron dimension $5^2\cdot 37$, so it must be pointed by~\cite[Theorem 4.11]{NR}. But $\Cad$ has simple objects of dimension $15$, so $(\Cad)_{\Z_3}$ cannot be pointed, a contradiction.

    Finally, in case $13$, $\C$ is pointed by~\cite[Proposition 4.11]{NR}, a contradiction (as there are simple objects of Frobenius-Perron dimension $3$).

    As a result, we have discarded all cases in Table~\ref{table: rank 53 with 3 invertibles}.
\end{proof}

\begin{proposition}
    Let $\C$ be an odd-dimensional MTC of rank $55$. Then $\C$ must be pointed or perfect.
\end{proposition}
\begin{proof}
    We can discard all values of $\sizeGC$ except for $55$, $47$, $39$, $31$, $23$, $15$, $13$, $11$, $7$, $5$, $3$, and $1$ using Lemma~\ref{lemma: rank linear combination}. Then, we discard most cases directly using previously shown lemmas, which are displayed in Appendix~\ref{section: case list}. The following non-pointed and non-perfect cases remain:
    \begin{itemize}
        \item {[(2, 21), (1, 13)]}.
    \end{itemize}

    \textbf{Case [(2, 21), (1, 13)]}: we use Algorithm~\ref{alg: Cad dim generation} to compute the potential Frobenius-Perron dimensions of non-invertible simple objects in $\mathcal C_{\mathrm{ad}}$. After removing the cases that violate Remark~\ref{remark: fixed fpdim divisible by p}, this produces the cases listed in Table~\ref{table: rank 55}, which exhibits these dimensions in columns 1 through 5, excluding duplicates due to duals.

\begin{table}[h]
    \begin{center}
        \begin{tabular}{ c | c | c | c | c | c | c }
            \# & $\FPdim(\C)$ & 1 & 2 & 3 & 4 & 5 \\
            \hline
            1 & $3^2 \cdot 5^2 \cdot 7$ & 15 & 3 & 3 & 3 & 3 \\
            2 & $3^2 \cdot 31$ & 3 & 3 & 3 & 3 & 3 \\
        \end{tabular}
        \caption{Potential dimension arrays of $\Cad$, for $\C$ MTC of rank $55$ with $3$ invertible objects, $\rank(\Cad)=13$, and $\sizeGCad=3$. All discarded.}
        \label{table: rank 55}
    \end{center}
\end{table}    

We will discard all potential arrays on a case-by-case basis. In case 1, the modular category $(\Cad)_{\mathbb Z_3}$ obtained by de-equivariantization has Frobenius-Perron dimension $5^2\cdot 7$, hence it is pointed. This contradicts that there are simple objects of dimension 15 in $\mathcal C_{\mathrm{ad}}$. In case 2, $\mathcal C$ should be pointed, see~\cite[Proposition 4.11]{NR}, which contradicts that the simple objects in $\mathcal C_{\mathrm{ad}}$ have dimension 3. Thus all cases in Table~\ref{table: rank 55} are discarded.
\end{proof}

\begin{proposition}
    Let $\C$ be an odd-dimensional MTC of rank $57$. Then $\C$ must be pointed, perfect, or have $3$ or $5$ invertible objects. Additionally, if $\sizeGC=3$, then the ranks of the components of the universal grading of $\C$ must be $[(1, 51), (2, 3)]$, and if $\sizeGC=5$, then those ranks must be $[(1, 37), (4, 5)]$.
\end{proposition}
\begin{proof}
    We can discard all values of $\sizeGC$ except for $57$, $49$, $41$, $33$, $25$, $19$, $17$, $11$, $9$, $7$, $5$, $3$, and $1$ using Lemma~\ref{lemma: rank linear combination}. Then, we discard most cases directly using previously shown lemmas, which are displayed in Appendix~\ref{section: case list}. The following non-pointed and non-perfect cases remain:
    \begin{itemize}
        \item {[(3, 17), (6, 1)]},
        \item {[(1, 15), (6, 7)]},
        \item {[(2, 27), (1, 3)]},
        \item {[(1, 37), (4, 5)]},
        \item {[(1, 51), (2, 3)]}.
    \end{itemize}

    \textbf{Case [(3, 17), (6, 1)]}: by Lemma~\ref{thm:trivial}, $\sizeGCad=3$ or $\sizeGCad=9$. The latter can be discarded by~\cite[Proposition 5.6]{CP}. For the former, we use Algorithm~\ref{alg: Cad dim generation} to compute the potential Frobenius-Perron dimensions of non-invertible simple objects in $\mathcal C_{\mathrm{ad}}$. After removing the cases that violate Remark~\ref{remark: fixed fpdim divisible by p}, this produces the cases listed in Table~\ref{table: rank 57 with 9 invertibles}, which exhibits these dimensions in columns 1 through 7, excluding duplicates due to duals.
    
    \begin{table}[h]
        \begin{center}
            \begin{tabular}{ c | c | c | c | c | c | c | c | c }
                \# & $\FPdim(\C)$ & 1 & 2 & 3 & 4 & 5 & 6 & 7 \\
                \hline
                1 & $3^8 \cdot 5^2$ & 81 & 45 & 15 & 15 & 5 & 5 & 5 \\
                2 & $3^3 \cdot 5^2 \cdot 19$ & 15 & 15 & 15 & 3 & 3 & 3 & 3 \\
                3 & $3^4 \cdot 11^2$ & 11 & 11 & 11 & 9 & 9 & 3 & 3 \\
                4 & $3^4 \cdot 13^2$ & 13 & 13 & 13 & 9 & 9 & 9 & 3 \\
                5 & $3^4 \cdot 5^2$ & 5 & 5 & 5 & 3 & 3 & 3 & 3 \\
                6 & $3^3 \cdot 43$ & 3 & 3 & 3 & 3 & 3 & 3 & 3 \\
            \end{tabular}
             \caption{Potential dimension arrays of $\Cad$, for $\C$ an MTC  of rank $57$ with $9$ invertible objects, $\rank(\Cad)=17$, and $\sizeGCad=3$. All discarded.}
            \label{table: rank 57 with 9 invertibles}
        \end{center}
    \end{table}

    Recall that there are components of the universal grading with a unique simple object. Since $\sizeGC=9$, this implies that $\FPdim(\mathcal C)$ should be a perfect square. This discards rows 2 and 6. On the other hand, for the arrays in rows 3-5, we get that $\FPdim((\Cad)_{\mathbb Z_3})=3\cdot p^2$, for $p=11, 13$ or $5$, respectively,  and thus $(\Cad)_{\mathbb Z_3}$ should be pointed, see~\cite[Proposition 4.11]{NR}. This contradicts that there are simple objects of dimension 11, 13 and 5 in $\Cad,$ respectively. 

Lastly, to discard row 1, note that non-invertible simple objects in $(\Cad)_{\mathbb Z_3}$ have Frobenius-Perron dimensions 27, 15 or 5. Let $X_1$ be one of the simple objects of Frobenius-Perron dimension $5$. Then consider the decomposition 
    \begin{align*}
    	X_1\otimes X_1^{*}=1\oplus \bigoplus\limits_{i=1}^k N_{X_1,X_1^*}^{X_i} (X_i+X_{i^*}),
    \end{align*}
    where $X_1, \dots, X_k, X_1^*, \dots, X_k^*$ denote all non-invertible simple objects in $(\mathcal C_{\mathrm{ad}})_{\mathbb Z_3}$, and we are using that  $N_{X_1,X_1^*}^{X_i}=N_{X_1,X_1^*}^{X_i^*}$ for all $i=1,\dots, k$. Taking Frobenius-Perron dimension on both sides of the equation above, we get
    \begin{align*}
    	25=1+2\sum\limits_{i=1}^k N_{X_1,X_1^*}^{X_i} \FPdim(X_i).
    \end{align*}
    But then we must have
    \begin{align*}
    	25=1+2n_1\cdot 5,
    \end{align*}
    for some $n_1\geq 0$, which is not possible. Thus all cases in Table~\ref{table: rank 57 with 9 invertibles} are discarded.

    \textbf{Case [(1, 15), (6, 7)]}: we use Algorithm~\ref{alg: Cad dim generation} to compute the potential Frobenius-Perron dimensions of non-invertible simple objects in $\mathcal C_{\mathrm{ad}}$. After removing the cases that violate Remark~\ref{remark: fixed fpdim divisible by p}, this produces the unique case listed in Table~\ref{table: rank 57 with 7 invertibles}, which exhibits these dimensions in columns 1 through 4, excluding duplicates due to duals.
        \begin{table}[h]
        \begin{center}
            \begin{tabular}{ c | c | c | c | c | c }
                \# & $\FPdim(\C)$ & 1 & 2 & 3 & 4 \\
                \hline
                1 & $3 \cdot 7^2 \cdot 19$ & 7 & 7 & 7 & 7 \\
            \end{tabular}
             \caption{Potential dimension arrays of $\Cad$, for $\C$ an MTC of rank $57$ with $7$ invertible objects, $\rank(\Cad)=15$, and $\sizeGCad=7$. All discarded.}
            \label{table: rank 57 with 7 invertibles}
        \end{center}
    \end{table}
Since $\FPdim(C)=7^2\cdot d,$ for $d$ square free, then $\mathcal C$ should be pointed, see~\cite[Theorem 4.7, Corollary 4.13]{DN}. Hence this case is discarded.

    \textbf{Case [(2, 27), (1, 3)]}: since $\rank(\Cad)=3$ by Lemma~\ref{lemma: odd number of components of rank}, we know $\sizeGCad=3$. But this means that $\FPdim(\Cad)=3$, which must be equal to the Frobenius-Perron dimension of the remaining two components of the universal grading of $\C$. This is not possible since each of those components contains $27$ simple objects, which discards this case. 
\end{proof}

\begin{proposition}
    Let $\C$ be an odd-dimensional MTC of rank $59$. Then $\C$ must be pointed, perfect, or have $3$ or $9$ invertible objects. Additionally, if $\sizeGC=3$, then the ranks of the components of the universal grading of $\C$ must be $[(1, 41), (2, 9)]$, and if $\sizeGC=9$, then those ranks must be $[(1, 35), (8, 3)]$. 
\end{proposition}
\begin{proof}
    We can discard all values of $\sizeGC$ except for $59$, $51$, $43$, $35$, $27$, $19$, $17$, $11$, $9$, $7$, $5$, $3$, and $1$ using Lemma~\ref{lemma: rank linear combination}. Then, we discard most cases directly using previously shown lemmas, which are displayed in Appendix~\ref{section: case list}. The following non-pointed and non-perfect cases remain:
    \begin{itemize}
        \item {[(2, 19), (7, 3)]},
        \item {[(4, 11), (5, 3)]},
        \item {[(1, 35), (8, 3)]},
        \item {[(1, 41), (2, 9)]}.
    \end{itemize}

    \textbf{Case [(2, 19), (7, 3)]}: since $\rank(\Cad)=3$ by Lemma~\ref{lemma: odd number of components of rank}, we know $\sizeGCad=3$. But this means that $\FPdim(\Cad)=3$, which must be equal to the Frobenius-Perron dimension of the remaining components of the universal grading of $\C$. This is not possible for the components of rank $19$, which discards this case. 

    \textbf{Case [(4, 11), (5, 3)]}: similarly to the previous case, $\rank(\Cad)=3$ by Lemma~\ref{lemma: odd number of components of rank}, and we know $\sizeGCad=3$. This means that $\FPdim(\Cad)=3$, which must be equal to the Frobenius-Perron dimension of the remaining components of the universal grading of $\C$. This is not possible for the components of rank $11$, which discards this case. 
\end{proof}

\begin{proposition}
    Let $\C$ be an odd-dimensional MTC of rank $61$. Then $\C$ must be pointed, perfect, or have $3$ invertible objects. Additionally, if $\sizeGC=3$, then the ranks of the components of the universal grading of $\C$ must be $[(1, 31), (2, 15)]$. 
\end{proposition}
\begin{proof}
    We can discard all values of $\sizeGC$ except for $61$, $53$, $45$, $37$, $29$, $21$, $15$, $13$, $9$, $7$, $5$, $3$, and $1$ using Lemma~\ref{lemma: rank linear combination}. Then, we discard most cases directly using previously shown lemmas, which are displayed in Appendix~\ref{section: case list}. The following non-pointed and non-perfect cases remain:
    \begin{itemize}
        \item {[(1, 19), (14, 3)]},
        \item {[(2, 11), (13, 3)]},
        \item {[(2, 13), (7, 5)]},
        \item {[(1, 31), (2, 15)]}.
    \end{itemize}

    \textbf{Case [(1, 19), (14, 3)]}: in this case we know $\rank(\Cad)=19$ by Lemma~\ref{lemma: odd number of components of rank}, and, given $\sizeGC=15$, we also know that $\sizeGCad=3$, $\sizeGCad=5$, or $\sizeGCad=15$. 

    The cases $\sizeGCad=3$ and $\sizeGCad=5$ can be discarded by Proposition~\ref{proposition: invertible implies equal rank} since there is only one component of rank $19$ in the universal grading of $\C$, which is less than $\frac{\sizeGC}{\sizeGCad}$. 

    The case $\sizeGCad=15$ can be discarded by Lemma~\ref{lemma: rank more than 1}, as there is only one component of rank at least $5$ (which divides $\sizeGCad=15$).

    \textbf{Case [(2, 11), (13, 3)]}: since $\rank(\Cad)=3$ by Lemma~\ref{lemma: odd number of components of rank}, we know $\sizeGCad=3$. But this means that $\FPdim(\Cad)=3$, which must be equal to the Frobenius-Perron dimension of the remaining components of the universal grading of $\C$. This is not possible for the components of rank $11$, which discards this case.

    \textbf{Case [(2, 13), (7, 5)]}: by Lemma~\ref{lemma: odd number of components of rank} we know that $\rank(\Cad)=5$. Since $\sizeGC=9$, we also know that $\sizeGCad=3$. 

    Consider the de-equivariantization $\deeqZ{3}$ of $\Cad$ by $\GCad\cong \Z_3$. The three invertible objects of $\Cad$ create one simple object in $\deeqZ{3}$. The two non-invertible simple objects in $\Cad$ must be fixed under the action of $\GCad$ on the simple objects of $\Cad$, and each of them creates three simple objects in $\deeqZ{3}$. As a result, $\rank(\deeqZ{3})=1+3\cdot 2=7$, and by~\cite[Theorem 4.5]{BR}, $\deeqZ{3}$ is pointed. This means that $\FPdim(\deeqZ{3})=7$ and $\FPdim(\Cad)=3\cdot 7=21$. This must be equal to the Frobenius-Perron dimension of any component of the universal grading of $\C$. By Proposition~\ref{proposition: invertible implies equal rank}, we know that the components with rank $13$ do not contain any invertible objects, and as such their Frobenius-Perron dimension must be at least $13\cdot 3^2=117$, which is larger than $21$, discarding this case. 
\end{proof}

\begin{proposition}
    Let $\C$ be an odd-dimensional MTC of rank $63$. Then $\C$ must be pointed or perfect.
\end{proposition}
\begin{proof}
    We can discard all values of $\sizeGC$ except for $63$, $55$, $47$, $39$, $31$, $23$, $21$, $15$, $13$, $11$, $9$, $7$, $5$, $3$, and $1$ using Lemma~\ref{lemma: rank linear combination}. Then, we discard most cases directly using previously shown lemmas, which are displayed in Appendix~\ref{section: case list}. The following non-pointed and non-perfect cases remain:
    \begin{itemize}
        \item {[(21, 3)]},
        \item {[(3, 17), (12, 1)]},
        \item {[(9, 7)]},
        \item {[(3, 21)]}.
    \end{itemize}

    \textbf{Case [(21, 3)]}: in this case we must have $\sizeGCad=3$ and $\FPdim(\Cad)=3$. This must be equal to the Frobenius-Perron dimension of the remaining components of the universal grading of $\C$, and since each of them has rank $3$, they must each contain $3$ invertible objects. This implies that $\C$ is pointed, a contradiction, which discards this case. 

    \textbf{Case [(3, 17), (12, 1)]}: by Lemma~\ref{thm:trivial}, $\sizeGCad=3, 5$ or 15. Note that $\sizeGCad=15$ can be discarded by Proposition~\ref{lemma: fixed simple}, and $\sizeGCad=3$ can be discarded by Proposition~\ref{proposition: invertible implies equal rank}.
    For $\sizeGCad=5$, we use Algorithm~\ref{alg: Cad dim generation} to compute the potential Frobenius-Perron dimensions of non-invertible simple objects in $\mathcal C_{\mathrm{ad}}$. After removing the cases that violate Remark~\ref{remark: fixed fpdim divisible by p}, this produces the cases listed in Table~\ref{table: rank 63 with 15 invertibles}, which exhibits these dimensions in columns 1 through 6, excluding duplicates due to duals.
    \begin{table}[h]
        \begin{center}
            \begin{tabular}{ c | c | c | c | c | c | c | c }
                \# & $\FPdim(\C)$ & 1 & 2 & 3 & 4 & 5 & 6 \\
                \hline
                1 & $3^2 \cdot 5^2 \cdot 47$ & 15 & 5 & 5 & 5 & 5 & 5 \\
                2 & $3^6 \cdot 5^2 \cdot 7 \cdot 11^2$ & 495 & 495 & 135 & 55 & 55 & 15 \\
                3 & $3^2 \cdot 5^2 \cdot 7 \cdot 11^2$ & 55 & 55 & 15 & 5 & 5 & 5 \\
                4 & $3^2 \cdot 5^2 \cdot 7^3 \cdot 13^2$ & 455 & 455 & 105 & 65 & 65 & 35 \\
                5 & $3^2 \cdot 5^2 \cdot 7 \cdot 17^2 \cdot 37^2$ & 3145 & 3145 & 555 & 555 & 555 & 255 \\
                6 & $3^2 \cdot 5^2 \cdot 7 \cdot 19^2$ & 95 & 95 & 15 & 15 & 15 & 15 \\
                7 & $3^2 \cdot 5^2 \cdot 127$ & 15 & 15 & 15 & 15 & 5 & 5 \\
                8 & $3 \cdot 5^2 \cdot 61$ & 5 & 5 & 5 & 5 & 5 & 5 \\
            \end{tabular}
              \caption{Potential dimension arrays of $\Cad$, for $\C$ an MTC  of rank $63$ with $15$ invertible objects, $\rank(\Cad)=17$, and $\sizeGCad=5$. All discarded.}
            \label{table: rank 63 with 15 invertibles}
        \end{center}
    \end{table}

    All the generated cases are easily discarded, since the existence of components of the universal grading with a unique simple object implies that $\FPdim(\Cad)=\FPdim(\C)/15$ should be a perfect square. 

    \textbf{Case [(9, 7)]}: we know that $\sizeGC=9$ and $\rank(\Cad)=7$.  

    Consider the de-equivariantization $\deeqZ{3}$ of $\Cad$ by $\GCad\cong \Z_3$. The three invertible objects of $\Cad$ create one simple object in $\deeqZ{3}$. The four non-invertible simple objects in $\Cad$ must be fixed under the action of $\GCad$ on the simple objects of $\Cad$, and each of them creates three simple objects in $\deeqZ{3}$. As a result, $\rank(\deeqZ{3})=1+3\cdot 4=13$, and by~\cite[Theorem 6.3 (a)]{CP}, $\deeqZ{3}$ is pointed. This means that $\FPdim(\deeqZ{3})=13$ and $\FPdim(\Cad)=3\cdot 13=39$. This must be equal to the Frobenius-Perron dimension of any component of the universal grading of $\C$. By Proposition~\ref{proposition: invertible implies equal rank}, we know that six of the components do not contain any invertible objects, and as such their Frobenius-Perron dimension must be at least $7\cdot 3^2=63$, which is larger than $39$, discarding this case. 

    \textbf{Case [(3, 21)]}: we use Algorithm~\ref{alg: Cad dim generation} to compute the potential Frobenius-Perron dimensions of non-invertible simple objects in $\mathcal C_{\mathrm{ad}}$. After removing the cases that violate Remark~\ref{remark: fixed fpdim divisible by p}, this produces the cases listed in Table~\ref{table: rank 63 with 3 invertibles}, which exhibits these dimensions in columns 1 through 9, excluding duplicates due to duals.
    \begin{table}[h]
        \begin{center}
            \begin{tabular}{ c | c | c | c | c | c | c | c | c | c | c }
                \# & $\FPdim(\C)$ & 1 & 2 & 3 & 4 & 5 & 6 & 7 & 8 & 9 \\
                \hline
                1 & $3^8 \cdot 5^2 \cdot 7$ & 405 & 135 & 81 & 45 & 15 & 15 & 5 & 5 & 5 \\
                2 & $3^4 \cdot 7 \cdot 11^2$ & 99 & 33 & 11 & 11 & 11 & 9 & 9 & 3 & 3 \\
                3 & $3^4 \cdot 7 \cdot 13^2$ & 117 & 39 & 13 & 13 & 13 & 9 & 9 & 9 & 3 \\
                4 & $3^4 \cdot 5^2 \cdot 7$ & 45 & 15 & 5 & 5 & 5 & 3 & 3 & 3 & 3 \\
                5 & $3^2 \cdot 5^2 \cdot 7 \cdot 13^2$ & 195 & 39 & 39 & 39 & 39 & 15 & 3 & 3 & 3 \\
                6 & $3^2 \cdot 5^4 \cdot 7 \cdot 17^2$ & 1275 & 255 & 255 & 255 & 255 & 75 & 51 & 51 & 3 \\
                7 & $3^2 \cdot 5^4 \cdot 7$ & 75 & 15 & 15 & 15 & 15 & 3 & 3 & 3 & 3 \\
                8 & $3^4 \cdot 5^2 \cdot 7$ & 45 & 9 & 9 & 9 & 5 & 5 & 5 & 3 & 3 \\
                9 & $3^2 \cdot 7^3$ & 21 & 3 & 3 & 3 & 3 & 3 & 3 & 3 & 3 \\
                10 & $3^3 \cdot 5^3$ & 15 & 15 & 5 & 5 & 5 & 3 & 3 & 3 & 3 \\
                11 & $3^2 \cdot 5^2 \cdot 31$ & 15 & 15 & 15 & 15 & 15 & 3 & 3 & 3 & 3 \\
                12 & $3^2 \cdot 5 \cdot 11$ & 3 & 3 & 3 & 3 & 3 & 3 & 3 & 3 & 3 \\
            \end{tabular}
           \caption{Potential dimension arrays of $\Cad$, for $\C$ an MTC of rank $63$ with $3$ invertible objects, $\rank(\Cad)=21$, and $\sizeGCad=3$. All discarded.}
            \label{table: rank 63 with 3 invertibles}
        \end{center}
    \end{table}    

In case 12, $\FPdim(\C)=3^2\cdot d,$ for $d$ square-free, and so it should be pointed, see~\cite[Theorem 4.7, Corollary 4.13]{DN}. In case 7, $\FPdim((\Cad)_{\mathbb Z_3})=5^4\cdot 7$, and thus by~\cite[Corollary 4.13]{DN} $(\Cad)_{\mathbb Z_3}$ should be pointed, which contradicts that there are simple objects in $\Cad$ of Frobenius-Perron dimension greater than $3$. Cases 9--11 are discarded in the same way. 

In cases 2--4 and 8,  $\rank((\Cad)_{\mathbb Z_3})$ is at most 39, $|\mathcal G((\Cad)_{\mathbb Z_3})|\geq 7$, and $|\mathcal G((\Cad)_{\mathbb Z_3})|\neq 9$. Thus by Theorem~\ref{thm: summary}, we should have that $(\Cad)_{\mathbb Z_3}$ is pointed, which contradicts that there are simple objects in $\Cad$ of Frobenius-Perron dimension $>3$.

In case 5, note that 3 does not divide $\FPdim((\Cad)_{\mathbb Z_3})$, so the simple objects of dimension 3 in $\Cad$ must be fixed by the action of $\mathbb Z_3$. But then $|\mathcal G((\Cad)_{\mathbb Z_3})|=19,$ which does not divide $\FPdim((\Cad)_{\mathbb Z_3})$. 

In case 6, $(\Cad)_{\mathbb Z_3}$ has 7 invertibles, and so $((\Cad)_{\mathbb Z_3})_{\mathrm{ad}}$  has Frobenius-Perron dimension $5^4\cdot 17^2$. Since 7 does not divide $5^4\cdot 17^2$, this implies that $(((\Cad)_{\mathbb Z_3})_{\mathrm{ad}})_{\mathrm{pt}}$ is trivial, which contradicts Lemma~\ref{thm:trivial}.

Lastly, in case 1, the simple objects in $(\Cad)_{\mathbb Z_3}$ have dimensions $135$, $45$, $27$, $15$, or $5$. Let $X_1$ be one of the simple objects of Frobenius-Perron dimension $5$. Then consider the decomposition 
    \begin{align*}
    	X_1\otimes X_1^{*}=1\oplus \bigoplus\limits_{i=1}^k N_{X_1,X_1^*}^{X_i} (X_i+X_{i^*}),
    \end{align*}
    where $X_1, \dots, X_k, X_1^*, \dots, X_k^*$ denote all non-invertible simple objects in $(\mathcal C_{\mathrm{ad}})_{\mathbb Z_3}$, and we are using that  $N_{X_1,X_1^*}^{X_i}=N_{X_1,X_1^*}^{X_i^*}$ for all $i=1,\dots, k$. Taking Frobenius-Perron dimension on both sides of the equation above, we get
    \begin{align*}
    	25=1+2\sum\limits_{i=1}^k N_{X_1,X_1^*}^{X_i} \FPdim(X_i).
    \end{align*}
    But then we must have
    \begin{align*}
    	25=1+2n_1\cdot 5,
    \end{align*}
for some $n_1\geq 0$, which is not possible. Thus all cases in Table~\ref{table: rank 63 with 3 invertibles} are discarded.
\end{proof}

\begin{proposition}\label{proposition: rank 65}
    Let $\C$ be an odd-dimensional MTC of rank $65$. Then $\C$ must be pointed, perfect, or have $3$ or $5$ invertible objects. Additionally, if $\sizeGC=3$, then the ranks of the components of the universal grading of $\C$ must be $[(1, 59), (2, 3)]$, and if $\sizeGC=5$, then those ranks must be $[(1, 45), (4, 5)]$. 
\end{proposition}
\begin{proof}
    We can discard all values of $\sizeGC$ except for $65$, $57$, $49$, $41$, $33$, $25$, $19$, $17$, $13$, $11$, $9$, $7$, $5$, $3$, and $1$ using Lemma~\ref{lemma: rank linear combination}. Then, we discard most cases directly using previously shown lemmas, which are displayed in Appendix~\ref{section: case list}. The following non-pointed and non-perfect cases remain:
    \begin{itemize}
       
        \item {[(1, 23), (6, 7)]},
        \item {[(2, 27), (1, 11)]},
        \item {[(1, 45), (4, 5)]},
        \item {[(1, 59), (2, 3)]},
    \end{itemize}

    \textbf{Case [(1, 23), (6, 7)]}: we use Algorithm~\ref{alg: Cad dim generation} to compute the potential Frobenius-Perron dimensions of non-invertible simple objects in $\mathcal C_{\mathrm{ad}}$. After removing the cases that violate Remark~\ref{remark: fixed fpdim divisible by p}, this produces the cases listed in Table~\ref{table: rank 65 with 7 invertibles}, which exhibits these dimensions in columns 1 through 8, excluding duplicates due to duals.
    \begin{table}[h]
        \begin{center}
            \begin{tabular}{ c | c | c | c | c | c | c | c | c | c }
                \# & $\FPdim(\C)$ & 1 & 2 & 3 & 4 & 5 & 6 & 7 & 8 \\
                \hline
                1 & $3^4 \cdot 5^4 \cdot 7^2$ & 315 & 175 & 175 & 105 & 63 & 35 & 21 & 7 \\
                2 & $3^6 \cdot 5^2 \cdot 7^2 \cdot 17^2$ & 3213 & 1785 & 1785 & 1071 & 595 & 357 & 315 & 105 \\
                3 & $3^2 \cdot 5^2 \cdot 7^2$ & 21 & 7 & 7 & 7 & 7 & 7 & 7 & 7 \\
                4 & $7^2 \cdot 113$ & 7 & 7 & 7 & 7 & 7 & 7 & 7 & 7 \\
            \end{tabular}
            \caption{Potential dimension arrays of $\Cad$, for $\C$ an MTC of rank $65$ with $7$ invertible objects, $\rank(\Cad)=23$, and $\sizeGCad=7$. All discarded.}
            \label{table: rank 65 with 7 invertibles}
        \end{center}
    \end{table}    

   Case 4 is discarded since $\C$ should be pointed by~\cite[Proposition 4.11]{NR}. In case 3, note that 7 does not divide $\FPdim((\Cad)_{\mathbb Z_7})$ and so all simple objects in $\Cad$ of Frobenius-Perron dimension 7 must be fixed by the action of $\mathbb Z_7$. Hence  $(\Cad)_{\mathbb Z_7}$ has rank 113 and 99 invertibles. This implies that at least one component of its universal grading will have a unique (invertible) simple, and thus all components would have
Frobenius-Perron dimension 1, which is a contradiction since $(\Cad)_{\mathbb Z_7}$ is not pointed. 

In case 1, $(\Cad)_{\mathbb Z_7}$ has rank 113 and 15 invertibles, so the ranks of the components of its universal grading should be  [(1, 15), (14, 7)]. By Corollary~\ref{corollary: three components rank Cad}, $|\mathcal G(((\Cad)_{\mathbb Z_7})_{\mathrm{ad}})|=15$. But then $\FPdim((\Cad)_{\mathbb Z_7})_{\mathrm{ad}}))=15,$ which contradicts that the remaining components have 7 simples, since the Frobenius-Perron dimension of a non-invertible simple is at least 3.

It remains to discard case 2. Simple objects in $(\Cad)_{\mathbb Z_7}$ have dimensions $15$, $45$, $51$, $85$, $153$, $255$, or $459$. Let $X_1$ be one of the simple objects of Frobenius-Perron dimension $15$. Then consider the decomposition 
    \begin{align*}
    	X_1\otimes X_1^{*}=1\oplus \bigoplus\limits_{i=1}^k N_{X_1,X_1^*}^{X_i} (X_i+X_{i^*}),
    \end{align*}
    where $X_1, \dots, X_k, X_1^*, \dots, X_k^*$ denote all non-invertible simple objects in $(\mathcal C_{\mathrm{ad}})_{\mathbb Z_7}$, and we are using that  $N_{X_1,X_1^*}^{X_i}=N_{X_1,X_1^*}^{X_i^*}$ for all $i=1,\dots, k$. Taking Frobenius-Perron dimension on both sides of the equation above, we get
    \begin{align*}
    	225=1+2\sum\limits_{i=1}^k N_{X_1,X_1^*}^{X_i} \FPdim(X_i).
    \end{align*}
    But then, noting that $153=3\cdot 51$, we must have
    \begin{align*}
    	225=1+2n_1\cdot 15+ 2n_2\cdot 45+ 2n_3\cdot 51+ 2n_4\cdot 85,
    \end{align*}
for some $n_1, \dots, n_4\geq 0$. Taking equivalence mod 5, we obtain that $n_3=2$. But then 
\begin{align*}
    	20=2n_1\cdot 15+ 2n_2\cdot 45 +2n_4\cdot 85,
    \end{align*}
which is not possible.
Thus all cases in Table~\ref{table: rank 65 with 7 invertibles} are discarded.

    \textbf{Case [(2, 27), (1, 11)]}: we use Algorithm~\ref{alg: Cad dim generation} to compute the potential Frobenius-Perron dimensions of non-invertible simple objects in $\mathcal C_{\mathrm{ad}}$. After removing the cases that violate Remark~\ref{remark: fixed fpdim divisible by p}, this produces the unique case listed in Table~\ref{table: rank 65 with 3 invertibles}, which exhibits these dimensions in columns 1 through 4, excluding duplicates due to duals. 
        \begin{table}[h]
        \begin{center}
            \begin{tabular}{ c | c | c | c | c | c }
                \# & $\FPdim(\C)$ & 1 & 2 & 3 & 4 \\
                \hline
                1 & $3^2 \cdot 5^2$ & 3 & 3 & 3 & 3 \\
            \end{tabular}
            \caption{Potential dimension arrays of $\Cad$, for $\C$ an MTC  of rank $65$ with $3$ invertible objects, $\rank(\Cad)=11$, and $\sizeGCad=3$. Discarded.}
            \label{table: rank 65 with 3 invertibles}
        \end{center}
    \end{table}

Suppose that $\mathcal C$ is a modular category with the dimension array of $\Cad$ of Table~\ref{table: rank 65 with 3 invertibles}. Then each component of the universal grading of $\C$ that has $27$ simple objects does not have any invertible objects, so their Frobenius-Perron dimensions are at least $3^2\cdot 27=243$. But $\FPdim(\C)=225$, a contradiction, so $\C$ cannot exist.
\end{proof}

\begin{remark}
    It is known that the category $\Rep(D^\omega(\Z_{13}\rtimes\Z_3))$ is an example of an odd-dimensional MTC of rank $65$ (see~\cite[Section 4.2]{DKP}) with $3$ invertible objects. 
\end{remark}

\begin{proposition}
    Let $\C$ be an odd-dimensional MTC of rank $67$. Then $\C$ must be pointed, perfect, or have $3$ or $9$ invertible objects. Additionally, if $\sizeGC=3$, then the ranks of the components of the universal grading of $\C$ must be $[(1, 49), (2, 9)]$, and if $\sizeGC=9$, then those ranks must be $[(1, 43), (8, 3)]$. 
\end{proposition}
\begin{proof}
    We can discard all values of $\sizeGC$ except for $67$, $59$, $51$, $43$, $35$, $27$, $19$, $17$, $11$, $9$, $7$, $5$, $3$, and $1$ using Lemma~\ref{lemma: rank linear combination}. Then, we discard most cases directly using previously shown lemmas, which are displayed in Appendix~\ref{section: case list}. The following non-pointed and non-perfect cases remain:
    \begin{itemize}
        
        \item {[(5, 11), (4, 3)]},
        \item {[(4, 15), (1, 7)]},
        \item {[(1, 43), (8, 3)]},
        \item {[(1, 49), (2, 9)]}.
    \end{itemize}

    \textbf{Case [(5, 11), (4, 3)]}: by Lemma~\ref{thm:trivial}, $\sizeGCad=3$ or 9. Note that $\sizeGCad=9$ can be discarded by~\cite[Proposition 5.6]{CP}. For $\sizeGCad=3$, we use Algorithm~\ref{alg: Cad dim generation} to compute the potential Frobenius-Perron dimensions of non-invertible simple objects in $\mathcal C_{\mathrm{ad}}$. After removing the cases that violate Remark~\ref{remark: fixed fpdim divisible by p}, this produces the cases listed in Table~\ref{table: rank 67 with 9 invertibles}, which exhibits these dimensions in columns 1 through 6, excluding duplicates due to duals.
    \begin{table}[h]
        \begin{center}
            \begin{tabular}{ c | c | c | c | c | c }
                \# & $\FPdim(\C)$ & 1 & 2 & 3 & 4 \\
                \hline
                1 & $3^3 \cdot 5^2$ & 3 & 3 & 3 & 3 \\
            \end{tabular}
           \caption{Potential dimension arrays of $\Cad$, for $\C$ an MTC of rank $67$ with $9$ invertible objects, $\rank(\Cad)=11$, and $\sizeGCad=3$. Discarded.}
            \label{table: rank 67 with 9 invertibles}
        \end{center}
    \end{table}    
    To discard this case, note that the Frobenius-Perron dimension of each component of the universal grading is $3\cdot 5^2=75$. By Proposition~\ref{proposition: invertible implies equal rank},  two of the components of rank 11 do not have any invertibles, and so their Frobenius-Perron dimension is at least $11 \cdot 3^2 = 99$, a contradiction.

    \textbf{Case [(4, 15), (1, 7)]}: we know that $\rank(\Cad)=7$ by Lemma~\ref{lemma: odd number of components of rank} and $\sizeGCad=5$. Then the de-equivariantization of $\Cad$ by $\GCad\cong\Z_5$ must have rank $1+2\cdot 5=11$, so it is pointed by~\cite[Theorem 4.5]{BR} and has Frobenius-Perron dimension $11$. As a result, the dimensions of all components of the universal grading of $\C$ must be $\FPdim(\Cad)=55$. But the dimension of one of the components of rank $15$ must be at least $15\cdot 3^2>55$, a contradiction.
\end{proof}

\begin{proposition}
    Let $\C$ be an odd-dimensional MTC of rank $69$. Then $\C$ must be pointed, perfect, or have $3$ invertible objects. Additionally, if $\sizeGC=3$, then the ranks of the components of the universal grading of $\C$ must be $[(1, 39), (2, 15)]$. 
\end{proposition}
\begin{proof}
    We can discard all values of $\sizeGC$ except for $69$, $61$, $53$, $45$, $37$, $29$, $23$, $21$, $15$, $13$, $9$, $7$, $5$, $3$, and $1$ using Lemma~\ref{lemma: rank linear combination}. Then, we discard most cases directly using previously shown lemmas, which are displayed in Appendix~\ref{section: case list}. The following non-pointed and non-perfect cases remain:
    \begin{itemize}
        \item {[(3, 17), (18, 1)]},
        \item {[(1, 27), (14, 3)]},
        \item {[(3, 11), (12, 3)]},
        \item {[(3, 13), (6, 5)]},
        \item {[(1, 39), (2, 15)]}.
    \end{itemize}

    \textbf{Case [(3, 17), (18, 1)]}: by Lemma~\ref{thm:trivial}, $\sizeGCad=3$ or 7. Note that $\sizeGCad=3$ can be discarded by Proposition~\ref{proposition: invertible implies equal rank}. 
    For $\sizeGCad=7$, we use Algorithm~\ref{alg: Cad dim generation} to compute the potential Frobenius-Perron dimensions of non-invertible simple objects in $\mathcal C_{\mathrm{ad}}$. After removing the cases that violate Remark~\ref{remark: fixed fpdim divisible by p}, this produces the cases listed in Table~\ref{table: rank 69 with 21 invertibles}, which exhibits these dimensions in columns 1 through 5, excluding duplicates due to duals.
    \begin{table}[h]
        \begin{center}
            \begin{tabular}{ c | c | c | c | c | c | c }
                \# & $\FPdim(\C)$ & 1 & 2 & 3 & 4 & 5 \\
                \hline
                1 & $3^2 \cdot 5 \cdot 7^2 \cdot 13^2$ & 91 & 21 & 7 & 7 & 7 \\
                2 & $3^2 \cdot 5^3 \cdot 7^2 \cdot 17^2$ & 595 & 119 & 105 & 7 & 7 \\
                3 & $3^2 \cdot 7^2 \cdot 61$ & 21 & 7 & 7 & 7 & 7 \\
                4 & $3^2 \cdot 7^2 \cdot 173$ & 21 & 21 & 21 & 21 & 7 \\
                5 & $3 \cdot 7^2 \cdot 71$ & 7 & 7 & 7 & 7 & 7 \\
            \end{tabular}
             \caption{Potential dimension arrays of $\Cad$, for $\C$ an MTC  of rank $69$ with $21$ invertible objects, $\rank(\Cad)=17$, and $\sizeGCad=7$. All discarded.}
            \label{table: rank 69 with 21 invertibles}
        \end{center}
    \end{table}    
In this case, there are components of the universal grading with a unique simple object. This implies that $\FPdim(\Cad)=\FPdim(\C)/21$ should be a perfect square, which discards all potential arrays in Table~\ref{table: rank 69 with 21 invertibles}.

    \textbf{Case [(1, 27), (14, 3)]}: in this case we know $\rank(\Cad)=27$ by Lemma~\ref{lemma: odd number of components of rank}, and, given $\sizeGC=15$, we also know that $\sizeGCad=3$, $\sizeGCad=5$, or $\sizeGCad=15$. 

    The cases $\sizeGCad=3$ and $\sizeGCad=5$ can be discarded by Proposition~\ref{proposition: invertible implies equal rank} since there is only one component of rank $27$ in the universal grading of $\C$, which is less than $\frac{\sizeGC}{\sizeGCad}$. 

    The case $\sizeGCad=15$ can be discarded by Lemma~\ref{lemma: rank more than 1}, since there is only one component of rank at least $5$ (which is a divisor of $\sizeGCad=15$). 

    \textbf{Case [(3, 11), (12, 3)]}: in this case we know that $\rank(\Cad)=11$ by Lemma~\ref{lemma: odd number of components of rank}, and $\sizeGCad=3$ or $\sizeGCad=5$. 

    The case $\sizeGCad=3$ can be discarded by Proposition~\ref{proposition: invertible implies equal rank} since there are only three components of rank $11$ in the universal grading of $\C$, which is less than $\frac{\sizeGC}{\sizeGCad}=5$. 

    For the case $\sizeGCad=5$, we use Algorithm~\ref{alg: Cad dim generation} to compute the potential Frobenius-Perron dimensions of non-invertible simple objects in $\mathcal C_{\mathrm{ad}}$. After removing the cases that violate Remark~\ref{remark: fixed fpdim divisible by p}, this produces the cases listed in Table~\ref{table: rank 69 with 15 invertibles}, which exhibits these dimensions in columns 1 through 3, excluding duplicates due to duals.
    \begin{table}[h]
        \begin{center}
            \begin{tabular}{ c | c | c | c | c }
                \# & $\FPdim(\C)$ & 1 & 2 & 3 \\
                \hline
                1 & $3^2\cdot 5^2\cdot 37$ & 15 & 5 & 5 \\
                2 & $3\cdot 5^2\cdot 31$ & 5 & 5 & 5
            \end{tabular}
             \caption{Potential dimension arrays of $\Cad$, for $\C$ an MTC of rank $69$ with $15$ invertible objects, $\rank(\Cad)=11$, and $\sizeGCad=5$. All discarded.}
            \label{table: rank 69 with 15 invertibles}
        \end{center}
    \end{table}   

    In case $1$, the modular category $(\Cad)_{\mathbb Z_5}$ has rank $31$ but is neither pointed nor perfect, a contradiction by Theorem~\ref{thm: rank 27 to 31}. In case $2$, the Frobenius-Perron dimension of $\C$ is of the form $p^2d$ with $d$ squarefree, so $\C$ should be pointed by a generalization of~\cite[Corollary 4.13]{DN}, a contradiction.

    \textbf{Case [(3, 13), (6, 5)]}: by Lemma~\ref{thm:trivial}, $\sizeGCad=3$ or 9. Note that $\sizeGCad=9$ can be discarded by~\cite[Proposition 5.6]{CP}.
    For $\sizeGCad=3$, we use Algorithm~\ref{alg: Cad dim generation} to compute the potential Frobenius-Perron dimensions of non-invertible simple objects in $\mathcal C_{\mathrm{ad}}$. After removing the cases that violate Remark~\ref{remark: fixed fpdim divisible by p}, this produces the cases listed in Table~\ref{table: rank 69 with 9 invertibles}, which exhibits these dimensions in columns 1 through 5, excluding duplicates due to duals.
    \begin{table}[h]
        \begin{center}
            \begin{tabular}{ c | c | c | c | c | c | c }
                \# & $\FPdim(\C)$ & 1 & 2 & 3 & 4 & 5 \\
                \hline
                1 & $3^3 \cdot 5^2 \cdot 7$ & 15 & 3 & 3 & 3 & 3 \\
                2 & $3^3 \cdot 31$ & 3 & 3 & 3 & 3 & 3 \\
            \end{tabular}
             \caption{Potential dimension arrays of $\Cad$, for $\C$ an MTC  of rank $69$ with $9$ invertible objects, $\rank(\Cad)=13$, and $\sizeGCad=3$. All discarded.}
            \label{table: rank 69 with 9 invertibles}
        \end{center}
    \end{table}

    Case 2 is not possible since $\mathcal C$ should be pointed, see~\cite[Proposition 4.12]{NR}. In case 1, the de-equivariantization  $(\Cad)_{\mathbb Z_3}$ has Frobenius-Perron dimension $5^2\cdot 7$, so it should be pointed by~\cite[Proposition 4.11]{NR}. This contradicts the fact that there are simple objects of Frobenius-Perron dimension $15$ in $\Cad$. 
\end{proof}

\begin{proposition}
    Let $\C$ be an odd-dimensional MTC of rank $71$. Then $\C$ must be pointed, perfect, or have $3$ invertible objects. Additionally, if $\sizeGC=3$, then the ranks of the components of the universal grading of $\C$ must be $[(1, 29), (2, 21)]$. 
\end{proposition}
\begin{proof}
    We can discard all values of $\sizeGC$ except for $71$, $63$, $55$, $47$, $39$, $31$, $23$, $21$, $15$, $13$, $11$, $9$, $7$, $5$, $3$, and $1$ using Lemma~\ref{lemma: rank linear combination}. Then, we discard most cases directly using previously shown lemmas, which are displayed in Appendix~\ref{section: case list}. The following non-pointed and non-perfect case remains:
    \begin{itemize}
       
        \item {[(1, 29), (2, 21)]}.
    \end{itemize}

\end{proof}

\begin{proposition}
    Let $\C$ be an odd-dimensional MTC of rank $73$. Then $\C$ must be pointed, perfect, or have $3$, $5$, or $7$ invertible objects. Additionally, if $\sizeGC=3$, then the ranks of the components of the universal grading of $\C$ must be $[(1, 67), (2, 3)]$, if $\sizeGC=5$, then those ranks must be $[(1, 53), (4, 5)]$, and if $\sizeGC=7$, then those ranks must be $[(1, 31), (6, 7)]$. 
\end{proposition}
\begin{proof}
    We can discard all values of $\sizeGC$ except for $73$, $65$, $57$, $49$, $41$, $33$, $25$, $19$, $17$, $13$, $11$, $9$, $7$, $5$, $3$, and $1$ using Lemma~\ref{lemma: rank linear combination}. Then, we discard most cases directly using previously shown lemmas, which are displayed in Appendix~\ref{section: case list}. The following non-pointed and non-perfect cases remain:
    \begin{itemize}
        \item {[(3, 17), (2, 9), (4, 1)]},
        \item {[(2, 27), (1, 19)]},
        \item {[(1, 31), (6, 7)]},
        \item {[(1, 53), (4, 5)]},
        \item {[(1, 67), (2, 3)]}.
    \end{itemize}

    \textbf{Case [(3, 17), (2, 9), (4, 1)]}: by Lemma~\ref{thm:trivial}, $\sizeGCad=3$ or 9. Note that $\sizeGCad=9$ can be discarded by~\cite[Proposition 5.6]{CP}.
    For $\sizeGCad=3$, we use Algorithm~\ref{alg: Cad dim generation} to compute the potential Frobenius-Perron dimensions of non-invertible simple objects in $\mathcal C_{\mathrm{ad}}$. After removing the cases that violate Remark~\ref{remark: fixed fpdim divisible by p}, this produces the cases listed in Table~\ref{table: rank 73 with 9 invertibles}, which exhibits these dimensions in columns 1 through 7, excluding duplicates due to duals.
    \begin{table}[h]
        \begin{center}
            \begin{tabular}{ c | c | c | c | c | c | c | c | c }
                \# & $\FPdim(\C)$ & 1 & 2 & 3 & 4 & 5 & 6 & 7 \\
                \hline
                1 & $3^8 \cdot 5^2$ & 81 & 45 & 15 & 15 & 5 & 5 & 5 \\
                2 & $3^3 \cdot 5^2 \cdot 19$ & 15 & 15 & 15 & 3 & 3 & 3 & 3 \\
                3 & $3^4 \cdot 11^2$ & 11 & 11 & 11 & 9 & 9 & 3 & 3 \\
                4 & $3^4 \cdot 13^2$ & 13 & 13 & 13 & 9 & 9 & 9 & 3 \\
                5 & $3^4 \cdot 5^2$ & 5 & 5 & 5 & 3 & 3 & 3 & 3 \\
                6 & $3^3 \cdot 43$ & 3 & 3 & 3 & 3 & 3 & 3 & 3 \\
            \end{tabular}
            \caption{Potential dimension arrays of $\Cad$, for $\C$ an MTC  of rank $73$ with $9$ invertible objects, $\rank(\Cad)=17$, and $\sizeGCad=3$. All discarded.}
            \label{table: rank 73 with 9 invertibles}
        \end{center}
    \end{table}

    In cases 2--5, the modular category $(\Cad)_{\mathbb Z_3}$ obtained by de-equivariantization has Frobenius-Perron dimension $\FPdim(\C)/3^3=p^2\cdot q,$ for $p$ and $q$ odd primes. Hence by~\cite[Proposition 4.11]{NR} $(\Cad)_{\mathbb Z_3}$ should be pointed, which contradicts that $\Cad$ has simple objects of dimension greater than 3. Case 6 is discarded by~\cite[Proposition 4.12]{NR}, since $\C$ should be pointed. It remains to discard case 1. Note that $(\Cad)_{\mathbb Z_3}$ is solvable by~\cite[Theorem 1.6]{ENO2}. It follows from~\cite[Proposition 4.5]{ENO2} that $(\Cad)_{\mathbb Z_3}$ has a non-trivial invertible object. On the other hand, $\Cad$ has no simple objects of dimension 3, thus $(\Cad)_{\mathbb Z_3}$ is perfect, so we get a contradiction.

    \textbf{Case [(2, 27), (1, 19)]}: by Lemma~\ref{thm:trivial}, $\sizeGCad=3$. We use Algorithm~\ref{alg: Cad dim generation} to compute the potential Frobenius-Perron dimensions of non-invertible simple objects in $\mathcal C_{\mathrm{ad}}$. After removing the cases that violate Remark~\ref{remark: fixed fpdim divisible by p}, this produces the cases listed in Table~\ref{table: rank 73 with 3 invertibles}, which exhibits these dimensions in columns 1 through 8, excluding duplicates due to duals.
    \begin{table}[H]
        \begin{center}
            \begin{tabular}{ c | c | c | c | c | c | c | c | c | c }
                \# & $\FPdim(\C)$ & 1 & 2 & 3 & 4 & 5 & 6 & 7 & 8 \\
                \hline
                1 & $3^8 \cdot 5^2$ & 135 & 81 & 45 & 15 & 15 & 5 & 5 & 5 \\
                2 & $3^4 \cdot 11^2$ & 33 & 11 & 11 & 11 & 9 & 9 & 3 & 3 \\
                3 & $3^4 \cdot 13^2$ & 39 & 13 & 13 & 13 & 9 & 9 & 9 & 3 \\
                4 & $3^4 \cdot 5^2$ & 15 & 5 & 5 & 5 & 3 & 3 & 3 & 3 \\
                5 & $3^2 \cdot 5^2 \cdot 13^2$ & 39 & 39 & 39 & 39 & 15 & 3 & 3 & 3 \\
                6 & $3^2 \cdot 5^4 \cdot 17^2$ & 255 & 255 & 255 & 255 & 75 & 51 & 51 & 3 \\
                7 & $3^2 \cdot 5^4$ & 15 & 15 & 15 & 15 & 3 & 3 & 3 & 3 \\
                8 & $3^4 \cdot 5^2$ & 9 & 9 & 9 & 5 & 5 & 5 & 3 & 3 \\
                9 & $3^2 \cdot 7^2$ & 3 & 3 & 3 & 3 & 3 & 3 & 3 & 3 \\
            \end{tabular}
            \caption{Potential dimension arrays of $\Cad$, for $\C$ an MTC  of rank $73$ with $3$ invertible objects, $\rank(\Cad)=19$, and $\sizeGCad=3$. All discarded.}
            \label{table: rank 73 with 3 invertibles}
        \end{center}
    \end{table}

    For case 1, consider the modular category $(\Cad)_{\mathbb Z_3}$, which has non-invertible simple objects of dimensions $5$, $15$, $27$, and $45$, and let $X_1$ be a simple object of dimension $5$. Then, as $(\Cad)_{\mathbb Z_3}$ has one invertible object, consider the decomposition 
    \begin{align*}
    	X_1\otimes X_1^{*}=1\oplus \bigoplus\limits_{i=1}^k N_{X_1,X_1^*}^{X_i} (X_i+X_{i^*}),
    \end{align*}
    where $X_1, \dots, X_k, X_1^*, \dots, X_k^*$ denote all non-invertible simple objects in $(\mathcal C_{\mathrm{ad}})_{\mathbb Z_3}$, and we are using that  $N_{X_1,X_1^*}^{X_i}=N_{X_1,X_1^*}^{X_i^*}$ for all $i=1,\dots, k$. Taking Frobenius-Perron dimension on both sides of the equation above, we get
    \begin{align*}
    	25=1+2\sum\limits_{i=1}^k N_{X_1,X_1^*}^{X_i} \FPdim(X_i).
    \end{align*}
    But then we must have
    \begin{align*}
    	25=1+2n_1\cdot 5+ 2n_2\cdot 15+ 2n_3\cdot 27+ 2n_4\cdot 45,
    \end{align*}
    for some $n_1, \dots, n_4\geq 0$. Clearly, $n_3=n_4=0$. But then taking equivalence mod 5 gives a contradiction, discarding this case.

    Now, for each of cases $2$, $3$, $4$, $6$, $7$, and $8$, we must have that $(\Cad)_{\mathbb Z_3}$ has $7$, $13$, or $25$ invertible objects. But we also know that the rank of the modular category $(\Cad)_{\mathbb Z_3}$ must be $17$, $33$, or $49$, which are impossible by Theorem~\ref{thm: pointed 17 to 23}, Proposition~\ref{prop: rank 33}, and Proposition~\ref{proposition: rank 49}, respectively, discarding these cases.

    Next, consider case $5$. We know that the Frobenius-Perron dimension of the deq-equivariantization $(\Cad)_{\mathbb Z_3}$ is $5^2\cdot 13^2$, which is solvable by~\cite[Theorem 1.6]{ENO2} and hence not perfect by~\cite[Proposition 4.5]{ENO2}. As a result, the $3$-dimensional simple objects in $\Cad$ must be fixed by the action of $\GCad$ on $\Cad$, so $(\Cad)_{\mathbb Z_3}$ has $19$ invertible objects. But $19$ does not divide its dimension $5^2\cdot 13^2$, discarding this case.

    The final remaining case is case $9$. We know that the Frobenius-Perron dimension of each component of the universal grading of $\C$ must be $3\cdot 7^2=147$. But the components with $27$ simple objects cannot have any invertible objects, so their dimensions are each at least $27\cdot 3^2=243$, a contradiction.

    Hence, all cases in Table~\ref{table: rank 73 with 3 invertibles} are discarded.
\end{proof}

We summarize the results from this section in the following theorem. 
\begin{theorem}
    Let $\C$ be an odd-dimensional MTC such that $51\leq\rank(\C)\leq 73$.
    \begin{enumerate}[label=(\alph*)]
        \item If $\rank(\C)\in \{53, 55, 63\}$, then $\C$ is either pointed or perfect.
        \item If $\rank(\C)\in \{51, 59, 67\}$, then $\C$ is either pointed, perfect or has 3 or 9 invertible objects. 
        \item If $\rank(\C)\in \{57, 65\}$, then $\C$ is either pointed, perfect or has 3 or 5 invertible objects. 
        \item If $\rank(\C)\in \{61, 69, 71\}$, then $\C$ is either pointed, perfect or has 3 invertible objects. 
        \item If $\rank(\C)=73$, then $\C$ is either pointed, perfect or has 3, 5 or 7 invertible objects. 
    \end{enumerate}
\end{theorem}

\clearpage
\appendix
\begin{appendices}
\section{Code for basic and adjoint algorithms}\label{section: algorithm code}

The following is the code for Algorithms~\ref{alg: dim generation} and~\ref{alg: Cad dim generation}, written in the Julia programming language. It utilizes the built-in rational data type in order to avoid arithmetic precision errors. Algorithm~\ref{alg: dim generation} is implemented in the function \texttt{basic\_algorithm} and Algorithm~\ref{alg: Cad dim generation} is implemented in the function \texttt{adjoint\_algorithm}.

\begin{verbatim}
using Primes

function validate_result(dk_squared)
    global GC, s, k, w, u_squared, solutions
    FPdimC_over_w = dk_squared * u_squared[k]
    d_squared = Vector{Rational{Int128}}(undef, k)
    d = Vector{Int128}(undef, k)
    ds_are_integers = true
    power_of_prime = false
    for i in eachindex(u_squared)
        d_squared[i] = FPdimC_over_w // u_squared[i]
        d[i] = round(Int128, sqrt(d_squared[i]))
        if !isinteger(d_squared[i]) || d[i] * d[i] != d_squared[i]
            ds_are_integers = false
            break
        end
        if length(factor(d[i])) == 1
            power_of_prime = true
        end
    end
    # for perfect MTCs, exclude cases with a simple object
    # whose Frobenius-Perron dimension is a power of a prime
    if ds_are_integers && (GC > 1 || !power_of_prime)
        FPdimC = FPdimC_over_w * w
        solution = Vector{Int128}(undef, k + 2)
        solution[1] = FPdimC
        solution[2] = s
        for i in eachindex(d)
            solution[i + 2] = d[i]
        end
        push!(solutions, solution)
    end
end

function solve(i, last_c, last_u)
    global s, k, t, u_squared 
    last_u_squared = last_u * last_u
    max_new_u_squared = 
        s * last_u_squared // (t * last_c) + 
        (2 * (k - i) * last_u_squared) // last_c
    min_new_u = round(Int128, sqrt(2 * last_u_squared // last_c))
    # defensive coding
    if min_new_u >= 1 && 
        (min_new_u - 1) * (min_new_u - 1) > 
        (2 * last_u_squared // last_c)
        throw(ErrorException("min_new_u rounded incorrectly, 
            which can lead to missing solutions."))
    end
    new_u = max(min_new_u, last_u)
    if new_u % 2 == 0
        new_u += 1
    end
    while new_u * new_u <= max_new_u_squared
        new_u_squared = new_u * new_u
        new_c = last_c * new_u_squared // last_u_squared - 2
        u_squared[i + 1] = new_u_squared
        if k == i + 1
            if new_c > 0 && isinteger(s // new_c)
                dk_squared = s // new_c
                dk = round(Int128, sqrt(dk_squared))
                if dk * dk == dk_squared
                    validate_result(Int128(dk_squared))
                end
            end
        elseif new_c > 0
            solve(i + 1, new_c, new_u)
        end
        new_u += 2
    end
end

function basic_algorithm(arg_rank, arg_GC, arg_min_m1)
    n = Int128(arg_rank)
    global GC = Int128(arg_GC)
    global s = GC
    global k = (n - s) ÷ 2
    global t
    global w
    global u_squared = Vector{Int128}(undef, k)
    global solutions = Vector{Vector{Int128}}()
    min_m1 = Int128(arg_min_m1)
    if s == 1
        t = 225
    else
        t = 9
    end
    m1 = n % 8
    while m1 <= 2 * k + s ÷ 9
        w = 1
        if m1 >= s && m1 >= min_m1
            c1 = m1 - 2
            if (c1 > 0)
                u1_squared = m1
                factorization = factor(m1)
                for i in factorization
                    if i.second % 2 == 1
                        u1_squared ÷= i.first
                        w *= i.first
                    end
                end
                u_squared[1] = u1_squared
                u1 = floor(Int128, sqrt(u1_squared))
                i = 1
                solve(i, c1, u1)
            end
        end
        m1 += 8
    end
    return solutions
end

function adjoint_algorithm(arg_rank, arg_n, arg_GC, arg_s, arg_min_m1)
    rank = Int128(arg_rank)
    n = Int128(arg_n)
    global GC = Int128(arg_GC)
    global s = Int128(arg_s)
    global k = (n - s) ÷ 2
    global t
    global w
    global u_squared = Vector{Int128}(undef, k)
    global solutions = Vector{Vector{Int128}}()
    min_m1 = Int128(arg_min_m1)
    if GC == 1
        t = 225
    else
        t = 9
    end
    m1 = rank % 8
    while m1 <= 2 * k * GC + (s * GC) ÷ 9
        w = 1
        if m1 >= GC && m1 >= min_m1
            c1 = m1 // GC - 2
            if (c1 > 0)
                u1_squared = m1
                factorization = factor(m1)
                for i in factorization
                    if i.second % 2 == 1
                        u1_squared ÷= i.first
                        w *= i.first
                    end
                end
                u_squared[1] = u1_squared
                u1 = floor(Int128, sqrt(u1_squared))
                i = 1
                if k > 1
                    solve(i, c1, u1)
                elseif isinteger(s // c1)
                    dk_squared = s // c1
                    dk = round(Int128, sqrt(dk_squared))
                    if dk * dk == dk_squared
                        validate_result(Int128(dk_squared))
                    end
                end
            end
        end
        m1 += 8
    end
    return solutions
end
\end{verbatim}

\clearpage
\section{All non-pointed and non-perfect cases for ranks 33--73}\label{section: case list}

Let $\C$ be an odd-dimensional MTC. For each value of $\rank(\C)$ between $33$ and $73$, we present all possible multisets of ranks of the universal grading components of $\C$ that satisfy Lemma~\ref{lemma: rank linear combination}.

In each rank, we split the cases based on whether they are handled or not, and, if they are handled, the method we use to handle them. These methods are:
\begin{enumerate}[label=(\alph*)]
    \item Discarded by Lemma~\ref{lemma: rank more than 1},
    \item Discarded by Lemma~\ref{lemma: odd number of components of rank},
    \item Discarded by Lemma~\ref{lemma: rank Cad 9} in conjunction with Lemma~\ref{lemma: odd number of components of rank},
    \item Discarded by~\cite[Proposition 5.6]{CP}, Proposition~\ref{lemma: fixed simple}, and Proposition~\ref{proposition: invertible implies equal rank}, all of which are in conjunction with Lemma~\ref{lemma: odd number of components of rank},
    \item Specific methods listed in the respective proposition for each rank, found in Sections~\ref{sec: ranks 33-49} and~\ref{sec: ranks 51-73}.
\end{enumerate}

The cases that remain after using these methods are listed in the last row of each table, which is labelled ``Open.''

\begin{longtable}[h]{ p{0.06\textwidth} | >{\raggedright\arraybackslash}p{0.84\textwidth} }
Rule & Cases \\
\hline
(a) & 
$[(1, 9), (24, 1)]$, $[(1, 17), (16, 1)]$, $[(2, 9), (15, 1)]$, $[(1, 25), (8, 1)]$, $[(1, 17), (1, 9), (7, 1)]$\\
\hline
(b) & 
$[(1, 19), (1, 11), (1, 3)]$\\
\hline
(c) & 
$[(3, 9), (6, 1)]$\\
\hline
(d) & 
$[(11, 3)]$, $[(3, 11)]$\\
\hline
(e) & 
$[(1, 13), (4, 5)]$\\
\hline
Open & 
$[(1, 27), (2, 3)]$\\
\hline
\caption{Non-pointed and non-perfect cases for rank 33. Each case is denoted as an array of ordered pairs $(n, r)$, indicating that there are $n$ components of rank $r$ in the universal grading of $\C$.}
\label{table: rank 33 cases}
\end{longtable}
\begin{longtable}[h]{ p{0.06\textwidth} | >{\raggedright\arraybackslash}p{0.84\textwidth} }
Rule & Cases \\
\hline
(a) & 
$[(1, 9), (26, 1)]$, $[(1, 17), (18, 1)]$, $[(2, 9), (17, 1)]$, $[(1, 25), (10, 1)]$, $[(1, 17), (1, 9), (9, 1)]$, $[(1, 33), (2, 1)]$, $[(1, 25), (1, 9), (1, 1)]$, $[(2, 17), (1, 1)]$\\
\hline
(b) & 
None\\
\hline
(c) & 
$[(3, 9), (8, 1)]$\\
\hline
(d) & 
$[(7, 5)]$, $[(5, 7)]$\\
\hline
(e) & 
$[(1, 11), (8, 3)]$, $[(1, 17), (2, 9)]$\\
\hline
Open & 
None\\
\hline
\caption{Non-pointed and non-perfect cases for rank 35. Each case is denoted as an array of ordered pairs $(n, r)$, indicating that there are $n$ components of rank $r$ in the universal grading of $\C$.}
\label{table: rank 35 cases}
\end{longtable}
\begin{longtable}[h]{ p{0.06\textwidth} | >{\raggedright\arraybackslash}p{0.84\textwidth} }
Rule & Cases \\
\hline
(a) & 
$[(1, 9), (28, 1)]$, $[(1, 17), (20, 1)]$, $[(2, 9), (19, 1)]$, $[(1, 25), (12, 1)]$, $[(1, 17), (1, 9), (11, 1)]$, $[(1, 33), (4, 1)]$, $[(1, 25), (1, 9), (3, 1)]$, $[(2, 17), (3, 1)]$\\
\hline
(b) & 
$[(4, 9), (1, 1)]$\\
\hline
(c) & 
$[(3, 9), (10, 1)]$\\
\hline
(d) & 
$[(1, 19), (6, 3)]$, $[(2, 11), (5, 3)]$, $[(1, 17), (2, 9), (2, 1)]$, $[(1, 23), (2, 7)]$\\
\hline
(e) & 
$[(2, 15), (1, 7)]$\\
\hline
Open & 
None\\
\hline
\caption{Non-pointed and non-perfect cases for rank 37. Each case is denoted as an array of ordered pairs $(n, r)$, indicating that there are $n$ components of rank $r$ in the universal grading of $\C$.}
\label{table: rank 37 cases}
\end{longtable}
\begin{longtable}[h]{ p{0.06\textwidth} | >{\raggedright\arraybackslash}p{0.84\textwidth} }
Rule & Cases \\
\hline
(a) & 
$[(1, 9), (30, 1)]$, $[(1, 17), (22, 1)]$, $[(2, 9), (21, 1)]$, $[(1, 25), (14, 1)]$, $[(1, 17), (1, 9), (13, 1)]$, $[(1, 33), (6, 1)]$, $[(1, 25), (1, 9), (5, 1)]$, $[(2, 17), (5, 1)]$\\
\hline
(b) & 
$[(4, 9), (3, 1)]$, $[(1, 19), (1, 11), (3, 3)]$, $[(1, 21), (1, 13), (1, 5)]$\\
\hline
(c) & 
$[(3, 9), (12, 1)]$\\
\hline
(d) & 
$[(13, 3)]$, $[(1, 17), (2, 9), (4, 1)]$, $[(1, 27), (4, 3)]$, $[(3, 11), (2, 3)]$, $[(1, 29), (2, 5)]$, $[(3, 13)]$\\
\hline
(e) & 
None\\
\hline
Open & 
None\\
\hline
\caption{Non-pointed and non-perfect cases for rank 39. Each case is denoted as an array of ordered pairs $(n, r)$, indicating that there are $n$ components of rank $r$ in the universal grading of $\C$.}
\label{table: rank 39 cases}
\end{longtable}
\begin{longtable}[h]{ p{0.06\textwidth} | >{\raggedright\arraybackslash}p{0.84\textwidth} }
Rule & Cases \\
\hline
(a) & 
$[(1, 9), (32, 1)]$, $[(1, 17), (24, 1)]$, $[(2, 9), (23, 1)]$, $[(1, 25), (16, 1)]$, $[(1, 17), (1, 9), (15, 1)]$, $[(1, 33), (8, 1)]$, $[(1, 25), (1, 9), (7, 1)]$, $[(2, 17), (7, 1)]$\\
\hline
(b) & 
$[(4, 9), (5, 1)]$, $[(1, 27), (1, 11), (1, 3)]$\\
\hline
(c) & 
$[(3, 9), (14, 1)]$\\
\hline
(d) & 
$[(1, 11), (10, 3)]$, $[(1, 17), (2, 9), (6, 1)]$, $[(2, 13), (3, 5)]$, $[(2, 19), (1, 3)]$, $[(1, 19), (2, 11)]$\\
\hline
(e) & 
None\\
\hline
Open & 
$[(1, 21), (4, 5)]$, $[(1, 35), (2, 3)]$\\
\hline
\caption{Non-pointed and non-perfect cases for rank 41. Each case is denoted as an array of ordered pairs $(n, r)$, indicating that there are $n$ components of rank $r$ in the universal grading of $\C$.}
\label{table: rank 41 cases}
\end{longtable}
\begin{longtable}[h]{ p{0.06\textwidth} | >{\raggedright\arraybackslash}p{0.84\textwidth} }
Rule & Cases \\
\hline
(a) & 
$[(1, 9), (34, 1)]$, $[(1, 17), (26, 1)]$, $[(2, 9), (25, 1)]$, $[(1, 25), (18, 1)]$, $[(1, 17), (1, 9), (17, 1)]$, $[(1, 33), (10, 1)]$, $[(1, 25), (1, 9), (9, 1)]$, $[(2, 17), (9, 1)]$, $[(1, 41), (2, 1)]$, $[(1, 33), (1, 9), (1, 1)]$, $[(1, 25), (1, 17), (1, 1)]$\\
\hline
(b) & 
$[(4, 9), (7, 1)]$\\
\hline
(c) & 
$[(3, 9), (16, 1)]$\\
\hline
(d) & 
$[(1, 17), (2, 9), (8, 1)]$, $[(1, 13), (6, 5)]$, $[(1, 15), (4, 7)]$, $[(2, 17), (1, 9)]$\\
\hline
(e) & 
$[(2, 11), (7, 3)]$, $[(1, 25), (2, 9)]$\\
\hline
Open & 
$[(1, 19), (8, 3)]$\\
\hline
\caption{Non-pointed and non-perfect cases for rank 43. Each case is denoted as an array of ordered pairs $(n, r)$, indicating that there are $n$ components of rank $r$ in the universal grading of $\C$.}
\label{table: rank 43 cases}
\end{longtable}
\begin{longtable}[h]{ p{0.06\textwidth} | >{\raggedright\arraybackslash}p{0.84\textwidth} }
Rule & Cases \\
\hline
(a) & 
$[(1, 9), (36, 1)]$, $[(1, 17), (28, 1)]$, $[(2, 9), (27, 1)]$, $[(1, 25), (20, 1)]$, $[(1, 17), (1, 9), (19, 1)]$, $[(1, 33), (12, 1)]$, $[(1, 25), (1, 9), (11, 1)]$, $[(2, 17), (11, 1)]$, $[(1, 41), (4, 1)]$, $[(1, 33), (1, 9), (3, 1)]$, $[(1, 25), (1, 17), (3, 1)]$\\
\hline
(b) & 
$[(4, 9), (9, 1)]$, $[(1, 19), (1, 11), (5, 3)]$, $[(1, 17), (3, 9), (1, 1)]$, $[(1, 23), (1, 15), (1, 7)]$\\
\hline
(c) & 
$[(3, 9), (18, 1)]$, $[(2, 17), (1, 9), (2, 1)]$\\
\hline
(d) & 
$[(1, 17), (2, 9), (10, 1)]$, $[(1, 27), (6, 3)]$, $[(3, 11), (4, 3)]$, $[(1, 25), (2, 9), (2, 1)]$, $[(5, 9)]$, $[(1, 31), (2, 7)]$\\
\hline
(e) & 
$[(15, 3)]$, $[(9, 5)]$, $[(3, 15)]$\\
\hline
Open & 
None\\
\hline
\caption{Non-pointed and non-perfect cases for rank 45. Each case is denoted as an array of ordered pairs $(n, r)$, indicating that there are $n$ components of rank $r$ in the universal grading of $\C$.}
\label{table: rank 45 cases}
\end{longtable}
\begin{longtable}[h]{ p{0.06\textwidth} | >{\raggedright\arraybackslash}p{0.84\textwidth} }
Rule & Cases \\
\hline
(a) & 
$[(1, 9), (38, 1)]$, $[(1, 17), (30, 1)]$, $[(2, 9), (29, 1)]$, $[(1, 25), (22, 1)]$, $[(1, 17), (1, 9), (21, 1)]$, $[(1, 33), (14, 1)]$, $[(1, 25), (1, 9), (13, 1)]$, $[(2, 17), (13, 1)]$, $[(1, 41), (6, 1)]$, $[(1, 33), (1, 9), (5, 1)]$, $[(1, 25), (1, 17), (5, 1)]$\\
\hline
(b) & 
$[(4, 9), (11, 1)]$, $[(1, 17), (3, 9), (3, 1)]$, $[(1, 27), (1, 11), (3, 3)]$, $[(1, 29), (1, 13), (1, 5)]$\\
\hline
(c) & 
$[(3, 9), (20, 1)]$, $[(2, 17), (1, 9), (4, 1)]$, $[(5, 9), (2, 1)]$\\
\hline
(d) & 
$[(1, 17), (2, 9), (12, 1)]$, $[(1, 11), (12, 3)]$, $[(1, 25), (2, 9), (4, 1)]$, $[(1, 35), (4, 3)]$, $[(2, 19), (3, 3)]$, $[(1, 19), (2, 11), (2, 3)]$, $[(4, 11), (1, 3)]$, $[(1, 37), (2, 5)]$, $[(1, 21), (2, 13)]$\\
\hline
(e) & 
$[(2, 21), (1, 5)]$\\
\hline
Open & 
None\\
\hline
\caption{Non-pointed and non-perfect cases for rank 47. Each case is denoted as an array of ordered pairs $(n, r)$, indicating that there are $n$ components of rank $r$ in the universal grading of $\C$.}
\label{table: rank 47 cases}
\end{longtable}
\begin{longtable}[h]{ p{0.06\textwidth} | >{\raggedright\arraybackslash}p{0.84\textwidth} }
Rule & Cases \\
\hline
(a) & 
$[(1, 9), (40, 1)]$, $[(1, 17), (32, 1)]$, $[(2, 9), (31, 1)]$, $[(1, 25), (24, 1)]$, $[(1, 17), (1, 9), (23, 1)]$, $[(1, 33), (16, 1)]$, $[(1, 25), (1, 9), (15, 1)]$, $[(2, 17), (15, 1)]$, $[(1, 41), (8, 1)]$, $[(1, 33), (1, 9), (7, 1)]$, $[(1, 25), (1, 17), (7, 1)]$\\
\hline
(b) & 
$[(4, 9), (13, 1)]$, $[(1, 17), (3, 9), (5, 1)]$, $[(1, 21), (1, 13), (3, 5)]$, $[(1, 35), (1, 11), (1, 3)]$, $[(1, 27), (1, 19), (1, 3)]$\\
\hline
(c) & 
$[(3, 9), (22, 1)]$, $[(2, 17), (1, 9), (6, 1)]$, $[(5, 9), (4, 1)]$\\
\hline
(d) & 
$[(1, 17), (2, 9), (14, 1)]$, $[(1, 19), (10, 3)]$, $[(2, 11), (9, 3)]$, $[(1, 25), (2, 9), (6, 1)]$, $[(3, 13), (2, 5)]$, $[(1, 27), (2, 11)]$, $[(2, 19), (1, 11)]$\\
\hline
(e) & 
$[(7, 7)]$\\
\hline
Open & 
$[(1, 29), (4, 5)]$, $[(1, 43), (2, 3)]$\\
\hline
\caption{Non-pointed and non-perfect cases for rank 49. Each case is denoted as an array of ordered pairs $(n, r)$, indicating that there are $n$ components of rank $r$ in the universal grading of $\C$.}
\label{table: rank 49 cases}
\end{longtable}
\begin{longtable}[h]{ p{0.06\textwidth} | >{\raggedright\arraybackslash}p{0.84\textwidth} }
Rule & Cases \\
\hline
(a) & 
$[(1, 9), (42, 1)]$, $[(1, 17), (34, 1)]$, $[(2, 9), (33, 1)]$, $[(1, 25), (26, 1)]$, $[(1, 17), (1, 9), (25, 1)]$, $[(1, 33), (18, 1)]$, $[(1, 25), (1, 9), (17, 1)]$, $[(2, 17), (17, 1)]$, $[(1, 41), (10, 1)]$, $[(1, 33), (1, 9), (9, 1)]$, $[(1, 25), (1, 17), (9, 1)]$, $[(1, 49), (2, 1)]$, $[(1, 41), (1, 9), (1, 1)]$, $[(1, 33), (1, 17), (1, 1)]$, $[(2, 25), (1, 1)]$\\
\hline
(b) & 
$[(4, 9), (15, 1)]$, $[(1, 17), (3, 9), (7, 1)]$, $[(1, 19), (1, 11), (7, 3)]$, $[(1, 25), (1, 17), (1, 9)]$\\
\hline
(c) & 
$[(3, 9), (24, 1)]$, $[(2, 17), (1, 9), (8, 1)]$, $[(5, 9), (6, 1)]$\\
\hline
(d) & 
$[(1, 17), (2, 9), (16, 1)]$, $[(17, 3)]$, $[(1, 25), (2, 9), (8, 1)]$, $[(1, 21), (6, 5)]$, $[(2, 13), (5, 5)]$, $[(1, 23), (4, 7)]$, $[(2, 15), (3, 7)]$, $[(3, 17)]$\\
\hline
(e) & 
$[(1, 27), (8, 3)]$\\
\hline
Open & 
$[(3, 11), (6, 3)]$, $[(1, 33), (2, 9)]$\\
\hline
\caption{Non-pointed and non-perfect cases for rank 51. Each case is denoted as an array of ordered pairs $(n, r)$, indicating that there are $n$ components of rank $r$ in the universal grading of $\C$.}
\label{table: rank 51 cases}
\end{longtable}
\begin{longtable}[h]{ p{0.06\textwidth} | >{\raggedright\arraybackslash}p{0.84\textwidth} }
Rule & Cases \\
\hline
(a) & 
$[(1, 9), (44, 1)]$, $[(1, 17), (36, 1)]$, $[(2, 9), (35, 1)]$, $[(1, 25), (28, 1)]$, $[(1, 17), (1, 9), (27, 1)]$, $[(1, 33), (20, 1)]$, $[(1, 25), (1, 9), (19, 1)]$, $[(2, 17), (19, 1)]$, $[(1, 41), (12, 1)]$, $[(1, 33), (1, 9), (11, 1)]$, $[(1, 25), (1, 17), (11, 1)]$, $[(1, 49), (4, 1)]$, $[(1, 41), (1, 9), (3, 1)]$, $[(1, 33), (1, 17), (3, 1)]$, $[(2, 25), (3, 1)]$\\
\hline
(b) & 
$[(4, 9), (17, 1)]$, $[(1, 17), (3, 9), (9, 1)]$, $[(1, 27), (1, 11), (5, 3)]$, $[(1, 25), (1, 17), (1, 9), (2, 1)]$, $[(1, 25), (3, 9), (1, 1)]$, $[(2, 17), (2, 9), (1, 1)]$, $[(1, 31), (1, 15), (1, 7)]$\\
\hline
(c) & 
$[(3, 9), (26, 1)]$, $[(2, 17), (1, 9), (10, 1)]$, $[(5, 9), (8, 1)]$\\
\hline
(d) & 
$[(1, 17), (2, 9), (18, 1)]$, $[(1, 11), (14, 3)]$, $[(1, 25), (2, 9), (10, 1)]$, $[(1, 13), (8, 5)]$, $[(1, 35), (6, 3)]$, $[(2, 19), (5, 3)]$, $[(1, 19), (2, 11), (4, 3)]$, $[(4, 11), (3, 3)]$, $[(1, 33), (2, 9), (2, 1)]$, $[(3, 17), (2, 1)]$, $[(1, 17), (4, 9)]$, $[(1, 39), (2, 7)]$, $[(2, 23), (1, 7)]$\\
\hline
(e) & 
$[(1, 23), (2, 15)]$\\
\hline
Open & 
None\\
\hline
\caption{Non-pointed and non-perfect cases for rank 53. Each case is denoted as an array of ordered pairs $(n, r)$, indicating that there are $n$ components of rank $r$ in the universal grading of $\C$.}
\label{table: rank 53 cases}
\end{longtable}
\begin{longtable}[h]{ p{0.06\textwidth} | >{\raggedright\arraybackslash}p{0.84\textwidth} }
Rule & Cases \\
\hline
(a) & 
$[(1, 9), (46, 1)]$, $[(1, 17), (38, 1)]$, $[(2, 9), (37, 1)]$, $[(1, 25), (30, 1)]$, $[(1, 17), (1, 9), (29, 1)]$, $[(1, 33), (22, 1)]$, $[(1, 25), (1, 9), (21, 1)]$, $[(2, 17), (21, 1)]$, $[(1, 41), (14, 1)]$, $[(1, 33), (1, 9), (13, 1)]$, $[(1, 25), (1, 17), (13, 1)]$, $[(1, 49), (6, 1)]$, $[(1, 41), (1, 9), (5, 1)]$, $[(1, 33), (1, 17), (5, 1)]$, $[(2, 25), (5, 1)]$\\
\hline
(b) & 
$[(4, 9), (19, 1)]$, $[(1, 17), (3, 9), (11, 1)]$, $[(1, 25), (1, 17), (1, 9), (4, 1)]$, $[(1, 25), (3, 9), (3, 1)]$, $[(2, 17), (2, 9), (3, 1)]$, $[(6, 9), (1, 1)]$, $[(1, 35), (1, 11), (3, 3)]$, $[(1, 27), (1, 19), (3, 3)]$, $[(1, 19), (3, 11), (1, 3)]$, $[(1, 37), (1, 13), (1, 5)]$, $[(1, 29), (1, 21), (1, 5)]$\\
\hline
(c) & 
$[(3, 9), (28, 1)]$, $[(2, 17), (1, 9), (12, 1)]$, $[(5, 9), (10, 1)]$\\
\hline
(d) & 
$[(1, 17), (2, 9), (20, 1)]$, $[(1, 25), (2, 9), (12, 1)]$, $[(1, 19), (12, 3)]$, $[(2, 11), (11, 3)]$, $[(11, 5)]$, $[(1, 33), (2, 9), (4, 1)]$, $[(3, 17), (4, 1)]$, $[(1, 17), (4, 9), (2, 1)]$, $[(1, 43), (4, 3)]$, $[(1, 27), (2, 11), (2, 3)]$, $[(2, 19), (1, 11), (2, 3)]$, $[(5, 11)]$, $[(1, 45), (2, 5)]$, $[(1, 29), (2, 13)]$\\
\hline
(e) & 
$[(2, 21), (1, 13)]$\\
\hline
Open & 
None\\
\hline
\caption{Non-pointed and non-perfect cases for rank 55. Each case is denoted as an array of ordered pairs $(n, r)$, indicating that there are $n$ components of rank $r$ in the universal grading of $\C$.}
\label{table: rank 55 cases}
\end{longtable}
\begin{longtable}[h]{ p{0.06\textwidth} | >{\raggedright\arraybackslash}p{0.84\textwidth} }
Rule & Cases \\
\hline
(a) & 
$[(1, 9), (48, 1)]$, $[(1, 17), (40, 1)]$, $[(2, 9), (39, 1)]$, $[(1, 25), (32, 1)]$, $[(1, 17), (1, 9), (31, 1)]$, $[(1, 33), (24, 1)]$, $[(1, 25), (1, 9), (23, 1)]$, $[(2, 17), (23, 1)]$, $[(1, 41), (16, 1)]$, $[(1, 33), (1, 9), (15, 1)]$, $[(1, 25), (1, 17), (15, 1)]$, $[(1, 49), (8, 1)]$, $[(1, 41), (1, 9), (7, 1)]$, $[(1, 33), (1, 17), (7, 1)]$, $[(2, 25), (7, 1)]$\\
\hline
(b) & 
$[(4, 9), (21, 1)]$, $[(1, 17), (3, 9), (13, 1)]$, $[(1, 19), (1, 11), (9, 3)]$, $[(1, 25), (1, 17), (1, 9), (6, 1)]$, $[(1, 25), (3, 9), (5, 1)]$, $[(2, 17), (2, 9), (5, 1)]$, $[(6, 9), (3, 1)]$, $[(1, 29), (1, 13), (3, 5)]$, $[(1, 43), (1, 11), (1, 3)]$, $[(1, 35), (1, 19), (1, 3)]$, $[(1, 27), (1, 19), (1, 11)]$\\
\hline
(c) & 
$[(3, 9), (30, 1)]$, $[(2, 17), (1, 9), (14, 1)]$, $[(5, 9), (12, 1)]$\\
\hline
(d) & 
$[(1, 17), (2, 9), (22, 1)]$, $[(19, 3)]$, $[(1, 25), (2, 9), (14, 1)]$, $[(1, 27), (10, 3)]$, $[(3, 11), (8, 3)]$, $[(1, 33), (2, 9), (6, 1)]$, $[(1, 17), (4, 9), (4, 1)]$, $[(2, 21), (3, 5)]$, $[(1, 21), (2, 13), (2, 5)]$, $[(4, 13), (1, 5)]$, $[(1, 35), (2, 11)]$, $[(3, 19)]$\\
\hline
(e) & 
$[(3, 17), (6, 1)]$, $[(1, 15), (6, 7)]$, $[(2, 27), (1, 3)]$\\
\hline
Open & 
$[(1, 37), (4, 5)]$, $[(1, 51), (2, 3)]$\\
\hline
\caption{Non-pointed and non-perfect cases for rank 57. Each case is denoted as an array of ordered pairs $(n, r)$, indicating that there are $n$ components of rank $r$ in the universal grading of $\C$.}
\label{table: rank 57 cases}
\end{longtable}
\begin{longtable}[h]{ p{0.06\textwidth} | >{\raggedright\arraybackslash}p{0.84\textwidth} }
Rule & Cases \\
\hline
(a) & 
$[(1, 9), (50, 1)]$, $[(1, 17), (42, 1)]$, $[(2, 9), (41, 1)]$, $[(1, 25), (34, 1)]$, $[(1, 17), (1, 9), (33, 1)]$, $[(1, 33), (26, 1)]$, $[(1, 25), (1, 9), (25, 1)]$, $[(2, 17), (25, 1)]$, $[(1, 41), (18, 1)]$, $[(1, 33), (1, 9), (17, 1)]$, $[(1, 25), (1, 17), (17, 1)]$, $[(1, 49), (10, 1)]$, $[(1, 41), (1, 9), (9, 1)]$, $[(1, 33), (1, 17), (9, 1)]$, $[(2, 25), (9, 1)]$, $[(1, 57), (2, 1)]$, $[(1, 49), (1, 9), (1, 1)]$, $[(1, 41), (1, 17), (1, 1)]$, $[(1, 33), (1, 25), (1, 1)]$\\
\hline
(b) & 
$[(4, 9), (23, 1)]$, $[(1, 17), (3, 9), (15, 1)]$, $[(1, 25), (1, 17), (1, 9), (8, 1)]$, $[(1, 25), (3, 9), (7, 1)]$, $[(2, 17), (2, 9), (7, 1)]$, $[(6, 9), (5, 1)]$, $[(1, 27), (1, 11), (7, 3)]$, $[(1, 21), (1, 13), (5, 5)]$, $[(1, 23), (1, 15), (3, 7)]$, $[(1, 33), (1, 17), (1, 9)]$\\
\hline
(c) & 
$[(3, 9), (32, 1)]$, $[(2, 17), (1, 9), (16, 1)]$, $[(5, 9), (14, 1)]$\\
\hline
(d) & 
$[(1, 17), (2, 9), (24, 1)]$, $[(1, 25), (2, 9), (16, 1)]$, $[(1, 11), (16, 3)]$, $[(1, 33), (2, 9), (8, 1)]$, $[(3, 17), (8, 1)]$, $[(1, 17), (4, 9), (6, 1)]$, $[(1, 19), (2, 11), (6, 3)]$, $[(1, 29), (6, 5)]$, $[(3, 13), (4, 5)]$, $[(1, 31), (4, 7)]$, $[(3, 15), (2, 7)]$, $[(2, 25), (1, 9)]$, $[(1, 25), (2, 17)]$\\
\hline
(e) & 
$[(2, 19), (7, 3)]$, $[(4, 11), (5, 3)]$\\
\hline
Open & 
$[(1, 35), (8, 3)]$, $[(1, 41), (2, 9)]$\\
\hline
\caption{Non-pointed and non-perfect cases for rank 59. Each case is denoted as an array of ordered pairs $(n, r)$, indicating that there are $n$ components of rank $r$ in the universal grading of $\C$.}
\label{table: rank 59 cases}
\end{longtable}
\begin{longtable}[h]{ p{0.06\textwidth} | >{\raggedright\arraybackslash}p{0.84\textwidth} }
Rule & Cases \\
\hline
(a) & 
$[(1, 9), (52, 1)]$, $[(1, 17), (44, 1)]$, $[(2, 9), (43, 1)]$, $[(1, 25), (36, 1)]$, $[(1, 17), (1, 9), (35, 1)]$, $[(1, 33), (28, 1)]$, $[(1, 25), (1, 9), (27, 1)]$, $[(2, 17), (27, 1)]$, $[(1, 41), (20, 1)]$, $[(1, 33), (1, 9), (19, 1)]$, $[(1, 25), (1, 17), (19, 1)]$, $[(1, 49), (12, 1)]$, $[(1, 41), (1, 9), (11, 1)]$, $[(1, 33), (1, 17), (11, 1)]$, $[(2, 25), (11, 1)]$, $[(1, 57), (4, 1)]$, $[(1, 49), (1, 9), (3, 1)]$, $[(1, 41), (1, 17), (3, 1)]$, $[(1, 33), (1, 25), (3, 1)]$\\
\hline
(b) & 
$[(4, 9), (25, 1)]$, $[(1, 17), (3, 9), (17, 1)]$, $[(1, 25), (1, 17), (1, 9), (10, 1)]$, $[(1, 25), (3, 9), (9, 1)]$, $[(2, 17), (2, 9), (9, 1)]$, $[(6, 9), (7, 1)]$, $[(1, 35), (1, 11), (5, 3)]$, $[(1, 27), (1, 19), (5, 3)]$, $[(1, 19), (3, 11), (3, 3)]$, $[(1, 33), (1, 17), (1, 9), (2, 1)]$, $[(1, 33), (3, 9), (1, 1)]$, $[(1, 25), (1, 17), (2, 9), (1, 1)]$, $[(3, 17), (1, 9), (1, 1)]$, $[(1, 39), (1, 15), (1, 7)]$, $[(1, 31), (1, 23), (1, 7)]$\\
\hline
(c) & 
$[(3, 9), (34, 1)]$, $[(2, 17), (1, 9), (18, 1)]$, $[(5, 9), (16, 1)]$, $[(2, 25), (1, 9), (2, 1)]$\\
\hline
(d) & 
$[(1, 17), (2, 9), (26, 1)]$, $[(1, 25), (2, 9), (18, 1)]$, $[(1, 33), (2, 9), (10, 1)]$, $[(3, 17), (10, 1)]$, $[(1, 17), (4, 9), (8, 1)]$, $[(1, 21), (8, 5)]$, $[(1, 43), (6, 3)]$, $[(1, 27), (2, 11), (4, 3)]$, $[(2, 19), (1, 11), (4, 3)]$, $[(5, 11), (2, 3)]$, $[(1, 41), (2, 9), (2, 1)]$, $[(1, 25), (2, 17), (2, 1)]$, $[(1, 25), (4, 9)]$, $[(2, 17), (3, 9)]$, $[(1, 47), (2, 7)]$, $[(2, 23), (1, 15)]$\\
\hline
(e) & 
$[(1, 19), (14, 3)]$, $[(2, 11), (13, 3)]$, $[(2, 13), (7, 5)]$\\
\hline
Open & 
$[(1, 31), (2, 15)]$\\
\hline
\caption{Non-pointed and non-perfect cases for rank 61. Each case is denoted as an array of ordered pairs $(n, r)$, indicating that there are $n$ components of rank $r$ in the universal grading of $\C$.}
\label{table: rank 61 cases}
\end{longtable}
\begin{longtable}[h]{ p{0.06\textwidth} | >{\raggedright\arraybackslash}p{0.84\textwidth} }
Rule & Cases \\
\hline
(a) & 
$[(1, 9), (54, 1)]$, $[(1, 17), (46, 1)]$, $[(2, 9), (45, 1)]$, $[(1, 25), (38, 1)]$, $[(1, 17), (1, 9), (37, 1)]$, $[(1, 33), (30, 1)]$, $[(1, 25), (1, 9), (29, 1)]$, $[(2, 17), (29, 1)]$, $[(1, 41), (22, 1)]$, $[(1, 33), (1, 9), (21, 1)]$, $[(1, 25), (1, 17), (21, 1)]$, $[(1, 49), (14, 1)]$, $[(1, 41), (1, 9), (13, 1)]$, $[(1, 33), (1, 17), (13, 1)]$, $[(2, 25), (13, 1)]$, $[(1, 57), (6, 1)]$, $[(1, 49), (1, 9), (5, 1)]$, $[(1, 41), (1, 17), (5, 1)]$, $[(1, 33), (1, 25), (5, 1)]$\\
\hline
(b) & 
$[(4, 9), (27, 1)]$, $[(1, 17), (3, 9), (19, 1)]$, $[(1, 25), (1, 17), (1, 9), (12, 1)]$, $[(1, 25), (3, 9), (11, 1)]$, $[(2, 17), (2, 9), (11, 1)]$, $[(6, 9), (9, 1)]$, $[(1, 19), (1, 11), (11, 3)]$, $[(1, 33), (1, 17), (1, 9), (4, 1)]$, $[(1, 33), (3, 9), (3, 1)]$, $[(1, 25), (1, 17), (2, 9), (3, 1)]$, $[(3, 17), (1, 9), (3, 1)]$, $[(1, 17), (5, 9), (1, 1)]$, $[(1, 43), (1, 11), (3, 3)]$, $[(1, 35), (1, 19), (3, 3)]$, $[(1, 27), (1, 19), (1, 11), (2, 3)]$, $[(1, 27), (3, 11), (1, 3)]$, $[(1, 45), (1, 13), (1, 5)]$, $[(1, 37), (1, 21), (1, 5)]$, $[(1, 29), (1, 21), (1, 13)]$\\
\hline
(c) & 
$[(3, 9), (36, 1)]$, $[(2, 17), (1, 9), (20, 1)]$, $[(5, 9), (18, 1)]$, $[(2, 25), (1, 9), (4, 1)]$, $[(2, 17), (3, 9), (2, 1)]$\\
\hline
(d) & 
$[(1, 17), (2, 9), (28, 1)]$, $[(1, 25), (2, 9), (20, 1)]$, $[(1, 33), (2, 9), (12, 1)]$, $[(1, 17), (4, 9), (10, 1)]$, $[(1, 27), (12, 3)]$, $[(3, 11), (10, 3)]$, $[(1, 13), (10, 5)]$, $[(1, 41), (2, 9), (4, 1)]$, $[(1, 25), (2, 17), (4, 1)]$, $[(1, 25), (4, 9), (2, 1)]$, $[(7, 9)]$, $[(1, 51), (4, 3)]$, $[(2, 27), (3, 3)]$, $[(1, 35), (2, 11), (2, 3)]$, $[(3, 19), (2, 3)]$, $[(2, 19), (2, 11), (1, 3)]$, $[(1, 19), (4, 11)]$, $[(1, 53), (2, 5)]$, $[(2, 29), (1, 5)]$, $[(1, 37), (2, 13)]$\\
\hline
(e) & 
$[(21, 3)]$, $[(3, 17), (12, 1)]$, $[(9, 7)]$, $[(3, 21)]$\\
\hline
Open & 
None\\
\hline
\caption{Non-pointed and non-perfect cases for rank 63. Each case is denoted as an array of ordered pairs $(n, r)$, indicating that there are $n$ components of rank $r$ in the universal grading of $\C$.}
\label{table: rank 63 cases}
\end{longtable}
\begin{longtable}[h]{ p{0.06\textwidth} | >{\raggedright\arraybackslash}p{0.84\textwidth} }
Rule & Cases \\
\hline
(a) & 
$[(1, 9), (56, 1)]$, $[(1, 17), (48, 1)]$, $[(2, 9), (47, 1)]$, $[(1, 25), (40, 1)]$, $[(1, 17), (1, 9), (39, 1)]$, $[(1, 33), (32, 1)]$, $[(1, 25), (1, 9), (31, 1)]$, $[(2, 17), (31, 1)]$, $[(1, 41), (24, 1)]$, $[(1, 33), (1, 9), (23, 1)]$, $[(1, 25), (1, 17), (23, 1)]$, $[(1, 49), (16, 1)]$, $[(1, 41), (1, 9), (15, 1)]$, $[(1, 33), (1, 17), (15, 1)]$, $[(2, 25), (15, 1)]$, $[(1, 57), (8, 1)]$, $[(1, 49), (1, 9), (7, 1)]$, $[(1, 41), (1, 17), (7, 1)]$, $[(1, 33), (1, 25), (7, 1)]$\\
\hline
(b) & 
$[(4, 9), (29, 1)]$, $[(1, 17), (3, 9), (21, 1)]$, $[(1, 25), (1, 17), (1, 9), (14, 1)]$, $[(1, 25), (3, 9), (13, 1)]$, $[(2, 17), (2, 9), (13, 1)]$, $[(6, 9), (11, 1)]$, $[(1, 27), (1, 11), (9, 3)]$, $[(1, 33), (1, 17), (1, 9), (6, 1)]$, $[(1, 33), (3, 9), (5, 1)]$, $[(1, 25), (1, 17), (2, 9), (5, 1)]$, $[(3, 17), (1, 9), (5, 1)]$, $[(1, 17), (5, 9), (3, 1)]$, $[(1, 37), (1, 13), (3, 5)]$, $[(1, 29), (1, 21), (3, 5)]$, $[(1, 21), (3, 13), (1, 5)]$, $[(1, 51), (1, 11), (1, 3)]$, $[(1, 43), (1, 19), (1, 3)]$, $[(1, 35), (1, 27), (1, 3)]$, $[(1, 35), (1, 19), (1, 11)]$\\
\hline
(c) & 
$[(3, 9), (38, 1)]$, $[(2, 17), (1, 9), (22, 1)]$, $[(5, 9), (20, 1)]$, $[(2, 25), (1, 9), (6, 1)]$, $[(2, 17), (3, 9), (4, 1)]$, $[(7, 9), (2, 1)]$\\
\hline
(d) & 
$[(1, 17), (2, 9), (30, 1)]$, $[(1, 25), (2, 9), (22, 1)]$, $[(1, 11), (18, 3)]$, $[(1, 33), (2, 9), (14, 1)]$, $[(3, 17), (14, 1)]$, $[(1, 17), (4, 9), (12, 1)]$, $[(13, 5)]$, $[(1, 35), (10, 3)]$, $[(2, 19), (9, 3)]$, $[(1, 19), (2, 11), (8, 3)]$, $[(4, 11), (7, 3)]$, $[(1, 41), (2, 9), (6, 1)]$, $[(1, 25), (2, 17), (6, 1)]$, $[(1, 25), (4, 9), (4, 1)]$, $[(2, 15), (5, 7)]$, $[(1, 29), (2, 13), (2, 5)]$, $[(2, 21), (1, 13), (2, 5)]$, $[(5, 13)]$, $[(1, 43), (2, 11)]$, $[(1, 27), (2, 19)]$\\
\hline
(e) & 
$[(1, 23), (6, 7)]$, $[(2, 27), (1, 11)]$\\
\hline
Open & 
$[(1, 45), (4, 5)]$, $[(1, 59), (2, 3)]$\\
\hline
\caption{Non-pointed and non-perfect cases for rank 65. Each case is denoted as an array of ordered pairs $(n, r)$, indicating that there are $n$ components of rank $r$ in the universal grading of $\C$.}
\label{table: rank 65 cases}
\end{longtable}
\begin{longtable}[h]{ p{0.06\textwidth} | >{\raggedright\arraybackslash}p{0.84\textwidth} }
Rule & Cases \\
\hline
(a) & 
$[(1, 9), (58, 1)]$, $[(1, 17), (50, 1)]$, $[(2, 9), (49, 1)]$, $[(1, 25), (42, 1)]$, $[(1, 17), (1, 9), (41, 1)]$, $[(1, 33), (34, 1)]$, $[(1, 25), (1, 9), (33, 1)]$, $[(2, 17), (33, 1)]$, $[(1, 41), (26, 1)]$, $[(1, 33), (1, 9), (25, 1)]$, $[(1, 25), (1, 17), (25, 1)]$, $[(1, 49), (18, 1)]$, $[(1, 41), (1, 9), (17, 1)]$, $[(1, 33), (1, 17), (17, 1)]$, $[(2, 25), (17, 1)]$, $[(1, 57), (10, 1)]$, $[(1, 49), (1, 9), (9, 1)]$, $[(1, 41), (1, 17), (9, 1)]$, $[(1, 33), (1, 25), (9, 1)]$, $[(1, 65), (2, 1)]$, $[(1, 57), (1, 9), (1, 1)]$, $[(1, 49), (1, 17), (1, 1)]$, $[(1, 41), (1, 25), (1, 1)]$, $[(2, 33), (1, 1)]$\\
\hline
(b) & 
$[(4, 9), (31, 1)]$, $[(1, 17), (3, 9), (23, 1)]$, $[(1, 25), (1, 17), (1, 9), (16, 1)]$, $[(1, 25), (3, 9), (15, 1)]$, $[(2, 17), (2, 9), (15, 1)]$, $[(6, 9), (13, 1)]$, $[(1, 33), (1, 17), (1, 9), (8, 1)]$, $[(1, 33), (3, 9), (7, 1)]$, $[(1, 25), (1, 17), (2, 9), (7, 1)]$, $[(3, 17), (1, 9), (7, 1)]$, $[(1, 17), (5, 9), (5, 1)]$, $[(1, 35), (1, 11), (7, 3)]$, $[(1, 27), (1, 19), (7, 3)]$, $[(1, 19), (3, 11), (5, 3)]$, $[(1, 29), (1, 13), (5, 5)]$, $[(1, 31), (1, 15), (3, 7)]$, $[(1, 41), (1, 17), (1, 9)]$, $[(1, 33), (1, 25), (1, 9)]$\\
\hline
(c) & 
$[(3, 9), (40, 1)]$, $[(2, 17), (1, 9), (24, 1)]$, $[(5, 9), (22, 1)]$, $[(2, 25), (1, 9), (8, 1)]$, $[(2, 17), (3, 9), (6, 1)]$, $[(7, 9), (4, 1)]$\\
\hline
(d) & 
$[(1, 17), (2, 9), (32, 1)]$, $[(1, 25), (2, 9), (24, 1)]$, $[(1, 33), (2, 9), (16, 1)]$, $[(3, 17), (16, 1)]$, $[(1, 17), (4, 9), (14, 1)]$, $[(1, 19), (16, 3)]$, $[(2, 11), (15, 3)]$, $[(1, 41), (2, 9), (8, 1)]$, $[(1, 25), (2, 17), (8, 1)]$, $[(1, 25), (4, 9), (6, 1)]$, $[(1, 27), (2, 11), (6, 3)]$, $[(2, 19), (1, 11), (6, 3)]$, $[(1, 37), (6, 5)]$, $[(2, 21), (5, 5)]$, $[(1, 21), (2, 13), (4, 5)]$, $[(4, 13), (3, 5)]$, $[(1, 39), (4, 7)]$, $[(2, 23), (3, 7)]$, $[(1, 23), (2, 15), (2, 7)]$, $[(1, 33), (2, 17)]$, $[(2, 25), (1, 17)]$\\
\hline
(e) & 
$[(5, 11), (4, 3)]$, $[(4, 15), (1, 7)]$\\
\hline
Open & 
$[(1, 43), (8, 3)]$, $[(1, 49), (2, 9)]$\\
\hline
\caption{Non-pointed and non-perfect cases for rank 67. Each case is denoted as an array of ordered pairs $(n, r)$, indicating that there are $n$ components of rank $r$ in the universal grading of $\C$.}
\label{table: rank 67 cases}
\end{longtable}
\begin{longtable}[h]{ p{0.06\textwidth} | >{\raggedright\arraybackslash}p{0.84\textwidth} }
Rule & Cases \\
\hline
(a) & 
$[(1, 9), (60, 1)]$, $[(1, 17), (52, 1)]$, $[(2, 9), (51, 1)]$, $[(1, 25), (44, 1)]$, $[(1, 17), (1, 9), (43, 1)]$, $[(1, 33), (36, 1)]$, $[(1, 25), (1, 9), (35, 1)]$, $[(2, 17), (35, 1)]$, $[(1, 41), (28, 1)]$, $[(1, 33), (1, 9), (27, 1)]$, $[(1, 25), (1, 17), (27, 1)]$, $[(1, 49), (20, 1)]$, $[(1, 41), (1, 9), (19, 1)]$, $[(1, 33), (1, 17), (19, 1)]$, $[(2, 25), (19, 1)]$, $[(1, 57), (12, 1)]$, $[(1, 49), (1, 9), (11, 1)]$, $[(1, 41), (1, 17), (11, 1)]$, $[(1, 33), (1, 25), (11, 1)]$, $[(1, 65), (4, 1)]$, $[(1, 57), (1, 9), (3, 1)]$, $[(1, 49), (1, 17), (3, 1)]$, $[(1, 41), (1, 25), (3, 1)]$, $[(2, 33), (3, 1)]$\\
\hline
(b) & 
$[(4, 9), (33, 1)]$, $[(1, 17), (3, 9), (25, 1)]$, $[(1, 25), (1, 17), (1, 9), (18, 1)]$, $[(1, 25), (3, 9), (17, 1)]$, $[(2, 17), (2, 9), (17, 1)]$, $[(6, 9), (15, 1)]$, $[(1, 19), (1, 11), (13, 3)]$, $[(1, 33), (1, 17), (1, 9), (10, 1)]$, $[(1, 33), (3, 9), (9, 1)]$, $[(1, 25), (1, 17), (2, 9), (9, 1)]$, $[(3, 17), (1, 9), (9, 1)]$, $[(1, 17), (5, 9), (7, 1)]$, $[(1, 21), (1, 13), (7, 5)]$, $[(1, 43), (1, 11), (5, 3)]$, $[(1, 35), (1, 19), (5, 3)]$, $[(1, 27), (1, 19), (1, 11), (4, 3)]$, $[(1, 27), (3, 11), (3, 3)]$, $[(1, 41), (1, 17), (1, 9), (2, 1)]$, $[(1, 33), (1, 25), (1, 9), (2, 1)]$, $[(1, 41), (3, 9), (1, 1)]$, $[(1, 33), (1, 17), (2, 9), (1, 1)]$, $[(2, 25), (2, 9), (1, 1)]$, $[(1, 25), (2, 17), (1, 9), (1, 1)]$, $[(4, 17), (1, 1)]$, $[(1, 25), (1, 17), (3, 9)]$, $[(1, 47), (1, 15), (1, 7)]$, $[(1, 39), (1, 23), (1, 7)]$, $[(1, 31), (1, 23), (1, 15)]$\\
\hline
(c) & 
$[(3, 9), (42, 1)]$, $[(2, 17), (1, 9), (26, 1)]$, $[(5, 9), (24, 1)]$, $[(2, 25), (1, 9), (10, 1)]$, $[(2, 17), (3, 9), (8, 1)]$, $[(7, 9), (6, 1)]$\\
\hline
(d) & 
$[(1, 17), (2, 9), (34, 1)]$, $[(1, 25), (2, 9), (26, 1)]$, $[(23, 3)]$, $[(1, 33), (2, 9), (18, 1)]$, $[(1, 17), (4, 9), (16, 1)]$, $[(1, 41), (2, 9), (10, 1)]$, $[(1, 25), (2, 17), (10, 1)]$, $[(1, 25), (4, 9), (8, 1)]$, $[(1, 29), (8, 5)]$, $[(1, 51), (6, 3)]$, $[(2, 27), (5, 3)]$, $[(1, 35), (2, 11), (4, 3)]$, $[(3, 19), (4, 3)]$, $[(2, 19), (2, 11), (3, 3)]$, $[(1, 19), (4, 11), (2, 3)]$, $[(6, 11), (1, 3)]$, $[(1, 49), (2, 9), (2, 1)]$, $[(1, 33), (2, 17), (2, 1)]$, $[(2, 25), (1, 17), (2, 1)]$, $[(1, 33), (4, 9)]$, $[(3, 17), (2, 9)]$, $[(1, 55), (2, 7)]$, $[(2, 31), (1, 7)]$, $[(3, 23)]$\\
\hline
(e) & 
$[(3, 17), (18, 1)]$, $[(1, 27), (14, 3)]$, $[(3, 11), (12, 3)]$, $[(3, 13), (6, 5)]$\\
\hline
Open & 
$[(1, 39), (2, 15)]$\\
\hline
\caption{Non-pointed and non-perfect cases for rank 69. Each case is denoted as an array of ordered pairs $(n, r)$, indicating that there are $n$ components of rank $r$ in the universal grading of $\C$.}
\label{table: rank 69 cases}
\end{longtable}
\begin{longtable}[h]{ p{0.06\textwidth} | >{\raggedright\arraybackslash}p{0.84\textwidth} }
Rule & Cases \\
\hline
(a) & 
$[(1, 9), (62, 1)]$, $[(1, 17), (54, 1)]$, $[(2, 9), (53, 1)]$, $[(1, 25), (46, 1)]$, $[(1, 17), (1, 9), (45, 1)]$, $[(1, 33), (38, 1)]$, $[(1, 25), (1, 9), (37, 1)]$, $[(2, 17), (37, 1)]$, $[(1, 41), (30, 1)]$, $[(1, 33), (1, 9), (29, 1)]$, $[(1, 25), (1, 17), (29, 1)]$, $[(1, 49), (22, 1)]$, $[(1, 41), (1, 9), (21, 1)]$, $[(1, 33), (1, 17), (21, 1)]$, $[(2, 25), (21, 1)]$, $[(1, 57), (14, 1)]$, $[(1, 49), (1, 9), (13, 1)]$, $[(1, 41), (1, 17), (13, 1)]$, $[(1, 33), (1, 25), (13, 1)]$, $[(1, 65), (6, 1)]$, $[(1, 57), (1, 9), (5, 1)]$, $[(1, 49), (1, 17), (5, 1)]$, $[(1, 41), (1, 25), (5, 1)]$, $[(2, 33), (5, 1)]$\\
\hline
(b) & 
$[(4, 9), (35, 1)]$, $[(1, 17), (3, 9), (27, 1)]$, $[(1, 25), (1, 17), (1, 9), (20, 1)]$, $[(1, 25), (3, 9), (19, 1)]$, $[(2, 17), (2, 9), (19, 1)]$, $[(6, 9), (17, 1)]$, $[(1, 33), (1, 17), (1, 9), (12, 1)]$, $[(1, 33), (3, 9), (11, 1)]$, $[(1, 25), (1, 17), (2, 9), (11, 1)]$, $[(3, 17), (1, 9), (11, 1)]$, $[(1, 17), (5, 9), (9, 1)]$, $[(1, 27), (1, 11), (11, 3)]$, $[(1, 41), (1, 17), (1, 9), (4, 1)]$, $[(1, 33), (1, 25), (1, 9), (4, 1)]$, $[(1, 41), (3, 9), (3, 1)]$, $[(1, 33), (1, 17), (2, 9), (3, 1)]$, $[(2, 25), (2, 9), (3, 1)]$, $[(1, 25), (2, 17), (1, 9), (3, 1)]$, $[(4, 17), (3, 1)]$, $[(1, 25), (1, 17), (3, 9), (2, 1)]$, $[(1, 25), (5, 9), (1, 1)]$, $[(2, 17), (4, 9), (1, 1)]$, $[(1, 51), (1, 11), (3, 3)]$, $[(1, 43), (1, 19), (3, 3)]$, $[(1, 35), (1, 27), (3, 3)]$, $[(1, 35), (1, 19), (1, 11), (2, 3)]$, $[(1, 35), (3, 11), (1, 3)]$, $[(1, 27), (1, 19), (2, 11), (1, 3)]$, $[(3, 19), (1, 11), (1, 3)]$, $[(1, 53), (1, 13), (1, 5)]$, $[(1, 45), (1, 21), (1, 5)]$, $[(1, 37), (1, 29), (1, 5)]$, $[(1, 37), (1, 21), (1, 13)]$\\
\hline
(c) & 
$[(3, 9), (44, 1)]$, $[(2, 17), (1, 9), (28, 1)]$, $[(5, 9), (26, 1)]$, $[(2, 25), (1, 9), (12, 1)]$, $[(2, 17), (3, 9), (10, 1)]$, $[(7, 9), (8, 1)]$\\
\hline
(d) & 
$[(1, 17), (2, 9), (36, 1)]$, $[(1, 25), (2, 9), (28, 1)]$, $[(1, 33), (2, 9), (20, 1)]$, $[(3, 17), (20, 1)]$, $[(1, 17), (4, 9), (18, 1)]$, $[(1, 11), (20, 3)]$, $[(1, 41), (2, 9), (12, 1)]$, $[(1, 25), (2, 17), (12, 1)]$, $[(1, 25), (4, 9), (10, 1)]$, $[(1, 35), (12, 3)]$, $[(2, 19), (11, 3)]$, $[(1, 19), (2, 11), (10, 3)]$, $[(4, 11), (9, 3)]$, $[(1, 21), (10, 5)]$, $[(2, 13), (9, 5)]$, $[(1, 15), (8, 7)]$, $[(1, 49), (2, 9), (4, 1)]$, $[(1, 33), (2, 17), (4, 1)]$, $[(2, 25), (1, 17), (4, 1)]$, $[(1, 33), (4, 9), (2, 1)]$, $[(3, 17), (2, 9), (2, 1)]$, $[(1, 17), (6, 9)]$, $[(1, 59), (4, 3)]$, $[(1, 43), (2, 11), (2, 3)]$, $[(2, 27), (1, 11), (2, 3)]$, $[(1, 27), (2, 19), (2, 3)]$, $[(1, 27), (4, 11)]$, $[(2, 19), (3, 11)]$, $[(1, 61), (2, 5)]$, $[(1, 45), (2, 13)]$, $[(2, 29), (1, 13)]$\\
\hline
(e) & 
None\\
\hline
Open & 
$[(1, 29), (2, 21)]$\\
\hline
\caption{Non-pointed and non-perfect cases for rank 71. Each case is denoted as an array of ordered pairs $(n, r)$, indicating that there are $n$ components of rank $r$ in the universal grading of $\C$.}
\label{table: rank 71 cases}
\end{longtable}
\begin{longtable}[h]{ p{0.06\textwidth} | >{\raggedright\arraybackslash}p{0.84\textwidth} }
Rule & Cases \\
\hline
(a) & 
$[(1, 9), (64, 1)]$, $[(1, 17), (56, 1)]$, $[(2, 9), (55, 1)]$, $[(1, 25), (48, 1)]$, $[(1, 17), (1, 9), (47, 1)]$, $[(1, 33), (40, 1)]$, $[(1, 25), (1, 9), (39, 1)]$, $[(2, 17), (39, 1)]$, $[(1, 41), (32, 1)]$, $[(1, 33), (1, 9), (31, 1)]$, $[(1, 25), (1, 17), (31, 1)]$, $[(1, 49), (24, 1)]$, $[(1, 41), (1, 9), (23, 1)]$, $[(1, 33), (1, 17), (23, 1)]$, $[(2, 25), (23, 1)]$, $[(1, 57), (16, 1)]$, $[(1, 49), (1, 9), (15, 1)]$, $[(1, 41), (1, 17), (15, 1)]$, $[(1, 33), (1, 25), (15, 1)]$, $[(1, 65), (8, 1)]$, $[(1, 57), (1, 9), (7, 1)]$, $[(1, 49), (1, 17), (7, 1)]$, $[(1, 41), (1, 25), (7, 1)]$, $[(2, 33), (7, 1)]$\\
\hline
(b) & 
$[(4, 9), (37, 1)]$, $[(1, 17), (3, 9), (29, 1)]$, $[(1, 25), (1, 17), (1, 9), (22, 1)]$, $[(1, 25), (3, 9), (21, 1)]$, $[(2, 17), (2, 9), (21, 1)]$, $[(6, 9), (19, 1)]$, $[(1, 33), (1, 17), (1, 9), (14, 1)]$, $[(1, 33), (3, 9), (13, 1)]$, $[(1, 25), (1, 17), (2, 9), (13, 1)]$, $[(3, 17), (1, 9), (13, 1)]$, $[(1, 17), (5, 9), (11, 1)]$, $[(1, 35), (1, 11), (9, 3)]$, $[(1, 27), (1, 19), (9, 3)]$, $[(1, 19), (3, 11), (7, 3)]$, $[(1, 41), (1, 17), (1, 9), (6, 1)]$, $[(1, 33), (1, 25), (1, 9), (6, 1)]$, $[(1, 41), (3, 9), (5, 1)]$, $[(1, 33), (1, 17), (2, 9), (5, 1)]$, $[(2, 25), (2, 9), (5, 1)]$, $[(1, 25), (2, 17), (1, 9), (5, 1)]$, $[(4, 17), (5, 1)]$, $[(1, 25), (1, 17), (3, 9), (4, 1)]$, $[(1, 25), (5, 9), (3, 1)]$, $[(2, 17), (4, 9), (3, 1)]$, $[(8, 9), (1, 1)]$, $[(1, 23), (1, 15), (5, 7)]$, $[(1, 45), (1, 13), (3, 5)]$, $[(1, 37), (1, 21), (3, 5)]$, $[(1, 29), (1, 21), (1, 13), (2, 5)]$, $[(1, 29), (3, 13), (1, 5)]$, $[(1, 59), (1, 11), (1, 3)]$, $[(1, 51), (1, 19), (1, 3)]$, $[(1, 43), (1, 27), (1, 3)]$, $[(1, 43), (1, 19), (1, 11)]$, $[(1, 35), (1, 27), (1, 11)]$\\
\hline
(c) & 
$[(3, 9), (46, 1)]$, $[(2, 17), (1, 9), (30, 1)]$, $[(5, 9), (28, 1)]$, $[(2, 25), (1, 9), (14, 1)]$, $[(2, 17), (3, 9), (12, 1)]$, $[(7, 9), (10, 1)]$\\
\hline
(d) & 
$[(1, 17), (2, 9), (38, 1)]$, $[(1, 25), (2, 9), (30, 1)]$, $[(1, 33), (2, 9), (22, 1)]$, $[(3, 17), (22, 1)]$, $[(1, 17), (4, 9), (20, 1)]$, $[(1, 19), (18, 3)]$, $[(2, 11), (17, 3)]$, $[(1, 41), (2, 9), (14, 1)]$, $[(1, 25), (2, 17), (14, 1)]$, $[(1, 25), (4, 9), (12, 1)]$, $[(1, 13), (12, 5)]$, $[(1, 43), (10, 3)]$, $[(1, 27), (2, 11), (8, 3)]$, $[(2, 19), (1, 11), (8, 3)]$, $[(5, 11), (6, 3)]$, $[(1, 49), (2, 9), (6, 1)]$, $[(1, 33), (2, 17), (6, 1)]$, $[(2, 25), (1, 17), (6, 1)]$, $[(1, 33), (4, 9), (4, 1)]$, $[(1, 17), (6, 9), (2, 1)]$, $[(3, 15), (4, 7)]$, $[(2, 29), (3, 5)]$, $[(1, 37), (2, 13), (2, 5)]$, $[(3, 21), (2, 5)]$, $[(2, 21), (2, 13), (1, 5)]$, $[(1, 21), (4, 13)]$, $[(2, 35), (1, 3)]$, $[(1, 51), (2, 11)]$, $[(1, 35), (2, 19)]$\\
\hline
(e) & 
$[(3, 17), (2, 9), (4, 1)]$, $[(2, 27), (1, 19)]$\\
\hline
Open & 
$[(1, 31), (6, 7)]$, $[(1, 53), (4, 5)]$, $[(1, 67), (2, 3)]$\\
\hline
\caption{Non-pointed and non-perfect cases for rank 73. Each case is denoted as an array of ordered pairs $(n, r)$, indicating that there are $n$ components of rank $r$ in the universal grading of $\C$.}
\label{table: rank 73 cases}
\end{longtable}

\end{appendices}
\end{document}